\titleformat*{\section}{\large\bfseries}
\titleformat*{\subsection}{\bfseries}
\titlespacing*{\section}{0pt}{0.1ex}{0.1ex}
\titlespacing*{\subsection}{0pt}{.1ex}{0.1ex}
\newtheorem{theorem}{Theorem}
\newtheorem{lemma}{Lemma}
\newtheorem{proposition}{Proposition}
\newtheorem{assumption}{Assumption}
\newtheorem{corollary}{Corollary}
\newcommand{\EE}{\mathbb{E}}
\newcommand{\R}{\mathbb{R}}
\newcommand{\vdot}[2]{\left\langle #1, #2 \right\rangle}
\DeclareMathOperator*{\minimize}{minimize}
\DeclareMathOperator*{\argmin}{arg\,min}
\DeclareMathOperator{\tr}{tr}
\newcommand{\bbe}{\mathbb{E}}
\newcommand{\bbr}{\mathbb{R}}
\def\w{\omega}
\def\eqnok#1{(\ref{#1})}
\def\vgap{\vspace*{.1in}}
\newcommand{\beq}{\begin{equation}}
\newcommand{\eeq}{\end{equation}}
\newcommand{\beqa}{\begin{eqnarray}}
\newcommand{\eeqa}{\end{eqnarray}}
\newcommand{\beqas}{\begin{eqnarray*}}
\newcommand{\eeqas}{\end{eqnarray*}}
\newcommand{\bi}{\begin{itemize}}
\newcommand{\ei}{\end{itemize}}
\newcommand{\ba}{\begin{array}}
\newcommand{\ea}{\end{array}}
\newcommand{\nn}{\nonumber}
\tikzset{
  snake it/.style={-stealth,
    decoration={snake, amplitude=1.5mm, segment length=2.5mm, post length=2.9mm},
    decorate}
}
\title{Optimal Zeroth-Order Bilevel Optimization\thanks{The authors have contributed equally to this work.
}}
\author{Alireza Aghasi\thanks{Department of Electrical Engineering \& Computer Science, Oregon State University, Email: 
  \texttt{alireza.aghasi@oregonstate.edu}.}
\and Jeongyeol Kwon\thanks{Wisconsin Institute for Discovery, University of Wisconsin-Madison, Email: \texttt{sortingkwon@gmail.com}.}
\and Saeed Ghadimi\thanks{Department of Management Science \& Engineering, University of Waterloo, Email: \texttt{sghadimi@uwaterloo.ca}.}
}
\begin{document}
\maketitle

\begin{abstract}
		In this paper, we develop zeroth-order algorithms with provably (nearly) optimal sample complexity for stochastic bilevel optimization, where only noisy function evaluations are available. We propose two distinct algorithms: the first is inspired by Jacobian/Hessian-based approaches, and the second builds on using a penalty function reformulation. The Jacobian/Hessian-based method achieves a sample complexity of $\mathcal{O}(d^3/\epsilon^2)$, which is optimal in terms of accuracy $\epsilon$, albeit with polynomial dependence on the problem dimension $d$. In contrast, the penalty-based method sharpens this guarantee to $\mathcal{O}(d/\epsilon^2)$, optimally reducing the dimension dependence to linear while preserving optimal accuracy scaling. Our analysis is built upon Gaussian smoothing techniques, and we rigorously establish their validity under the stochastic bilevel settings considered in the existing literature. To the best of our knowledge, this is the first work to provide provably optimal sample complexity guarantees for a zeroth-order stochastic approximation method in bilevel optimization.
\end{abstract}

\paragraph{Keywords.}
Bilevel Programming, Stochastic Optimization, Gaussian Smoothing, Zeroth-Order Optimization

\section{Introduction}\label{sec:intro}
We study the stochastic bilevel programming (BLP) problem of the form
	\begin{align} \label{main_prob_st}
		&\minimize_{x \; \in \; X\subseteq \mathbb{R}^n} \left\{\psi(x):= f(x,y^*(x))=\bbe_\xi [F(x,y^*(x),\xi)]\right\} \nn \\
		& \text{subject to:} \ \  y^*(x) = \argmin_{y \in \bbr^m} \left\{g(x,y)= \bbe_\zeta [G(x,y,\zeta)]\right\},
	\end{align}
	where \( f \) and \( g \) are continuously differentiable over \((x,y) \in X \times \mathbb{R}^m\), \(X \) is a closed convex set, and \(\xi\) and \(\zeta\) are independent random vectors, whose distributions reside in subspaces of possibly different dimensions.
	We focus on a challenging noisy, derivative-free setting in which unbiased stochastic gradients of \(f\) and \(g\) are unavailable, and the only accessible information comes from noisy oracle evaluations of \(f\) and \(g\).
	
	Bilevel programming (BLP) is a fundamental problem in engineering and economics arising in many applications including decision making \cite{lu2016multilevel}, game theory \cite{yue2017stackelberg}, and optimal design \cite{herskovits2000contact}. Recently, BLP has been widely adopted in many machine learning and artificial intelligence problems that involve hierarchical two-level structures. A prominent example is meta-learning \cite{franceschi2018bilevel,hospedales2021meta}, in which knowledge from prior tasks accelerates and automates learning on new tasks. Effective meta-learning models generalize to unseen tasks and require significantly fewer training examples (an ability often referred to as learning to learn \cite{hospedales2021meta}). Beyond meta-learning, BLP is central to model selection, where it identifies parameters by minimizing generalization error estimates \cite{giovannelli2021bilevel}, and to hyperparameter optimization, which enables simultaneous model training and optimal parameter tuning \cite{franceschi2018bilevel}. In automated machine learning (AutoML) \cite{hutter2019automated}, BLP often follows a leader–follower format where the follower takes the optimal action that maximally benefits the leader’s objective value. In generative adversarial networks (GANs), originally formulated as a zero-sum, min-max game, a bilevel formulation can extend the problem to a non-saturating, non-zero-sum setting, yielding more stable numerical behavior and stronger gradients during early training \cite{gidelvariational}.
	
	Other applications include continual learning \cite{borsos2020coresets}, where shared parameters facilitate adaptation without forgetting previous data patterns; actor–critic methods in reinforcement learning \cite{konda1999actor}; multi-agent coordination \cite{zhang2020bi}; and neural architecture search, where differentiable BLP approaches achieve high performance and efficiency \cite{liu2018darts}. Developing scalable solution strategies for BLP can thus advance major areas such as meta-learning and AutoML, while also providing foundational tools for more complex settings involving multiple lower-level subproblems.
	
	Solving bilevel programs is computationally challenging, particularly in large-scale machine learning applications. Gradient-based methods require computing or estimating the hypergradient of $\psi$ at a given point $x$, which under some regularity conditions is given by \cite{GhadWang18}:
	\begin{equation}\label{grad_f}
		\nabla \psi(x) = \nabla_x f\big(x, y^*(x)\big) 
		- \nabla_{xy}^2 g\big(x, y^*(x)\big) 
		\left[ \nabla_{yy}^2 g\big(x, y^*(x)\big) \right]^{-1} 
		\nabla_y f\big(x, y^*(x)\big).
	\end{equation}
	Two general difficulties arise in computing \eqref{grad_f}. First, for each $x$, one must solve the lower-level problem to obtain $y^*(x)$, often requiring multiple updates of $y$ before a single update of $x$. Second, the formula involves second-order derivatives of $g$, necessitating access to, or accurate estimation of, Jacobians and inverse Hessians. Existing methods typically rely on explicit second-order information and focus on efficient stochastic estimation under noise \cite{ji2021bilevel,chen2022single}.
	
	In this work, we focus on scalable zeroth-order methods for solving \eqref{main_prob_st}, relying solely on noisy oracle evaluations of \(f\) and \(g\). Such methods are well suited to optimization problems, including BLP, where gradients are unavailable or impractical to compute due to model complexity, lack of analytical access, or computational constraints. In these cases, gradients are typically approximated via deterministic or randomized finite-difference schemes \cite{shi2021numerical,nesterov2017random}. 
	
	Zeroth-order approaches have found broad application across diverse engineering domains \cite{aghasi2013geometric}. In the realm of machine learning, zeroth-order  methods become essential when gradients are inaccessible due to privacy, model complexity, or computational constraints \cite{liu2020primer}. These methods have been considered in adversarial machine learning for black-box attacks \cite{ilyas2018black}, where adversaries craft adversarial examples via query–response interactions without internal model access, as well as for defending and robustifying such models \cite{zhang2021robustify}. These methods also enhance interpretability by explaining black-box predictions using only output probabilities \cite{dhurandhar2019model}, and are employed in meta-learning to reduce query costs \cite{du2019query}, learn optimizers \cite{ruan2019learning}, and adapt meta-learners through finite-difference or Gaussian smoothing techniques \cite{nesterov2017random}. 
	
	\paragraph{Previous BLP Methods and the Contributions of this Work} Several general techniques address the computational challenges of bilevel programming (BLP). A common approach reduces the BLP to a single-level problem by replacing the lower-level problem with its optimality conditions (e.g., \cite{Hansen1992NewBR}). For large-scale lower problems, however, this often results in an excessive number of constraints and additional computational burdens. Alternatively, iterative algorithms directly solve the BLP, including penalty methods (\cite{Aiyoshi1980HIERARCHICALDS,Case1998AnLP}), descent methods (\cite{Falk1995OnBP,Kolstad1990DerivativeEA}), and trust-region methods with mixed-integer subproblems (\cite{Colson2005ATM}). Most existing works on bilevel optimization develop (stochastic) methods requiring second-order derivatives of the inner function $g$ (e.g., \cite{chen2023optimal,GhadWang18,ji2021provably}), though more recent efforts have introduced fully (stochastic) first-order methods that only require first derivatives of both upper- and lower-level objectives (\cite{Ye2022BOMEBO,kwon2023fully}).
	
	The computational challenges of solving BLPs are further compounded when only noisy function queries are available and derivatives of $f$ and $g$ are inaccessible. Literature on derivative-free bilevel optimization is scarce and, to our knowledge, suboptimal in terms of scalability and accuracy. For example, \cite{conn2012bilevel,ehrhardt2021inexact} address a quadratic belief model, but their methods apply mainly to deterministic (non-stochastic) settings and are not scalable. A classical zeroth-order approach approximates functions via convolution with a probability density (e.g., uniform over a ball or Gaussian), whose gradient can be computed from function values, enabling biased gradient estimation for $f$ and $g$ in \eqnok{main_prob_st} without explicit derivatives. In \cite{Gu-etal2021}, Gaussian convolution is applied to the upper-level function, assuming the lower-level problem is solved efficiently via methods such as (proximal) gradient descent; however, no convergence analysis is provided. More recently, Jacobian/Hessian-free methods have been proposed that avoid second-order information on $g$ but still require first-order derivatives of both $f$ and $g$ (e.g., \cite{sow2022convergence,yang2023achieving}).
	
	Finally, \cite{AghaGhad25} introduces the first fully zeroth-order stochastic approximation framework for BLPs, requiring no unbiased first- or second-order derivatives for either level. The authors provide non-asymptotic convergence analysis of the proposed algorithm under different convexity assumptions on $\psi(x)$, though these remain suboptimal with respect to dimension and accuracy. In particular, they show that when $\psi(x)$ is non-convex, their proposed method possesses a sample complexity of ${\cal O}\left(\frac{(n+m)^4}{\epsilon^3 }\log \left(\frac{n+m}{\epsilon}\right)\right)$ to find an $\epsilon$-stationary point of the BLP in \eqref{main_prob_st} (a point $\bar x \in \mathbb{R}^n$ such that $\bbe[\|\nabla \psi(\bar x)\|^2] \le \epsilon$ assuming $X \equiv \mathbb{R}^n$ for simplicity).
	
	Our contribution in this paper is focused on improving the available complexity bounds for zeroth-order BLP problems. First, we modify the algorithm presented in \cite{AghaGhad25} by taking mini-batch of samples to estimate derivatives of the objective functions using their noisy evaluations and different choice of parameters. We then show that our modified algorithm can find an $\epsilon$-stationary point of the BLP with at most ${\cal O} \left(\frac{m(m+n)^2 \log(1/\epsilon)}{\epsilon^2 } \right)$ number of evaluations of $f$ and $g$ which is significantly better than the one in \cite{AghaGhad25} in terms of both problem dimension and target accuracy. This complexity bound is indeed optimal in terms of dependence on $\epsilon$ (up to a logarithmic factor) while still cubically depends on the problem dimension. This dependency seems unimprovable when we are estimating second-order derivatives with noisy function evaluations. As our second contribution, motivated by \cite{kwon2024complexity}, we provide another fully zeroth-order algorithm for solving BLP in which we don't need to estimate Hessian or its inverse, and we only estimate first-order derivatives of both objective function. We establish the finite-time convergence analysis of this algorithm and show that the aforementioned sample complexity can be improved to ${\cal O} \left(\frac{m+n}{\epsilon^2 } \right)$. To the best of our knowledge this is the first time that such an optimal sample complexity in terms of dependence on both problem dimension and target accuracy is provided in the literature for fully zeroth-order stochastic optimization
	algorithms for solving BLP. 
	
	The remainder of the paper is organized as follows. We conclude this section by providing our main assumptions and review of zeroth-order estimates. We then provide our nearly optimal algorithm for solving the BLP in Section~\ref{near_opt} followed by our optimal method in Section~\ref{full_opt}. Finally, we conduct some numerical experiments in Section~\ref{numeric} to show practical performance of the proposed methods.

	\subsection{Main Assumptions}
	Throughout the paper, we make different assumptions which are common in the literature of BLP.
	
	\begin{assumption}\label{fg_assumption}
		The following statements hold for the functions $f$ and $g$.
		\begin{itemize}
			\item [a)] The function $f$ and its gradient are Lipschitz continuous. 
			
			\item [b)] The function $g$ is twice differentiable in $(x,y)$ with Lipschitz continuous gradient and Hessian. Moreover, for any fixed $ x \in X$, $g(x,\cdot)$ is strongly convex.
		\end{itemize}
	\end{assumption}
	Since we are dealing with zeroth-order estimates, we also need to make the following assumptions about the stochastic estimators of $f$ and $g$.
	\begin{assumption}\label{stochastic_assumption}
		The stochastic functions $F(x,y,\xi)$ and $G(x,y,\zeta)$ are respectively differentiable and twice differentiable in $(x,y) \in \bbr^{n \times m}$, and Assumption~\ref{fg_assumption} statements hold for them. Moreover, $\nabla_x F(x,y,\xi)$, $\nabla_y F(x,y,\xi)$, $\nabla_y G(x,y,\zeta)$, $\nabla^2_{xy} G(x,y,\zeta)$, and $\nabla^2_{yy} G(x,y,\zeta)$ are unbiased estimators with bounded variance for the true gradients and Hessian of $f$ and $g$.
	\end{assumption}
	We also need the following boundedness assumption to establish the convergence analysis of our proposed method.
	\begin{assumption}\label{bnd_yx}
		The optimal solution to the lower problem is bounded, in other words, $\max_{x \in X} \|y^*(x)\|$ is bounded.
	\end{assumption}
	
	When only dealing with estimating first-order derivatives, we also need the following assumption while relaxing the the above boundedness assumption.
	\begin{assumption}
		\label{assumption:fourth_moment}
		The $4^{th}$-order moment of stochastic partial derivatives of $f$ w.r.t $y$, $\nabla_y F (x,y; \zeta)$, is bounded.  
	\end{assumption}

\subsection{Review of Zeroth-order Approximations}

	In this subsection, we review some tools and concepts from zeroth-order approximation. We first start with the Stein's identity theorem \cite{stein1972bound,stein1981estimation} which is foundation for our zeroth-order approximations.
	\begin{theorem}\label{thStein}
		Let $u\sim\mathcal{N}(0,I_d)$, be a standard Gaussian random vector, and let $q:\mathbb{R}^d\to\mathbb{R}$, be an almost-differentiable function with $\mathbb{E}[\|\nabla q\|]<\infty$, then $\mathbb{E}[u~\!q(u)] = \mathbb{E}[\nabla q(u)]$.
		Furthermore, when the function $q$ has a twice continuously differentiable Hessian, we have $\mathbb{E}\left[\left( u u^\top - I_d\right)q(u) \right]  = \mathbb{E}\left[\nabla^2 q(u)\right].$
	\end{theorem}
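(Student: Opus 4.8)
The plan is to prove the first-order identity by a componentwise integration by parts against the Gaussian density, and then derive the second-order identity either by a second integration by parts or, more slickly, by applying the first-order identity to the gradient of $q$. Write $\varphi(u) = (2\pi)^{-d/2}\exp(-\|u\|^2/2)$ for the standard Gaussian density on $\mathbb{R}^d$, and note the key elementary fact $\nabla \varphi(u) = -u\,\varphi(u)$, equivalently $u_i\,\varphi(u) = -\partial_{i}\varphi(u)$ for each coordinate $i$.

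For the first identity, I would fix a coordinate $i$ and compute
\begin{myequation*}
\mathbb{E}[u_i\, q(u)] = \int_{\mathbb{R}^d} u_i\, q(u)\,\varphi(u)\, du = -\int_{\mathbb{R}^d} q(u)\,\partial_i \varphi(u)\, du = \int_{\mathbb{R}^d} \partial_i q(u)\,\varphi(u)\, du = \mathbb{E}[\partial_i q(u)],
\end{myequation*}
where the middle step is integration by parts in the $u_i$ variable (integrating over $u_i \in \mathbb{R}$ first, with the other coordinates held fixed, then over the rest). Stacking the $d$ coordinates gives $\mathbb{E}[u\,q(u)] = \mathbb{E}[\nabla q(u)]$. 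The regularity hypotheses — $q$ almost-differentiable and $\mathbb{E}[\|\nabla q\|] < \infty$ — are exactly what is needed to justify that the boundary terms at $\pm\infty$ vanish (the Gaussian tail kills $q\,\varphi$ provided $q$ does not grow too fast, which is implied by the gradient integrability together with almost-differentiability, as in the absolutely-continuous-on-lines characterization used by Stein) and that Fubini applies so the one-dimensional integrations by parts can be assembled. I would either cite \cite{stein1981estimation} for the precise measure-theoretic version of this step or spell out the truncation argument (integrate by parts on $[-R,R]$, let $R\to\infty$, dominated convergence).

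For the second identity, the cleanest route is to apply the already-established first-order identity to the vector-valued map $u \mapsto q(u)\, e_j$ componentwise, or directly: for each pair $(i,j)$, apply the scalar first-order identity with $q$ replaced by $\partial_j q$ to get $\mathbb{E}[u_i\, \partial_j q(u)] = \mathbb{E}[\partial_i \partial_j q(u)]$. Separately, a direct integration by parts gives $\mathbb{E}[(u_i u_j - \delta_{ij})\, q(u)] = \mathbb{E}[u_i\, \partial_j q(u)]$ — indeed $\int (u_i u_j - \delta_{ij}) q\,\varphi\,du = \int \partial_j q \cdot (-\partial_i \varphi)\, du$ after recognizing $(u_i u_j - \delta_{ij})\varphi = -\partial_i(u_j \varphi)$ and integrating by parts once more. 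Chaining the two gives $\mathbb{E}[(u_iu_j - \delta_{ij})q(u)] = \mathbb{E}[\partial_i\partial_j q(u)]$, and assembling over $i,j$ yields $\mathbb{E}[(uu^\top - I_d)q(u)] = \mathbb{E}[\nabla^2 q(u)]$.

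The main obstacle is purely the justification of the integration by parts — i.e., controlling the growth of $q$ at infinity so that boundary terms vanish and the interchange of integration and differentiation is legitimate under the stated "almost-differentiable" hypothesis rather than a $C^1$ one. The algebra with the Hermite-type identities $u_i\varphi = -\partial_i\varphi$ and $(u_iu_j-\delta_{ij})\varphi = -\partial_i(u_j\varphi)$ is routine; everything hard is hidden in the hypothesis bookkeeping, which is why I would lean on the standard references \cite{stein1972bound,stein1981estimation} for that technical core and present the computation above as the substance.
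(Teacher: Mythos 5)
Your proposal is correct, but note that the paper itself offers no proof of this theorem: it is stated as a classical result and attributed directly to \cite{stein1972bound,stein1981estimation}, so there is nothing internal to compare against. What you have written is the standard integration-by-parts derivation — the identity $u_i\varphi(u)=-\partial_i\varphi(u)$ for the Gaussian density, a one-dimensional integration by parts per coordinate for the first-order identity, and the Hermite-type identity $(u_iu_j-\delta_{ij})\varphi=-\partial_i(u_j\varphi)$ combined with a second application of the first-order identity for the Hessian version — and the algebra checks out (the index asymmetry in $\mathbb{E}[u_i\partial_j q]$ versus $\mathbb{E}[u_j\partial_i q]$ is harmless since the Hessian is symmetric under the stated smoothness hypothesis). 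You are also right that the only genuinely delicate point is justifying the vanishing of boundary terms under the ``almost-differentiable'' hypothesis rather than a $C^1$ one, and deferring that measure-theoretic bookkeeping to Stein's original papers is exactly what the authors do implicitly by citing them. In short: the proposal supplies, correctly, the standard argument that the paper omits by citation.
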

	Following the above theorem, and the block structure $z = (x,y)$ in BLP, one can consider a smooth approximation to $q(x,y)$ as
	\begin{equation}
		q_{\eta,\mu}(x,y) \triangleq \mathbb{E}_{u,v} [q(x+\eta u, y+\mu v)],
	\end{equation}
	where $u\sim\mathcal{N}(0,I_n)$, $v\sim\mathcal{N}(0,I_m)$, and $\eta\geq 0$ and $\mu\geq 0$ denote the level of smoothness along the $x$ and $y$ directions, respectively (see \cite{nesterov2017random,AghaGhad25} for the properties of the smoothed function and its gradient with one and two blocks of variables, respectively).
	
	Denoting, $q_{\eta,\mu}(x,y) \triangleq \mathbb{E}_{u,v,\zeta} [Q(x+\eta u, y+\mu v,\zeta)$, where $\zeta$ is a random vector independent of the Gaussian random vectors $(u,v)$, we can introduce zeroth-order approximation of derivatives of $q$ as follows.
	\begin{equation}\label{stoch:gradx_minibatch}
	    \tilde \nabla_x Q_{\eta,\mu}(x,y,\{\zeta_i\}_{i=1}^N)  =  \frac{1}{N}\sum_{i=1}^N \frac{Q(x+\eta u_i, y+\mu v_i,\zeta_i) - Q(x, y,\zeta_i)}{\eta} \ u_i,
	\end{equation}
    \begin{equation}\label{stoch:grady_minibatch}
        \tilde \nabla_y Q_{\eta,\mu}(x,y,\{\zeta_i\}_{i=1}^N)  = \frac{1}{N} \sum_{i=1}^N \frac{Q(x+\eta u_i, y+\mu v_i, \zeta_i) - Q(x, y,\zeta_i)}{\mu} \ v_i,
    \end{equation}
	\begin{align}\notag 
		\tilde \nabla_{xy}^2 &Q_{\eta,\mu}(x,y,\{\zeta_i\}_{i=1}^N)=\\ \label{stochHessxy_minibatch}  & \frac{1}{N}\sum_{i=1}^N u_i v_i^\top  \frac{Q(x+\eta u_i, y+\mu v_i,\zeta_i) + Q(x-\eta u_i,y-\mu v_i,\zeta_i)-2Q(x,y,\zeta_i )}{2\eta\mu},
	\end{align}
    \begin{align}\notag 
		\tilde \nabla_{xx}^2 &Q_{\eta,\mu}(x,y,\{\zeta_i\}_{i=1}^N)=\\ \label{stochHessxx_minibatch} &\! \frac{1}{N}\!\sum_{i=1}^N (u_i u_i^\top \!\! -\! I)\frac{Q(x+\eta u_i, y+\mu v_i,\zeta_i) + Q(x-\eta u_i,y-\mu v_i,\zeta_i)-2Q(x,y,\zeta_i )}{2\eta^2}.
	\end{align}
	In the above estimates, we use a batch of size $N$ to reduce the error associated with estimating the derivatives. Below, we provide upper bounds on the above first- and second-order derivative estimates.
	\begin{proposition}\label{propNestApprox_stch}
		Consider $x\in\mathbb{R}^n$ and $y\in \mathbb{R}^m$. For a given function $q: \mathbb{R}^{n+m}\to \mathbb{R}$, and positive scalars $\eta$ and $\mu$.
		\begin{itemize}
			\item[(a)]If $q\in \mathcal{C}^1(\mathbb{R}^{n+m};L_{1,q})$, we have
			\begin{align*}
				\EE &\|\tilde \nabla_x Q_{\eta,\mu}(x,y,\{\zeta_i\}_{i=1}^N) - \nabla_x q_{\eta,\mu}(x,y)\|^2 \le \EE \|\tilde \nabla_x Q_{\eta,\mu}(x,y,\{\zeta_i\}_{i=1}^N) \|^2\\ &\le \frac{L_{1,Q}^2}{N}\left(\eta^2(n+6)^3+\frac{\mu^4}{\eta^2}n(m+4)^2 \right) +  \frac{4(n+2)}{N}\left(\sigma_{1,Q}^2+ \|\nabla_x  q(x, y)\|^2\right) \\& ~~~+  \frac{4\mu^2}{\eta^2N}n\left(\sigma_{1,Q}^2+\|\nabla_y  q(x, y)\|^2\right),
			\end{align*}
			and
			\begin{align*}
				\EE &\|\tilde \nabla_y Q_{\eta,\mu}(x,y,\{\zeta_i\}_{i=1}^N) - \nabla_y q_{\eta,\mu}(x,y)\|^2 \le \EE \|\tilde \nabla_y Q_{\eta,\mu}(x,y,\{\zeta_i\}_{i=1}^N) \|^2\\ &\le \frac{L_{1,Q}^2}{N}\left( \mu^2(m+6)^3+\frac{\eta^4}{\mu^2}m(n+4)^2  \right) +  \frac{4\eta^2}{\mu^2 N}m\left(\sigma_{1,Q}^2+ \|\nabla_x  q(x, y)\|^2\right) \\&~~~+  \frac{4(m+2)}{N}\left(\sigma_{1,Q}^2+\|\nabla_y  q(x, y)\|^2\right),
			\end{align*}
			where $\tilde \nabla_x Q_{\eta,\mu}(x,y,\{\zeta_i\}_{i=1}^N)$ and $\tilde \nabla_y Q_{\eta,\mu}(x,y,\{\zeta_i\}_{i=1}^N)$ are defined in \eqref{stoch:gradx_minibatch} and \eqref{stoch:grady_minibatch}, respectively.
			
			\item [(b)] If $q\in \mathcal{C}^2(\mathbb{R}^{n+m};L_{2,q})$, then
			\begin{align*}
				\EE&_{\{u_i,v_i,\zeta_i\}_{i=1}^N\mid \theta }  \|\tilde \nabla^2_{xy} Q_{\eta,\mu}(x,y,\{\zeta_i\}_{i=1}^N)\theta - \nabla_{xy} q_{\eta,\mu}(x,y)\theta\|^2  \\&\leq  ~\frac{8L_{2,Q}^2}{N}\left[ \frac{\eta^4}{\mu^2}(n+8)^4 + \frac{2\mu^4}{\eta^2} n(m+12)^3\right]\|\theta\|^2\\ \notag &~~~+\frac{1}{N} \bigg[ \frac{6\eta^2}{\mu^2}(n+4)(n+2)\left(\sigma_{2,Q}^2 + \|\nabla^2_{xx} q(x,y)\|_F^2\right) \\&~~~~~~~~~~~ + 36(n+2)\left(\sigma_{2,Q}^2 + \|\nabla^2_{xy}q(x,y)\|_F^2\right) \\& ~~~~~~~~~~~ + \frac{30\mu^2}{\eta^2}n(m+2)\left(\sigma_{2,Q}^2 + \|\nabla^2_{yy}q(x,y )\|_F^2\right)\bigg]\|\theta\|^2,
			\end{align*}
			where $\tilde \nabla^2_{xy} Q_{\eta,\mu}(x,y,\{\zeta_i\}_{i=1}^N)$ is defined in \eqref{stochHessxy_minibatch}.
		\end{itemize}
	\end{proposition}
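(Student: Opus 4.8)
The plan is to prove Proposition~\ref{propNestApprox_stch} by reducing each estimator to the behaviour of a single summand and then carrying out Gaussian moment computations in the style of the Nesterov--Spokoiny smoothing bounds, extended to the two--block structure and to second--order finite differences. First, in each of \eqref{stoch:gradx_minibatch}--\eqref{stochHessxy_minibatch} the $N$ summands are i.i.d., since the triples $(u_i,v_i,\zeta_i)$ are independent and $\zeta_i$ is independent of $(u_i,v_i)$. Using Stein's identity (Theorem~\ref{thStein}) together with the definition of $q_{\eta,\mu}$, I would check that a single summand is unbiased for the corresponding smoothed derivative: for the $x$-- and $y$--gradients this follows from $\EE_u[u\,Q(x,y,\zeta)]=0$ and $\EE_u[u\,Q(x+\eta u,y+\mu v,\zeta)]=\eta\,\EE_u[\nabla_x Q(x+\eta u,y+\mu v,\zeta)]$, and for $\tilde\nabla^2_{xy}$ from applying Stein once in $u$ and once in $v$, which gives $\EE_{u,v}[u v^\top Q(x+\eta u,y+\mu v,\zeta)]=\eta\mu\,\EE_{u,v}[\nabla^2_{xy}Q(x+\eta u,y+\mu v,\zeta)]$ (the baseline term $Q(x,y,\zeta)$ drops since $\EE[uv^\top]=0$). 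Since for an average $\bar A=\tfrac1N\sum_i A_i$ of i.i.d. copies with mean $\bar\mu$ one has $\EE\|\bar A-\bar\mu\|^2=\tfrac1N\EE\|A_1-\bar\mu\|^2\le\tfrac1N\EE\|A_1\|^2$, it then suffices to bound the second moment of one summand and divide by $N$; for part (b) this is done conditionally on $\theta$, which is independent of the fresh samples.

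For the per--sample bound in part (a), using that $\nabla Q$ is $L_{1,Q}$--Lipschitz I would write $Q(x+\eta u,y+\mu v,\zeta)-Q(x,y,\zeta)=\eta\langle\nabla_x Q,u\rangle+\mu\langle\nabla_y Q,v\rangle+R$, with the derivatives taken at $(x,y,\zeta)$ (hence independent of $(u,v)$) and $|R|\le\tfrac{L_{1,Q}}{2}(\eta^2\|u\|^2+\mu^2\|v\|^2)$; dividing by $\eta$, multiplying by $u$, and using $\|a+b+c\|^2\le 3(\|a\|^2+\|b\|^2+\|c\|^2)$ yields three pieces. For part (b), because the finite difference is symmetrized, a second--order expansion annihilates the linear and cubic terms and leaves $Q(x+\eta u,y+\mu v,\zeta)+Q(x-\eta u,y-\mu v,\zeta)-2Q(x,y,\zeta)=\eta^2 u^\top\nabla^2_{xx}Q\,u+2\eta\mu\,u^\top\nabla^2_{xy}Q\,v+\mu^2 v^\top\nabla^2_{yy}Q\,v+R$, with $|R|\le\tfrac{L_{2,Q}}{3}(\eta\|u\|+\mu\|v\|)^3$; the summand applied to $\theta$ then equals $\tfrac{v^\top\theta}{2\eta\mu}\big(\eta^2 u^\top\nabla^2_{xx}Q\,u+2\eta\mu\,u^\top\nabla^2_{xy}Q\,v+\mu^2 v^\top\nabla^2_{yy}Q\,v+R\big)u$, which I split into four pieces via $\|\cdot\|^2\le 4\sum\|\cdot\|^2$.

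Next I would take expectations of the resulting pieces over $u\sim\mathcal N(0,I_n)$ and $v\sim\mathcal N(0,I_m)$, factoring products using independence of $u$, $v$, and $\zeta$, and invoking standard Gaussian identities: $\EE[\langle a,u\rangle^2\|u\|^2]=(n+2)\|a\|^2$; $\EE\|u\|^{2p}=n(n+2)\cdots(n+2p-2)\le(n+2p-2)^p$; $\EE[(u^\top A u)^2\|u\|^2]=(n+4)\big((\tr A)^2+2\|A\|_F^2\big)\le(n+4)(n+2)\|A\|_F^2$ for symmetric $A$; $\EE[(v^\top\theta)^2 v^\top B v]=\|\theta\|^2\tr B+2\theta^\top B\theta$; and the sixth--order analogue $\EE[(v^\top\theta)^2(v^\top A v)^2]\lesssim(m+2)\|\theta\|^2\|A\|_F^2$. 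The remainder pieces produce the $L_{1,Q}^2$ (resp.\ $L_{2,Q}^2$) contributions after bounding $(\eta^2\|u\|^2+\mu^2\|v\|^2)^2$ (resp.\ $(\eta\|u\|+\mu\|v\|)^6$) by sums of pure powers of $\|u\|$ and $\|v\|$. Finally, since $\zeta$ is independent of $(u,v)$ and the stochastic derivatives are unbiased with bounded variance (Assumption~\ref{stochastic_assumption}), I would substitute $\EE_\zeta\|\nabla_\bullet Q(x,y,\zeta)\|^2=\sigma_{1,Q}^2+\|\nabla_\bullet q(x,y)\|^2$ and $\EE_\zeta\|\nabla^2_{\bullet\bullet}Q(x,y,\zeta)\|_F^2=\sigma_{2,Q}^2+\|\nabla^2_{\bullet\bullet}q(x,y)\|_F^2$, collect the numerical constants, and divide by $N$.

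I expect the main obstacle to be the last step for the Hessian estimate: obtaining the sixth--order Gaussian moments of products such as $(u^\top\nabla^2_{xx}Q\,u)^2\|u\|^2$ and $(v^\top\theta)^2(v^\top\nabla^2_{yy}Q\,v)^2$ with exactly the dimension factors $(n+4)(n+2)$, $(m+2)$, $(n+8)^4$ and $(m+12)^3$ appearing in the statement, using $(\tr A)^2\le n\,\|A\|_F^2$ (with $m$ in place of $n$ for the $v$--blocks) to convert trace terms into Frobenius norms, and tracking constants through the four--way split and the remainder estimate; part (a) is the same computation one order lower and is comparatively routine.
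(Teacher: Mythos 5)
Your proposal is correct and follows essentially the same route as the paper: both reduce the mini-batch estimator to a single summand via the i.i.d.\ structure and the variance-of-the-average identity, bound the per-sample second moment, and then integrate out $\zeta$ using $\EE_\zeta\|\nabla_\bullet Q\|^2=\sigma_{1,Q}^2+\|\nabla_\bullet q\|^2$ (resp.\ the Frobenius-norm analogue for the Hessian blocks). The only difference is that the paper delegates the single-sample moment bounds to Propositions~\ref{propMomentBounds} and~\ref{hessMomentBounds} quoted from \cite{AghaGhad25}, whereas you re-derive them via the Taylor-remainder split and Gaussian moment identities --- which is precisely how those cited bounds are obtained, so your sketch fills in rather than deviates from the paper's argument.
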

	\begin{proof}
	    See Section \ref{SMProof:1} of the Supplement.
	\end{proof}

	\subsection{Notation and Definitions} The vertical concatenation of two vectors $x_1\in\mathbb{R}^{d_1}$ and $x_2\in\mathbb{R}^{d_2}$ (i.e., $[x_1^\top,x_2^\top]^\top$) which is a vector in $\mathbb{R}^{d_1+d_2}$ would be denoted by $(x_1,x_2)$. We reserve $I$ for the identity matrix. For a given (stochastic) function $q$, we use $q\in \mathcal{C}^i(S;L_{i,q})$ to denote its smoothness over the set $S$, where $L_{i,q}$ represents the (expected) Lipschitz constant of $i$-th derivative of $q$ ($i=0$ corresponds to the function itself). We also use $\lambda_q$ to show the strong convexity modulus of function $q$. For a stochastic function $Q(z,\zeta)$, with a random parameter $\zeta$, we use $\sigma_{i,Q}^2$ to show global upper bounds on the variance of its $i$-th derivative.

	\section{Nearly Optimal Zeroth-Order Bilevel Algorithm}\label{near_opt}
	In this section, we present our proposed algorithm to solve the BLP problem in \eqref{main_prob_st} that only uses zeroth-order information of both upper and lower objective functions. We fisrt define a smoothed variant of \eqref{main_prob_st} as follows:
	\begin{align} \label{main_prob_zst}
		&\min_{x \in X} \Big\{\psi_{\eta,\mu}(x):= f_{\eta_1,\mu_1}(x,y_{\eta_2,\mu_2}^*(x))=\bbe[F(x+\eta_1 u,y_{\eta_2,\mu_2}^*(x)+\mu_1 v,\xi)]\Big\} \nn \\
		& \text{s.t.} \ \  y_{\eta_2,\mu_2}^*(x) = \argmin_{y \in \bbr^m} \Big\{g_{\eta_2,\mu_2}(x,y)= \bbe[G(x+\eta_2 u,y+\mu_2 v,\zeta)]\Big\},
	\end{align}
	where $\mathbf{\eta}=(\eta_1,\eta_2)$ and $\mathbf{\mu}=(\nu_1,\nu_2)$ are the smoothing parameters, $u \sim\mathcal{N}(0,I_n)$, $v\sim\mathcal{N}(0,I_m)$ are the standard normal vectors. Similar to Lemma~\ref{def_Mxy}, we can calculate the gradient of $\psi_{\eta,\mu}$ as follows:
	\begin{align}\label{def_smooth_hypergrad}
		\nabla \psi_{\eta,\mu}(\bar x)& =~ \nabla_x f_{\eta_1,\mu_1}(\bar x, y^*_{\eta_2,\mu_2}(\bar x))\nonumber \\
		& - \nabla_{xy}^2 g_{\eta_2,\mu_2}(\bar x, y^*_{\eta_2,\mu_2}(\bar x))\left[\nabla_{yy}^2 g_{\eta_2,\mu_2}(\bar x, y^*_{\eta_2,\mu_2}(\bar x))\right]^{-1}  \nabla_y f_{\eta_1,\mu_1}(\bar x, y^*_{\eta_2,\mu_2}(\bar x)).
	\end{align}
	We point out that $\psi_{\eta,\mu}$ and $\nabla \psi_{\eta,\mu}$ are introduced to only later simplify the notations in our convergence analysis.
	
	An essential step in designing most algorithms for solving BLP problems is estimating the gradient of the upper-level objective function, which involves computing the inverse of a Hessian matrix within the gradient of the function $\psi$, a process that is computationally expensive. Two main approaches have been proposed to address this computational challenge: the use of Neumann series (e.g., \cite{GhadWang18,Ji2020BilevelOC}) and stochastic gradient descent (SGD) methods (e.g., \cite{ArbelM22,chen2023optimal,dagrou2022a}). In this subsection, we provide a zeroth-order variant of the SGD algorithm from \cite{AghaGhad25} to approximate the Hessian inverse product in \eqref{def_smooth_hypergrad}. This algorithm indeed applies a biased SGD to the quadratic problem 
	\[
	\bar z = \argmin_z ~J(z) := \frac{1}{2} z^\top \nabla_{yy}^2 g_{\eta_2,\mu_2}(\bar x, \bar y) z -  \nabla_y f_{\eta_1,\mu_1}(\bar x, \bar y)^\top z,
	\]
	whose optimal solution is given by
	\begin{equation}\label{zbar:def}
		\bar z := \left[\nabla_{yy}^2 g_{\eta_2,\mu_2}(\bar x, \bar y)\right]^{-1}  \nabla_y f_{\eta_1,\mu_1}(\bar x, \bar y),
	\end{equation}
	for given $\bar x$ and $\bar y$. 
	\begin{algorithm} [H]
		\caption{The Stochastic Zeroth-order Hessian Inverse Approximation \cite{AghaGhad25}}
		\label{Hinv:alg1}
		\begin{algorithmic}
			
			\STATE \textbf{Input}:
			$\bar x \in X$, $\bar y \in \bbr^m$, smoothing parameters $\eta_2,\mu_1,\mu_2$, and maximum iterations $T$.
			\STATE Initialize $z_0$
			\STATE {\bf For $\tau=0,\ldots$, $T-1$:}
			
			{\addtolength{\leftskip}{0.2in}
				
				\STATE {Draw i.i.d random vectors $\zeta_{\tau}, \xi_\tau$ and $u_{\tau},v_\tau,u'_{\tau},v'_\tau$ independently from a standard Gaussian distribution, and calculate $\tilde \nabla J(z_\tau)$ via
					\begin{align}\notag 
						   \Big( v_\tau v_\tau^\top - &I_m\Big)\! \left[\frac{G(\bar x+\eta_2 u_\tau, \bar y+\mu_2 v_\tau, \zeta) + G(\bar x-\eta_2 u_\tau,\bar y-\mu_2 v_\tau, \zeta)-2G(\bar x,\bar y,\zeta)}{2\mu_2^2} \right]z_\tau\\&   -v_\tau'\!\left[  \frac{F(\bar x, \bar y+\mu_1 v_\tau', \xi) - F(\bar x, \bar y,\xi)}{\mu_1} \ \right],\label{gradJ:def}
					\end{align}
				}\\
				Set $$z_{\tau+1} = z_\tau - \beta \tilde \nabla J(z_\tau).$$
				\\
			}
			
			{\bf End}
			\STATE Output: $H_{\eta_2,\mu_1,\mu_2} = z_T$.
		\end{algorithmic}
	\end{algorithm}

	In the next result, we provide convergence analysis of the above algorithm. 
	\begin{lemma}\label{sgd:hessInv}
		Let $\bar z$ be defined in \eqref{zbar:def} and
		$z_T$ be the output of Algorithm~\ref{Hinv:alg1} with the choice of $\mu_1 \le 1/m$, $\eta_2=\mu_2 \le 1/\sqrt{m+n}$, $\gamma = {\cal O} \left(\frac{\epsilon}{m(m+n)^2}\right)$, and $T = \frac{m(m+n)^2}{\epsilon}\log(1/\epsilon)$. Then, 
		\begin{align*}
			\EE \left[\| z_T - \bar z\|^2 \right] = {\cal O}(\epsilon).
		\end{align*}
	\end{lemma}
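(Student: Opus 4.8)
The plan is to analyze Algorithm~\ref{Hinv:alg1} as a biased stochastic gradient method applied to the strongly convex quadratic $J(z)$, whose Hessian is $\nabla_{yy}^2 g_{\eta_2,\mu_2}(\bar x,\bar y)$. By Assumption~\ref{fg_assumption}(b) the smoothed Hessian inherits strong convexity modulus $\lambda_g$ and a Lipschitz bound $L_{1,g}$ (Gaussian smoothing preserves these up to negligible corrections in $\eta_2,\mu_2$), so $J$ is $\lambda_g$-strongly convex with $L_{1,g}$-Lipschitz gradient. First I would write the standard one-step recursion for SGD on a strongly convex function: with $z_{\tau+1}=z_\tau-\beta\tilde\nabla J(z_\tau)$ and $e_\tau := \EE[\tilde\nabla J(z_\tau)\mid z_\tau] - \nabla J(z_\tau)$ the bias, one gets
\[
\EE\big[\|z_{\tau+1}-\bar z\|^2 \mid z_\tau\big] \le (1-2\beta\lambda_g + 2\beta^2 L_{1,g}^2)\|z_\tau-\bar z\|^2 + 2\beta\,\|e_\tau\|\,\|z_\tau-\bar z\| + \beta^2\,\EE\big[\|\tilde\nabla J(z_\tau)\|^2\mid z_\tau\big].
\]
Choosing $\beta$ small enough (of order $\lambda_g/L_{1,g}^2$, which is the regime forced by $\gamma = {\cal O}(\epsilon/(m(m+n)^2))$ once the variance bound below is in place) makes the contraction factor at most $1-\beta\lambda_g$.

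The crux is to control the bias $\|e_\tau\|$ and the second moment $\EE[\|\tilde\nabla J(z_\tau)\|^2]$ of the zeroth-order gradient estimate in \eqref{gradJ:def}. I would invoke Proposition~\ref{propNestApprox_stch}: part~(b) with $\theta = z_\tau$ bounds the error of the Hessian-vector estimate $\tilde\nabla_{yy}^2 G_{\eta_2,\mu_2}(\bar x,\bar y)z_\tau$ against $\nabla_{yy}^2 g_{\eta_2,\mu_2}(\bar x,\bar y)z_\tau$, and part~(a) bounds the $\nabla_y$-estimate of $F$. Taking $\eta_2=\mu_2\le 1/\sqrt{m+n}$ and $\mu_1\le 1/m$ kills the smoothing-bias terms ($\eta^2(n+6)^3$-type contributions become ${\cal O}(1)$ rather than dimension-blowing), leaving a variance bound of the form $\EE[\|\tilde\nabla J(z_\tau)\|^2\mid z_\tau] \le c_1 (m+n)^2\|z_\tau\|^2 + c_2 (m+n)$ for constants depending on $\lambda_g, L_{1,g}, L_{2,g}$ and the noise levels $\sigma_{1,F},\sigma_{2,G}$, where the factor $(m+n)^2$ comes from the $(n+2)(n+4)$-type coefficients in Proposition~\ref{propNestApprox_stch}(b) and the extra $m$ from the $n(m+2)$ term. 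Since $\|z_\tau\|\le \|z_\tau-\bar z\|+\|\bar z\|$ and $\|\bar z\|$ is bounded (by Assumption~\ref{bnd_yx} together with the Lipschitz/strong-convexity constants, which bound $\|\nabla_y f\|$ and $\|[\nabla_{yy}^2 g]^{-1}\|$), the quadratic-in-$\|z_\tau\|$ term folds partly into the contraction and partly into a constant; the net effect is that $\beta^2\,\EE[\|\tilde\nabla J\|^2]$ contributes a term of order $\beta^2 (m+n)^2$ to the additive error, which is where the scaling $\gamma \sim \epsilon/(m(m+n)^2)$ is calibrated. The bias $\|e_\tau\|$ is ${\cal O}(\eta_2^2+\mu_1)$ times a polynomial in the dimension, which under the stated choices of the smoothing parameters is ${\cal O}(\epsilon)$ (or smaller), hence negligible.

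I would then unroll the recursion: $\EE\|z_T-\bar z\|^2 \le (1-\beta\lambda_g)^T \EE\|z_0-\bar z\|^2 + \frac{\beta}{\lambda_g}\big(\text{additive error per step}\big)$. With the additive error per step of order $\beta (m+n)^2 \cdot (\text{const})$, the steady-state term is of order $\frac{\beta}{\lambda_g}\cdot \beta(m+n)^2 = {\cal O}(\beta^2 (m+n)^2)$; setting $\beta = \gamma = \Theta(\epsilon/(m(m+n)^2))$ makes this ${\cal O}(\epsilon^2/(m^2(m+n)^2))={\cal O}(\epsilon)$, while $T = \frac{m(m+n)^2}{\epsilon}\log(1/\epsilon)$ gives $(1-\beta\lambda_g)^T \le e^{-\beta\lambda_g T} = e^{-\Theta(\log(1/\epsilon))} = {\cal O}(\epsilon)$, so that the transient term is also ${\cal O}(\epsilon)$. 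Adding the two and the ${\cal O}(\epsilon)$ bias contribution yields $\EE\|z_T-\bar z\|^2 = {\cal O}(\epsilon)$.

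The main obstacle I anticipate is the careful bookkeeping in the variance bound: one must verify that the Hessian-vector term $(v_\tau v_\tau^\top - I_m)[\cdots]z_\tau$ in \eqref{gradJ:def} is exactly the $\theta = z_\tau$ instance covered by Proposition~\ref{propNestApprox_stch}(b) (noting that only the $y$-block is smoothed in that term, so effectively $n\mapsto 0$ there, simplifying the bound to ${\cal O}((m+2)(m+4))$), and that the cross term $2\beta\|e_\tau\|\|z_\tau-\bar z\|$ is absorbed correctly via Young's inequality without degrading the contraction rate — i.e., tracking which dimension-dependent constants multiply $\|z_\tau-\bar z\|^2$ versus which become genuine additive constants. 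The choices $\mu_1\le 1/m$ and $\eta_2=\mu_2\le 1/\sqrt{m+n}$ are exactly what is needed to prevent the smoothing-induced terms from inflating the dimension dependence beyond the claimed $m(m+n)^2$.
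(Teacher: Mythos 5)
Your proposal is correct in outline, but it takes a genuinely different route from the paper: the paper's proof is essentially a two-line verification that invokes Theorem~2 of \cite{AghaGhad25} as a black box, which already supplies the recursion $\EE\|z_T-\bar z\|^2 \le (1-\gamma\lambda_g)^T\|z_0-\bar z\|^2 + {\cal O}(\epsilon \bar V/V_H)$ with $V_H = {\cal O}(\mu_2^2 m(m+n)^3 + m(m+n)^2)$ and $\bar V = {\cal O}(V_H + \mu_1^2 m^3 + m)$; all that remains there is to check that $\mu_2 \le 1/\sqrt{m+n}$ collapses $V_H$ to ${\cal O}(m(m+n)^2)$, that $\mu_1 \le 1/m$ gives $\bar V/V_H = {\cal O}(1)$, and that $T = \gamma^{-1}\log(1/\epsilon)$ kills the transient. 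You instead re-derive that external theorem from first principles via the standard biased-SGD contraction plus the second-moment bounds of Proposition~\ref{propNestApprox_stch}. This is more self-contained and makes visible \emph{why} the dimension factor $m(m+n)^2$ appears, which the paper's citation hides; the cost is that you must redo the variance bookkeeping yourself. One simplification you miss: the estimator \eqref{gradJ:def} is \emph{exactly} unbiased for $\nabla J$ of the smoothed problem by Stein's identity, and $\bar z$ in \eqref{zbar:def} is defined with respect to the smoothed functions, so the bias term $e_\tau$ you carry is actually zero rather than merely "negligible ${\cal O}(\eta_2^2+\mu_1)$."

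Two arithmetic points in your unrolling deserve correction, even though the final ${\cal O}(\epsilon)$ conclusion survives. First, the steady-state term of the recursion $a_{\tau+1}\le(1-\beta\lambda_g)a_\tau + \beta^2 V$ is $\beta V/\lambda_g$, not $\beta^2 V/\lambda_g$ (the geometric sum contributes $1/(\beta\lambda_g)$, not $\beta/\lambda_g$). Second, your per-step second-moment bound $V = {\cal O}((m+n)^2)$ drops a factor of $m$: the coefficients in Proposition~\ref{propNestApprox_stch}(b) multiply squared \emph{Frobenius} norms of $m\times m$ Hessian blocks, which are ${\cal O}(mL^2)$ rather than ${\cal O}(L^2)$, yielding $V = {\cal O}(m(m+n)^2)$ — precisely the paper's $V_H$. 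These two slips offset: the correct accounting gives steady-state error $\gamma V_H/\lambda_g$, and it is exactly the requirement $\gamma V_H = {\cal O}(\epsilon)$ that forces $\gamma = {\cal O}(\epsilon/(m(m+n)^2))$ and hence $T = \gamma^{-1}\log(1/\epsilon) = \epsilon^{-1}m(m+n)^2\log(1/\epsilon)$. With your numbers one could take $\gamma$ far larger and $T$ far smaller, which would contradict the stated complexity, so the calibration logic should be repaired even though the stated bound still holds.
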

\begin{proof}
    See Section \ref{sec:2proof1} of the Supplement. 
\end{proof}

	We should point out that by choosing smaller stepsizes and larger number of iterations than the ones used in \cite{AghaGhad25}, we could reduce the error in estimating the Hessian inverse vector product  to ${\cal O}(\epsilon)$ which will be useful later to obtain an overall finer complexity in terms of the target accuracy $\epsilon$.
	
	We are now ready to present our first algorithm for solving the BLP problem in \eqref{main_prob_st}, which is a variant of the one in \cite{AghaGhad25} with the major difference that we use mini-batch of samples to estimate all first- and second-order derivatives. The algorithm uses a double-loop framework in which the inner loop applies a SGD-type method to the lower level objective function while the outer-loop applies a similar SGD-type method to the upper level objective function (see e.g., \cite{GhadWang18,chen2021closing,franceschi2018bilevel} for double-loop algorithmic frameworks for BLP).
	
	\begin{algorithm}
		\caption{The Zeroth-order Mini-batch Double-loop Stochastic Bilevel Approximation Method}
		\label{alg_ZBSA}
		\begin{algorithmic}
			
			\STATE Input:
			Smoothing parameters $\eta_1,\eta_2,\mu_1,\mu_2$, a maximum number of iteration $N \ge 1$, an initial solution $x_0 \in X$, $y_0 \in \bbr^m$ nonnegative sequences $\{\alpha_k\}_{k \ge 0}$, $\{\beta_t\}_{t \ge 0}$, and integer sequences $\{t_k\}_{k \ge 0}$ and $\{b_k\}_{k \ge 0}$.
			
			
			{\bf For $k=0,1,\ldots$, $N$:}
			
			\vgap
			
			{\addtolength{\leftskip}{0.2in}

				{\bf For $t=0,1,\ldots, t_k-1$:}
				
			}
			\vgap
			{\color{black}
				{\addtolength{\leftskip}{0.4in}
					
					\STATE Compute partial gradient approximations of $G_{0,\mu}$ w.r.t $y$:
					\beq
					\tilde \nabla_y G_{0,\mu_2}^t  = \left[ \frac{G(x_k, y_t+\mu_2 v^{(1)}_t, \zeta^{(1)}_t) - G(x_k, y_t,\zeta^{(1)}_t)}{\mu_2}\right] v^{(1)}_t\label{sgrady}
					\eeq
					where $\zeta^{(1)}_t$ and $v^{(1)}_t$ are i.i.d samples from $\zeta$ and Gaussian distribution, respectively. Set
					\beq \label{def_yt}
					y_{t+1} = y_t- \beta_t \tilde \nabla_y G_{0,\mu_2}^t.
					\eeq
					
				}
				
				{\addtolength{\leftskip}{0.2in}
					{\bf End}
					
					\STATE  Set $\bar y_k=y_{t_k}$ and compute the partial gradient approximations:
					\begin{align}
						\tilde \nabla_x F_{\eta_1,\mu_1}^k  &= \frac{1}{s_k} \sum_{i=1}^{s_k} \tilde \nabla_x F_{\eta_1,\mu_1}^{k,i} ,\label{sgradfx0}\\
						\tilde \nabla_x F_{\eta_1,\mu_1}^{k,i}  &= \left[ \frac{F(x_k+\eta_1 u^{(2)}_{k,i}, \bar y_k, \xi_{k,i}) - F(x_k, \bar y_k,\xi_{k,i})}{\eta_1}\right] u^{(2)}_{k,i},\label{sgradfx}\\
						\tilde \nabla^2_{xy} G_{\eta_2,\mu_2}^k &= \frac{1}{s_k} \sum_{i=1}^{s_k} G_{\eta_2,\mu_2}^{k,i} u^{(3)}_{k,i} {v^{(3)}_{k,i}}^\top  \label{Hessian_xy1},
					\end{align}
					where
					\begin{align*}
					&G_{\eta_2,\mu_2}^{k,i} =\\ & \frac{G(x_k\!+\!\eta_2 u^{(3)}_{k,i}, \bar y_k\!+\!\mu_2 v^{(3)}_{k,i}, \zeta^{(3)}_{k,i}) + G(x_k\!-\!\eta_2 u^{(3)}_{k,i}, y_k\!-\!\mu_2 v^{(3)}_{k,i}, \zeta^{(3)}_{k,i})- 2 G(x_k,\bar y_k, \zeta^{(3)}_{k,i})}{2 \eta_2 \mu_2},
					\end{align*}
					and $\zeta^{(j)}_{k,i}$, $\xi_{k,i}$, and $(u^{(j)}_k,v^{(j)}_{k,i})$ are i.i.d samples from $\zeta$, $\xi$, and Gaussian distribution, respectively.
					Call Algorithm~\ref{Hinv:alg1} to compute $\tilde H^k_{\bar \eta,\bar \mu}$ with the inputs $(x_k,\bar y_k, \eta_2, \mu_1, \mu_2, T=b_k)$ and set
					\beqa
					\tilde \nabla \psi_{\bar \eta,\bar \mu}^k \equiv \tilde \nabla_x f_{\bar \eta,\bar \mu}(x_k,\bar y_k,\w_k) &=& \tilde \nabla_x F_{\eta_1,\mu_1}^k  - \tilde \nabla^2_{xy} G_{\eta_2,\mu_2}^k 
					\tilde H^k_{\bar \eta,\bar \mu},\label{grad_f_st}
					\eeqa
					where $(\bar \eta,\bar \mu) = (\eta_1,\eta_2,\mu_1,\mu_2)$ and $\w_k =(\xi_{[s_k]},\zeta^{(1)}_{[t_k]}, \zeta^{(2)}_k, \zeta^{(3)}_{[s_k]}, u^{(1)}_k, v^{(1)}_k, u^{(2)}_{[s_k]}, v^{(2)}_{[s_k]}, u^{(3)}_{[s_k]}, v^{(3)}_{[s_k]})$.
					Set
					\beq \label{def_xk_st}
					x_{k+1} = \arg\min_{x \in X} \left\{\langle \tilde \nabla \psi_{\bar \eta,\bar \mu}^k,x -x_k \rangle + \frac{1}{2 \alpha_k}\|x-x_k\|^2 \right\},
					\eeq
					
				}
			}
			
			{\bf End}
		\end{algorithmic}
	\end{algorithm}
	
	In the next result, we provide a convergence analysis of 
	the inner loop in Algorithm~\ref{alg_ZBSA} which uses a similar proof strategy \cite{AghaGhad25} with different choice of parameters to establish an improved sample complexity.
	
	\begin{lemma}\label{lemma_sgd}
		Let $\{y_t\}_{t=0}^{t_k}$ be the sequence generated at the $k$-th iteration of Algorithm~\ref{alg_ZBSA}. If
		\beq
		\beta_t = {\cal O}\left(\frac{\epsilon}{m}\right), \ \ t_k ={\cal O}\left(\frac{m}{\epsilon} \log \left(\frac{m}{\epsilon}\right) \right), \ \ \eta_2 = \mu_2 \le \sqrt{\min \left(1, \frac{\epsilon}{n}, \frac{1}{m^3} \right)}.\label{alpha_beta_st}
		\eeq
		we have
		\[
		\EE \|y_{t_k} - y_{\eta_2,\mu_2}^*(x_k) \|^2  = {\cal O} (\epsilon).
		\]
	\end{lemma}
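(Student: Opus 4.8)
The plan is to recognize the inner loop \eqref{sgrady}--\eqref{def_yt} as ordinary (unconstrained) SGD with constant stepsize $\beta:=\beta_t$ applied to the $y$-smoothed lower objective $h_k(y):=g_{0,\mu_2}(x_k,y)=\bbe_v[g(x_k,y+\mu_2 v)]$; write $\hat y_k:=\argmin_{y\in\bbr^m}h_k(y)$. I would first establish $\EE\|y_{t_k}-\hat y_k\|^2={\cal O}(\epsilon)$ and then bridge from $\hat y_k$ to $y^*_{\eta_2,\mu_2}(x_k)$. For the first part: (i) record that smoothing in $y$ preserves $\lambda_g$-strong convexity and $L_{1,g}$-Lipschitzness of the gradient, so $h_k$ is $\lambda_g$-strongly convex and $L_{1,g}$-smooth with $\nabla h_k(\hat y_k)=0$; (ii) by Stein's identity (Theorem~\ref{thStein}) the estimator $\tilde\nabla_y G^t_{0,\mu_2}$ in \eqref{sgrady} is conditionally unbiased for $\nabla h_k(y_t)$; (iii) its conditional second moment is controlled by Proposition~\ref{propNestApprox_stch}(a) with the $x$-smoothing radius set to $0$ and batch size $1$, which (after using $\|\nabla_y g(x_k,y_t)\|\le\|\nabla h_k(y_t)\|+{\cal O}(\sqrt{m}\,\mu_2 L_{1,g})$ and $\|\nabla h_k(y_t)\|\le L_{1,g}\|y_t-\hat y_k\|$) yields a multiplicative-plus-additive bound $\EE[\|\tilde\nabla_y G^t_{0,\mu_2}\|^2\mid y_t]\le A\|y_t-\hat y_k\|^2+B$ with $A={\cal O}(m L_{1,g}^2)$ and $B={\cal O}(m\sigma_{1,G}^2+m^3\mu_2^2 L_{1,g}^2)$.

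With these in hand, expanding $\|y_{t+1}-\hat y_k\|^2$, taking the conditional expectation, and using strong convexity ($\langle\nabla h_k(y_t),y_t-\hat y_k\rangle\ge\lambda_g\|y_t-\hat y_k\|^2$) together with (iii) gives $\EE[\|y_{t+1}-\hat y_k\|^2\mid y_t]\le(1-2\beta\lambda_g+\beta^2 A)\|y_t-\hat y_k\|^2+\beta^2 B$. Since $\beta={\cal O}(\epsilon/m)$ with a small enough absolute constant, $\beta^2 A={\cal O}(\epsilon^2 L_{1,g}^2/m)\le\beta\lambda_g$, so the contraction factor is at most $1-\beta\lambda_g$; unrolling over $t=0,\dots,t_k-1$ gives $\EE\|y_{t_k}-\hat y_k\|^2\le(1-\beta\lambda_g)^{t_k}\,\EE\|y_0-\hat y_k\|^2+\beta B/\lambda_g$. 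Now $\beta B/\lambda_g={\cal O}(\epsilon/m)\cdot{\cal O}(m\sigma_{1,G}^2+m^3\mu_2^2 L_{1,g}^2)={\cal O}(\epsilon)+{\cal O}(\epsilon\,m^2\mu_2^2)={\cal O}(\epsilon)$ because $\mu_2^2\le 1/m^3$; and $\beta\lambda_g t_k={\cal O}(\log(m/\epsilon))$, so for an appropriate choice of the hidden constants in $\beta$ and $t_k$ the transient term is ${\cal O}(\epsilon)\cdot\EE\|y_0-\hat y_k\|^2$, with $\EE\|y_0-\hat y_k\|^2$ bounded uniformly in $k$ by Assumption~\ref{bnd_yx} (together with $\|\hat y_k-y^*(x_k)\|={\cal O}(\mu_2\sqrt{m})$). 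Hence $\EE\|y_{t_k}-\hat y_k\|^2={\cal O}(\epsilon)$.

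For the bridge, both $\hat y_k$ and $y^*_{\eta_2,\mu_2}(x_k)$ are minimizers of $\lambda_g$-strongly convex functions and $g_{\eta_2,\mu_2}(x_k,\cdot)=\bbe_u[g_{0,\mu_2}(x_k+\eta_2 u,\cdot)]$, so I would bound $\|\hat y_k-y^*_{\eta_2,\mu_2}(x_k)\|\le\lambda_g^{-1}\|\nabla_y g_{\eta_2,\mu_2}(x_k,\hat y_k)\|=\lambda_g^{-1}\|\bbe_u[\nabla_y g_{0,\mu_2}(x_k+\eta_2 u,\hat y_k)-\nabla_y g_{0,\mu_2}(x_k,\hat y_k)]\|\le(L_{1,g}/\lambda_g)\,\eta_2\sqrt{n}$, using $\nabla_y g_{0,\mu_2}(x_k,\hat y_k)=0$ and $L_{1,g}$-Lipschitzness of $\nabla g$ in $x$. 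Since $\eta_2^2=\mu_2^2\le\epsilon/n$ this gives $\|\hat y_k-y^*_{\eta_2,\mu_2}(x_k)\|^2={\cal O}(\epsilon)$, and the triangle inequality $\EE\|y_{t_k}-y^*_{\eta_2,\mu_2}(x_k)\|^2\le 2\EE\|y_{t_k}-\hat y_k\|^2+2\|\hat y_k-y^*_{\eta_2,\mu_2}(x_k)\|^2={\cal O}(\epsilon)$ finishes the argument.

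The main obstacle I expect is the bookkeeping that ties the three conditions in \eqref{alpha_beta_st} to the individual error sources: the estimator's second moment carries a dimension factor $m$ (through Proposition~\ref{propNestApprox_stch}(a) with $N=1$), which forces $\beta={\cal O}(\epsilon/m)$ and hence $t_k={\cal O}((m/\epsilon)\log(m/\epsilon))$ to annihilate the transient; $\mu_2^2\le 1/m^3$ is exactly what makes the Gaussian-smoothing bias term ${\cal O}(m^3\mu_2^2)$ contribute only ${\cal O}(\epsilon)$ after multiplication by $\beta/\lambda_g$; and $\eta_2\le\sqrt{\epsilon/n}$ is exactly what controls the mismatch between the $y$-only smoothed minimizer the loop actually tracks and the fully smoothed $y^*_{\eta_2,\mu_2}(x_k)$ appearing in the statement. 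Verifying that all of these land at ${\cal O}(\epsilon)$ simultaneously — and that the contraction factor genuinely stays below $1-\beta\lambda_g$ despite the multiplicative noise — is the delicate part; the remainder is the standard strongly-convex SGD recursion.
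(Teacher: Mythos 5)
Your proof is correct and, at the level of the underlying mathematics, follows the same route as the paper: a strongly convex SGD recursion for the inner loop, a contraction factor $1-\beta\lambda_g$ obtained by absorbing the multiplicative part of the zeroth-order noise, a noise floor of order $\beta B/\lambda_g$, and a separate accounting of the Gaussian-smoothing bias, with the three conditions in \eqref{alpha_beta_st} each killing one error source exactly as you describe. The only real difference is one of presentation and of the intermediate anchor point: the paper's proof is a one-line citation of \cite[Lemma 4]{AghaGhad25}, whose bound compares $y_{t_k}$ to $y^*(x_k)$ and therefore carries both an $\eta_2^2 n$ and a $\mu_2^2 m$ smoothing-discrepancy term, whereas you run the recursion toward the $y$-only-smoothed minimizer $\hat y_k=\argmin_y g_{0,\mu_2}(x_k,y)$ (the true fixed point of the update \eqref{def_yt}, which never perturbs $x_k$) and then bridge to $y^*_{\eta_2,\mu_2}(x_k)$ with a clean $(L_{1,g}/\lambda_g)\,\eta_2\sqrt{n}$ perturbation bound. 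This decomposition is arguably slightly tidier, since the $\mu_2$-direction smoothing never enters the bridge and only appears in the noise floor, where $\mu_2^2\le 1/m^3$ handles it. Two cosmetic points to tighten if you wrote this up: Proposition~\ref{propNestApprox_stch}(a) is stated for positive $\eta$, so invoking it ``with the $x$-smoothing radius set to $0$'' should be replaced by the single-block Nesterov--Spokoiny second-moment bound (the terms with $\eta$ in the numerator vanish, so nothing changes); and the boundedness of $\EE\|y_0-\hat y_k\|^2$ should be stated as following from Assumption~\ref{bnd_yx} together with the fixed initialization, exactly as the paper's imported bound does through its $8\epsilon_k\|y_0-y^*(x_k)\|^2$ term.
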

\begin{proof}
    See Section \ref{sec:2proof2} of the Supplement. 
\end{proof}

	We are now ready to present the main convergence results of Algorithm~\ref{alg_ZBSA}.
	
	\begin{theorem}\label{theom_main_bsa}
		Suppose that $\{\bar y_k, x_k \}_{k \ge 0}$ is generated by Algorithm~\ref{alg_ZBSA}, Assumptions~\ref{fg_assumption} and ~\ref{stochastic_assumption} hold, $t_k$ and $\beta_t$ are chosen according to  \eqref{alpha_beta_st}, and 
		\begin{align}
			\alpha_k &\le \frac{1}{5L_{1,\psi}}, \qquad \qquad \qquad \qquad \qquad \quad \quad s_k = \frac{\max(24 (n+2), \sqrt{nm})}{\epsilon}\label{alpha_sk},\\
			\eta_1=\mu_1 &= \min \left\{\frac{1}{(n+m)^2}, \sqrt{\frac{\epsilon}{(n+m)^3}}\right\}, \qquad \eta_2=\mu_2 = \min \left\{\frac{1}{n+m},\sqrt{\frac{\epsilon}{n+m}} \right\}.\label{smooth_params}
		\end{align}

			If $\psi$ is possibly nonconvex, $X=\bbr^n$ (for simplicity)\footnote{We should mention that our results hold for the general case of $X$, however, we need to define a notion of generalized gradient in the constraint case such as gradient mapping, which has been removed to simplify our presentation.}, we have
			\begin{align}\label{bg_nocvx}
				\bbe\|\nabla \psi(x_R)\|^2 \le \frac{ \psi(x_0) -\psi^* +  {\cal O }(\epsilon) \sum_{k=1}^N (\alpha_k+\alpha_k^2)}{\sum_{k=0}^{N-1} \alpha_k},
			\end{align}
			where the expectation is also taken with respect to the integer random variable $R$ whose probability mass function is given by
			\beq
			Prob(R=k) = \frac{\alpha_k}{\sum_{\tau=0}^{N-1}\alpha_{\tau}} \qquad k=0,1,\ldots, N-1.\label{def_R}
			\eeq
		\end{theorem}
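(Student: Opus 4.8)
The plan is to follow the standard descent-lemma argument for projected stochastic gradient on a smooth nonconvex objective, but applied to the \emph{smoothed} hypergradient $\nabla\psi_{\bar\eta,\bar\mu}$, and then to transfer the conclusion back to $\nabla\psi$ by controlling the smoothing bias. First I would record that, by Assumption~\ref{fg_assumption} and the differentiability of $\psi_{\bar\eta,\bar\mu}$, the map $\psi_{\eta,\mu}$ inherits an $L_{1,\psi}$-type Lipschitz-gradient bound (uniformly in the smoothing parameters in the stated range), so that with $X=\mathbb{R}^n$ the update \eqref{def_xk_st} reduces to $x_{k+1}=x_k-\alpha_k\tilde\nabla\psi^k_{\bar\eta,\bar\mu}$ and the descent inequality
\[
\psi_{\eta,\mu}(x_{k+1})\le \psi_{\eta,\mu}(x_k)-\alpha_k\langle\nabla\psi_{\eta,\mu}(x_k),\tilde\nabla\psi^k_{\bar\eta,\bar\mu}\rangle+\tfrac{L_{1,\psi}\alpha_k^2}{2}\|\tilde\nabla\psi^k_{\bar\eta,\bar\mu}\|^2
\]
holds. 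Taking conditional expectation over the randomness at step $k$, I would split $\tilde\nabla\psi^k_{\bar\eta,\bar\mu}$ into $\nabla\psi_{\eta,\mu}(x_k)$ plus a bias term plus a zero-mean term; the bias comes from (i) the inexactness of the lower-level solve $\bar y_k\approx y^*_{\eta_2,\mu_2}(x_k)$, controlled by Lemma~\ref{lemma_sgd} at level ${\cal O}(\epsilon)$, (ii) the inexactness of the Hessian-inverse-vector product $\tilde H^k$, controlled by Lemma~\ref{sgd:hessInv} at level ${\cal O}(\epsilon)$, and (iii) the Gaussian-smoothing discrepancy between $\nabla\psi_{\eta,\mu}$ and $\nabla\psi$, which the choice \eqref{smooth_params} drives to ${\cal O}(\sqrt\epsilon)$ in norm, hence ${\cal O}(\epsilon)$ in squared norm. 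The variance of the zero-mean term is handled by Proposition~\ref{propNestApprox_stch}: with batch size $s_k={\cal O}(\max(n,\sqrt{nm})/\epsilon)$ and the chosen smoothing radii, each of the three variance contributions in the bounds of Proposition~\ref{propNestApprox_stch}(a),(b) is ${\cal O}(\epsilon)$, and the deterministic smoothing-error terms (the $L_{1,Q}^2$ and $L_{2,Q}^2$ pieces) are likewise ${\cal O}(\epsilon)$ after substituting $\eta_i,\mu_i$.

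Next I would combine these estimates into a one-step inequality of the form
\[
\mathbb{E}[\psi_{\eta,\mu}(x_{k+1})]\le \mathbb{E}[\psi_{\eta,\mu}(x_k)]-\tfrac{\alpha_k}{2}\mathbb{E}\|\nabla\psi_{\eta,\mu}(x_k)\|^2+{\cal O}(\epsilon)(\alpha_k+\alpha_k^2),
\]
using $\alpha_k\le 1/(5L_{1,\psi})$ to absorb the $\frac{L_{1,\psi}\alpha_k^2}{2}\|\nabla\psi_{\eta,\mu}\|^2$ cross-term into the $-\frac{\alpha_k}{2}\|\nabla\psi_{\eta,\mu}\|^2$ term (the constant $5$ is exactly what makes the quadratic-in-$\alpha_k$ pieces coming from variance and from $\|\nabla\psi_{\eta,\mu}\|^2$ collapse). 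Summing over $k=0,\dots,N-1$, telescoping, and using $\psi_{\eta,\mu}(x_0)\le\psi(x_0)+{\cal O}(\epsilon)$ and $\psi_{\eta,\mu}(x_N)\ge\psi^*-{\cal O}(\epsilon)$ gives
\[
\sum_{k=0}^{N-1}\alpha_k\,\mathbb{E}\|\nabla\psi_{\eta,\mu}(x_k)\|^2\le 2\big(\psi(x_0)-\psi^*\big)+{\cal O}(\epsilon)\sum_{k=1}^{N}(\alpha_k+\alpha_k^2).
\]
Dividing by $\sum_{k=0}^{N-1}\alpha_k$ and recognizing the left side as $\mathbb{E}\|\nabla\psi_{\eta,\mu}(x_R)\|^2$ with $R$ distributed as in \eqref{def_R}, I would finish by the triangle inequality $\|\nabla\psi(x_R)\|^2\le 2\|\nabla\psi_{\eta,\mu}(x_R)\|^2+2\|\nabla\psi(x_R)-\nabla\psi_{\eta,\mu}(x_R)\|^2$ and absorbing the ${\cal O}(\epsilon)$ smoothing bias into the ${\cal O}(\epsilon)\sum(\alpha_k+\alpha_k^2)$ numerator, yielding \eqref{bg_nocvx}.

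The main obstacle I anticipate is the careful bookkeeping in the bias/variance decomposition of $\tilde\nabla\psi^k_{\bar\eta,\bar\mu}=\tilde\nabla_xF^k_{\eta_1,\mu_1}-\tilde\nabla^2_{xy}G^k_{\eta_2,\mu_2}\tilde H^k$: this is a product of two \emph{independent} stochastic estimators, so its expectation factors, but bounding $\mathbb{E}\|\tilde\nabla^2_{xy}G^k\tilde H^k-\nabla^2_{xy}g_{\eta_2,\mu_2}\bar z\|^2$ requires simultaneously using the operator-norm/Frobenius bound on $\tilde\nabla^2_{xy}G^k$ from Proposition~\ref{propNestApprox_stch}(b) (applied with the random vector $\theta=\tilde H^k$, which is where the ``$\mid\theta$'' conditioning in that proposition is used), the ${\cal O}(\epsilon)$ accuracy of $\tilde H^k$ from Lemma~\ref{sgd:hessInv}, and the boundedness of $\bar z$ (via Assumption~\ref{bnd_yx}, strong convexity of $g$, and Lipschitzness of $\nabla_yf$). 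One must verify that all cross-terms and all the dimension-dependent constants in Proposition~\ref{propNestApprox_stch} are indeed killed by the specific parameter choices in \eqref{alpha_sk}–\eqref{smooth_params} — in particular that the $\sqrt{nm}$ in $s_k$ is exactly what tames the $\frac{\mu_2^2}{\eta_2^2}n(m+2)(\cdots)$ and $\frac{\eta^4}{\mu^2}m(n+4)^2$ cross-block terms once $\eta_2=\mu_2$ — and that the inexact-lower-level error propagates linearly (through Lipschitz continuity of $\nabla\psi_{\eta,\mu}$ in $\bar y$) rather than being amplified. I would also need a short lemma (or a citation to \cite{AghaGhad25}) certifying $\|\nabla\psi_{\eta,\mu}-\nabla\psi\|={\cal O}(\eta_1\cdot\mathrm{poly}(n,m)+\mu_1\cdot\mathrm{poly}(n,m))$, which under \eqref{smooth_params} is ${\cal O}(\sqrt\epsilon)$; establishing this cleanly for the composite hypergradient \eqref{def_smooth_hypergrad} (as opposed to a single smoothed function) is the other delicate point.
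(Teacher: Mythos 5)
Your proposal is correct in substance and assembles exactly the right ingredients (Lemma~\ref{lemma_sgd} for the inner loop, Lemma~\ref{sgd:hessInv} for the Hessian-inverse product, Proposition~\ref{propNestApprox_stch} for the estimator variances, and the smoothing-bias bound you ask for, which is precisely Proposition~\ref{zeroth-order bilevel} in the appendix), but it is organized differently from the paper. You run the descent lemma on the \emph{smoothed} objective $\psi_{\eta,\mu}$, telescope that, and only convert back to $\nabla\psi$ at the very end via a triangle inequality; the paper instead applies the descent lemma directly to $\psi$ (whose gradient Lipschitzness is already available from Lemma~\ref{grad_f_error}.(c)) and folds the smoothing discrepancy into the estimator bias through the three-way split $\tilde\Delta_k=\tilde\delta_k+\delta_k+\Delta_k$, with $\Delta_k=\nabla\psi_{\eta,\mu}(x_k)-\nabla\psi(x_k)$ handled by Proposition~\ref{zeroth-order bilevel} as just one more ${\cal O}(\epsilon)$ term per iteration. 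The paper's route buys two small conveniences that your route must pay for separately: (i) you need to certify that $\psi_{\eta,\mu}$ itself has an $L_{1,\psi}$-type Lipschitz gradient uniformly over the stated smoothing range (plausible, since $f_{\eta_1,\mu_1},g_{\eta_2,\mu_2}$ inherit smoothness and strong convexity, but it is an extra lemma the paper never needs), together with the uniform function-value gap $|\psi_{\eta,\mu}-\psi|$ for the telescoping endpoints (this gap is only ${\cal O}(\sqrt\epsilon)$ because of the $\|y^*_{\eta_2,\mu_2}-y^*\|$ shift, which is harmless here only because it gets divided by $\sum\alpha_k={\cal O}(1/\epsilon)$ — worth stating explicitly); and (ii) your final triangle inequality puts a factor $2$ on the $\psi(x_0)-\psi^*$ term, which is absorbable under the paper's loose constant conventions but means you recover \eqref{bg_nocvx} only up to an absolute constant. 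Conversely, your route keeps the per-iteration bookkeeping cleaner by never mixing smoothed and unsmoothed gradients. One point you state too casually: the variance contributions from Proposition~\ref{propNestApprox_stch}(a) are \emph{not} all ${\cal O}(\epsilon)$ after choosing $s_k$ — the $\tfrac{4(n+2)}{s_k}\|\nabla_x q\|^2$ piece is proportional to the squared gradient norm and must be absorbed into the negative $-\tfrac{\alpha_k}{2}\|\nabla\psi_{\eta,\mu}(x_k)\|^2$ term (this is exactly why $s_k\ge 24(n+2)/\epsilon$ and $\alpha_k\le 1/(5L_{1,\psi})$ appear with those constants); you do gesture at this absorption, but it is the one step where the argument would silently fail if the gradient-dependent variance were treated as a constant.
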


		\begin{proof}
			Noting \eqref{def_xk_st} and the smoothness of $\psi$ due to Lemma~\ref{grad_f_error}.(c), we have
			\begin{align}
				\psi(x_{k+1}) &\le \psi(x_k) + \langle \nabla \psi(x_k),x_{k+1}-x_k \rangle +\frac{L_{1,\psi}}{2}\|x_{k+1}-x_k\|^2 \nn \\
				&= \psi(x_k) - \alpha_k \langle \nabla \psi(x_k), \tilde \nabla \psi_{\bar \eta,\bar \mu}^k \rangle +\frac{L_{1,\psi}\alpha_k^2}{2}\|\tilde \nabla \psi_{\bar \eta,\bar \mu}^k\|^2. \label{psi_smooth1}
			\end{align}
			Let $\tilde \Delta_k \equiv \tilde \nabla \psi_{\bar \eta,\bar \mu}^k - \nabla \psi(x_k)$, and denote by $\bbe_{|k}$ the conditional expectation given all random sequences generated up to the $k$-th iteration of Algorithm~\ref{alg_ZBSA}, then
			\begin{align}
				\bbe_{|k}[\psi(x_{k+1})] &\le \psi(x_k) -\alpha_k  \langle \nabla \psi(x_k) , \bbe_{|k} [\tilde \nabla \psi_{\bar \eta,\bar \mu}^k] \rangle
				+\frac{L_{1,\psi} \alpha_k^2}{2}\bbe_{|k}\|\tilde \nabla \psi_{\bar \eta,\bar \mu}^k\|^2 \nn \\
				&= \psi(x_k) -\alpha_k \|\nabla \psi(x_k)\|^2 -\alpha_k \langle \nabla \psi(x_k), \bbe_{|k}[\tilde \Delta_k] \rangle
				+\frac{L_{1,\psi} \alpha_k^2}{2}\bbe_{|k}\|\tilde \nabla \psi_{\bar \eta,\bar \mu}^k\|^2 \nn \\
				&\le \psi(x_k) -\frac{\alpha_k}{2}(1-2L_{1,\psi} \alpha_k)\|\nabla \psi(x_k)\|^2 +\frac{\alpha_k}{2} \left\|\bbe_{|k}[\tilde \Delta_k]\right\|^2
				\nn \\
				&\quad +L_{1,\psi} \alpha_k^2\bbe_{|k}\|\tilde \Delta_k\|^2.\label{smooth_psi}
			\end{align}
			Now, we bound the last two terms on the right-hand side of the above inequality. Denoting $\tilde \delta_k =\tilde \nabla \psi_{\bar \eta,\bar \mu}^k - \bar \nabla (f_{\eta_1,  \mu_1}, g_{\eta_2,\mu_2})(x_k,\bar y_k)$,
			$\delta_k = \bar \nabla (f_{\eta_1,  \mu_1}, g_{\eta_2,\mu_2})(x_k,\bar y_k)-\nabla \psi_{\bar \eta,\bar \mu} (x_k)$, and $\Delta_k = \nabla \psi_{\bar \eta,\bar \mu} (x_k)-\nabla \psi(x_k)$, 
			we have $\tilde \Delta_k = \tilde \delta_k+ \delta_k +\Delta_k$, where $\bar \nabla (f,g)(\cdot, \cdot)$ is defined in \eqref{grad_f}. Using the superscript convention $h^k \equiv h(x_k,\bar y_k)$ for any function $h$, we have
			\begin{align*}
				\bbe_{|k}[\tilde \delta_k] &=   \bbe_{|k}[\tilde \nabla_x F_{\eta_1,\mu_1}^k - \nabla_x f^k_{\eta_1,\mu_1}] \\
				&\quad + \bbe_{|k}\left[\nabla^2_{xy} g^k_{\eta_2,\mu_2}[\nabla^2_{yy} g^k_{\eta_2,\mu_2}]^{-1}\nabla_y f^k_{\eta_1,\mu_1}- \tilde \nabla^2_{xy} G_{\eta_2,\mu_2}^k 
				\tilde H^k_{\bar \eta,\bar \mu}\right] \\
				&= \nabla^2_{xy} g^k_{\eta_2,\mu_2}\left([\nabla^2_{yy} g^k_{\eta_2,\mu_2}]^{-1}\nabla_y f^k_{\eta_1,\mu_1}- \bbe_{|k}\left[\tilde H^k_{\bar \eta,\bar \mu}\right]\right)
				\\&~~~ +\bbe_{|k}\left[\left(\nabla^2_{xy} g^k_{\eta_2,\mu_2} - \tilde \nabla^2_{xy} G_{\eta_2,\mu_2}^k\right)
				\tilde H^k_{\bar \eta,\bar \mu}\right] \\
				&= \nabla^2_{xy} g^k_{\eta_2,\mu_2}\left([\nabla^2_{yy} g^k_{\eta_2,\mu_2}]^{-1}\nabla_y f^k_{\eta_1,\mu_1}- \bbe_{|k}\left[\tilde H^k_{\bar \eta,\bar \mu}\right]\right),
			\end{align*}
			which in the view of Lemma~\ref{sgd:hessInv}, implies that 
			\beq
			\left\|\bbe_{|k}[\tilde \delta_k] \right\|^2 \le \left\|\nabla^2_{xy} g^k_{\eta_2,\mu_2} \right\|^2 \left\|[\nabla^2_{yy} g^k_{\eta_2,\mu_2}]^{-1}\nabla_y f^k_{\eta_1,\mu_1} \bbe_{|k}-\left[\tilde H^k_{\bar \eta,\bar \mu}\right] \right\|^2 \le {\cal O}(\epsilon).\label{def_tildeAk} 
			\eeq
			Moreover, it is true that
			\begin{align}
				\bbe_{|k}[\|\tilde \delta_k\|^2] &\le 
				3\bbe_{|k} \left\|\tilde \nabla_x F_{\eta_1,\mu_1}^k - \nabla_x f^k_{\eta_1,\mu_1}\right\|^2 \nn\\
				&~~~+3 \left\|\nabla^2_{xy} g^k_{\eta_2,\mu_2} \right\|^2 \bbe_{|k}\left\|\left[\nabla^2_{yy} g^k_{\eta_2,\mu_2}\right]^{-1}\nabla_y f^k_{\eta_1,\mu_1} - \tilde H^k_{\bar \eta,\bar \mu}\right\|^2 \nn \\&~~~+3\bbe_{|k}\left\|\left(\nabla^2_{xy} g^k_{\eta_2,\mu_2}- \tilde \nabla^2_{xy} G_{\eta_2,\mu_2}^k \right) \tilde H^k_{\bar \eta,\bar \mu}\right\|^2.
				\label{delta_expec}
			\end{align}
			Assuming $\eta_1=\mu_1\le 1/(m+n)^2$, by Proposition~\ref{propNestApprox_stch}.(a) we have
			\begin{align*}
				\bbe_{|k}\! \left\|\tilde \nabla_x F_{\eta_1,\mu_1}^k \!-\! \nabla_x f^k_{\eta_1,\mu_1}\right\|^2  &
				\le \frac{L_{1,Q}^2 \eta_1^2}{s_k}\left((n+6)^3+n(m+4)^2 \right) \\ &~~+  \frac{4(n+2)}{s_k}\left(\sigma_{1,Q}^2\!+\! \|\nabla_y  f(x, y)\|^2\right) +  \frac{4(n+2)}{s_k}\|\nabla  \psi(x_k)\|^2\\
				&\le {\cal O } \left(\frac{n}{s_k} \right) + \frac{4(n+2)}{s_k}\|\nabla  \psi(x_k)\|^2.
			\end{align*}
			Moreover, by Lemma~\ref{sgd:hessInv}:
			\begin{align*}
				\bbe_{|k}\left\|\left[\nabla^2_{yy} g^k_{\eta_2,\mu_2}\right]^{-1}\nabla_y f^k_{\eta_1,\mu_1} - \tilde H^k_{\bar \eta,\bar \mu}\right\|^2  &\le {\cal O}(\epsilon),\\
				\bbe_{|k}\left\| \tilde H^k_{\bar \eta,\bar \mu}\right\|^2  &\le {\cal O}(\epsilon) + \left\|\left[\nabla^2_{yy} g^k_{\eta_2,\mu_2}\right]^{-1}\nabla_y f^k_{\eta_1,\mu_1} \right\|^2 = {\cal O}(1).
			\end{align*}
			In addition, assuming $\mu_2=\eta_2 \le 1/(n+m)$ and by Proposition~\ref{propNestApprox_stch}.(b) we get
			\begin{align*}
				& \bbe_{|k}\left\|\left(\nabla^2_{xy} g^k_{\eta_2,\mu_2}- \tilde \nabla^2_{xy} G_{\eta_2,\mu_2}^k \right) \tilde H^k_{\bar \eta,\bar \mu}\right\|^2 
				\bbe_{|k}\left[\left\|\tilde H^k_{\bar \eta,\bar \mu_2}\right\|^2\right]^{-1} \\
				&\le \frac{8L_{2,G}^2}{s_k}\left[ \eta_2^2(n+8)^4 \!+\! 2\mu_2^2 n(m+12)^3\right]+\frac{1}{s_k} \bigg[ 6(n+4)(n+2)\left(\sigma_{2,G}^2 \!+\! \|\nabla^2_{xx} g(x,y)\|_F^2\right) \\
				&~~~+ 36(n+2)\left(\sigma_{2,G}^2 + \|\nabla^2_{xy}g(x,y)\|_F^2\right)+ 30 n(m+2)\left(\sigma_{2,G}^2 + \|\nabla^2_{yy}g(x,y )\|_F^2\right)\bigg] \\&={\cal O} \left(\frac{n+\sqrt{n m}}{s_k} \right).
			\end{align*}
			Combining the above inequalities yields 
			\[
			\bbe_{|k}\left\|\tilde \delta_k \right\|^2 \le {\cal O } \left(\frac{n+\sqrt{n m}}{s_k} \right) + {\cal O }(\epsilon) +\frac{12(n+2)}{s_k}\|\nabla  \psi(x_k)\|^2.
			\]
			In addition, in the view of Proposition~\ref{zeroth-order bilevel}, Lemma~\ref{grad_f_error}.(a), Lemma~\ref{lemma_sgd}, and assuming $\mu_2 =\eta_2 \le \sqrt{\epsilon/(n+m)}, \mu_1=\eta_1 = \sqrt{\epsilon/(n+m)^3}$, we also have
			\begin{align}
				\bbe_{|k}\|\Delta_k\|^2 \le {\cal O}(\epsilon),\qquad 
				\bbe_{|k}\|\delta^k\|^2 \le C_1^2 \EE_{|k} \|y_{t_k} - y_{\eta_2,\mu_2}^*(x_k) \|^2  \le {\cal O}(\epsilon).\label{def_A_delta}
			\end{align}
			Combining the above inequalities, we obtain
			\begin{align}
				\left\|\bbe_{|k}[\tilde \Delta_k]\right\|^2 &\le 3 \left(\left\|\bbe_{|k}\left[\tilde \delta_k\right]\right\|^2 + \bbe_{|k}\|\delta^k\|^2 + \bbe_{|k}\|\Delta_k\|^2\right) \le {\cal O }(\epsilon),\\
				\bbe_{|k}\|\tilde \Delta_k\|^2 &\le 3 \left(\bbe_{|k}\left\|\tilde \delta_k\right\|^2 + \bbe_{|k}\|\delta^k\|^2 + \bbe_{|k}\|\Delta_k\|^2\right)\nn\\
				&\le {\cal O }(\epsilon) + {\cal O } \left(\frac{n+\sqrt{n m}}{s_k} \right) +\frac{12(n+2)}{s_k}\|\nabla  \psi(x_k)\|^2,
			\end{align}
			which together with \eqref{smooth_psi}, imply that
			\begin{align*}
				\bbe_{|k}[\psi(x_{k+1})] &\le \psi(x_k) -\frac{\alpha_k}{2}\left(1-2L_{1,\psi} \alpha_k \left[1+\frac{24(n+2)}{s_k} \right] \right)\|\nabla \psi(x_k)\|^2 \\
				& ~~~+(\alpha_k+\alpha_k^2) {\cal O }(\epsilon)
				\nn + \alpha_k^2 {\cal O } \left(\frac{n+\sqrt{n m}}{s_k} \right).
			\end{align*}
			Choosing the batch size as $s_k = \max(24 (n+2), \sqrt{nm})/\epsilon$ and assuming $\epsilon \le 1$, we have
			\begin{align*}
				\bbe_{|k}[\psi(x_{k+1})] &\le \psi(x_k) -\frac{\alpha_k}{2}\left(1-4L_{1,\psi} \alpha_k \right)\|\nabla \psi(x_k)\|^2 +  (\alpha_k+\alpha_k^2) {\cal O }(\epsilon).
			\end{align*}
			Summing up the above inequalities, re-arranging the terms, and noting the fact that $\alpha_k \le 1/(5L_{1,\psi})$, we obtain
			\[
			\sum_{k=0}^{N-1} \alpha_k \|\nabla \psi(x_k)\|^2 \le \psi(x_0) -\psi^* +  {\cal O }(\epsilon) \sum_{k=1}^N (\alpha_k+\alpha_k^2).
			\]
			Divining both sides of the above inequality by $\sum_{k=1}^N \alpha_k$, and noting that
			\[
			\bbe[\|\nabla \psi(x_R)\|^2] = \sum_{k=0}^{N-1} \frac{\alpha_k}{\sum_{\tau=0}^{N-1}\alpha_{\tau}}\bbe[\|\nabla \psi(x_k)\|^2],
			\]
			we obtain \eqref{bg_nocvx}.
			
		\end{proof}

		In the next result, we specialize the rate of convergence of Algorithm~\ref{alg_ZBSA}.
		
		\begin{corollary} \label{lemma_main_bsa}
			Suppose that $\{x_k ,\bar y_k, \}_{k \ge 0}$ is generated by Algorithm~\ref{alg_ZBSA}, Assumptions~\ref{fg_assumption} and \ref{stochastic_assumption} hold. Also, assume that $t_k$ and $\beta_k$ are set to \eqnok{alpha_beta_st}, respectively, and smoothing parameters are set to \eqnok{smooth_params}. Choosing
			\beq\label{alpha_N}
			\alpha_k= \frac{1}{5L_{1,\psi}}, \qquad N = {\cal O} \left(\frac{1}{\epsilon}\right).
			\eeq
			we have
			\beq\label{bg_nocvx1_st}
			\bbe\left[\|\nabla \psi(x_R)\|^2\right] \le {\cal O} (\epsilon)
			\eeq
			where the expectation is also taken with respect to the integer random variable $R$ uniformly distributed over $\{0,1,\ldots, N-1\}$. Furthermore, the total number of noisy evaluations of $f$ and $g$ to obtain such a solution is boudned by 
			\beq\label{total_complx}
			{\cal O} \left(\frac{m(m+n)^2 \log(1/\epsilon)}{\epsilon^2 } \right).
			\eeq
		\end{corollary}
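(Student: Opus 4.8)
The plan is to obtain the corollary as an essentially immediate consequence of Theorem~\ref{theom_main_bsa}, followed by a bookkeeping of the per‑iteration oracle cost. First I would substitute the constant stepsize $\alpha_k = 1/(5L_{1,\psi})$ (which satisfies the requirement $\alpha_k \le 1/(5L_{1,\psi})$ of Theorem~\ref{theom_main_bsa}) into the bound \eqref{bg_nocvx}. Since $\alpha_k$ is constant, $\sum_{k=0}^{N-1}\alpha_k = N/(5L_{1,\psi})$ and $\sum_{k=1}^{N}(\alpha_k+\alpha_k^2) = N(\alpha_k+\alpha_k^2)$, so the right‑hand side of \eqref{bg_nocvx} collapses to
\[
\bbe\big[\|\nabla\psi(x_R)\|^2\big] \;\le\; \frac{5L_{1,\psi}\,(\psi(x_0)-\psi^*)}{N} \;+\; \mathcal{O}(\epsilon)\,(1+\alpha_k) \;=\; \frac{5L_{1,\psi}\,(\psi(x_0)-\psi^*)}{N} \;+\; \mathcal{O}(\epsilon),
\]
because $\alpha_k$ is an absolute constant. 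Choosing $N = \mathcal{O}(1/\epsilon)$ large enough that the first term is also $\mathcal{O}(\epsilon)$ then yields \eqref{bg_nocvx1_st}; that the random index $R$ is now uniform on $\{0,\dots,N-1\}$ is exactly the pmf \eqref{def_R} specialized to constant $\alpha_k$. The only thing to verify here is that the remaining hypotheses of Theorem~\ref{theom_main_bsa} hold, i.e.\ that $t_k,\beta_t$ are as in \eqref{alpha_beta_st}, $s_k$ as in \eqref{alpha_sk}, and the smoothing parameters as in \eqref{smooth_params}; these are assumed by hypothesis, and under them the Hessian‑inverse error (Lemma~\ref{sgd:hessInv}), the inner‑loop error (Lemma~\ref{lemma_sgd}), and the Gaussian‑smoothing bias are all simultaneously $\mathcal{O}(\epsilon)$.

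Next I would count the noisy evaluations of $f$ and $g$. Each outer iteration $k$ incurs three contributions: (i) the inner loop of $t_k = \mathcal{O}\!\big(\tfrac{m}{\epsilon}\log(m/\epsilon)\big)$ steps, each using $\mathcal{O}(1)$ evaluations of $G$ for the estimate \eqref{sgrady}; (ii) the mini‑batch estimates \eqref{sgradfx0}--\eqref{Hessian_xy1} using $s_k = \mathcal{O}\!\big(\tfrac{n+\sqrt{nm}}{\epsilon}\big)$ samples, each costing $\mathcal{O}(1)$ evaluations of $F$ and $G$; and (iii) the call to Algorithm~\ref{Hinv:alg1} with $T=b_k = \mathcal{O}\!\big(\tfrac{m(m+n)^2}{\epsilon}\log(1/\epsilon)\big)$ iterations (the value mandated by Lemma~\ref{sgd:hessInv} so that $\bbe\|z_T-\bar z\|^2 = \mathcal{O}(\epsilon)$), each using $\mathcal{O}(1)$ evaluations of $F$ and $G$ as seen in \eqref{gradJ:def}. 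Contribution (iii), of order $\mathcal{O}\!\big(\tfrac{m(m+n)^2}{\epsilon}\log(1/\epsilon)\big)$, dominates (i) and (ii), so the per‑iteration cost is $\mathcal{O}\!\big(\tfrac{m(m+n)^2}{\epsilon}\log(1/\epsilon)\big)$; multiplying by $N = \mathcal{O}(1/\epsilon)$ produces the stated total \eqref{total_complx}.

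The main obstacle is not analytical but organizational: one has to make sure the parameter prescriptions inherited from Theorem~\ref{theom_main_bsa} coincide with exactly those under which Lemmas~\ref{sgd:hessInv} and \ref{lemma_sgd} were established (in particular that $b_k$, its stepsize $\gamma$, and the smoothing‑parameter constraints $\mu_1 \le 1/m$, $\eta_2 = \mu_2 \le 1/\sqrt{m+n}$ are all compatible with \eqref{alpha_beta_st}--\eqref{smooth_params}), and then to correctly identify that among the three cost contributions the Hessian‑inverse solve is the bottleneck, which is what converts the $\mathcal{O}(1/\epsilon)$ outer iterations and the $\mathcal{O}\!\big(m(m+n)^2\log(1/\epsilon)/\epsilon\big)$ inner work into the claimed $\mathcal{O}\!\big(m(m+n)^2\log(1/\epsilon)/\epsilon^2\big)$ sample complexity.
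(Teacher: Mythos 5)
Your proposal is correct and follows essentially the same route as the paper: plug the constant stepsize into \eqref{bg_nocvx} to get $\bbe[\|\nabla\psi(x_R)\|^2]\le 5L_{1,\psi}(\psi(x_0)-\psi^*)/N+\mathcal{O}(\epsilon)$, then sum the per-iteration costs $s_k+t_k+b_k$ over the $N=\mathcal{O}(1/\epsilon)$ outer iterations, with the Hessian-inverse solve $b_k=\mathcal{O}(m(m+n)^2\log(1/\epsilon)/\epsilon)$ dominating. Your accounting of which lemma fixes each parameter is, if anything, slightly more explicit than the paper's.
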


		\begin{proof}
			First, note that by \eqref{alpha_N} and \eqref{bg_nocvx1_st}, we have 
			\begin{align}
				\bbe[\|\nabla \psi(x_R)\|^2] \le \frac{5L_{1,\psi}[\psi(x_0) -\psi^*] }{N} +  {\cal O }(\epsilon) = {\cal O }(\epsilon).
			\end{align}
			Furthermore, the total sample complexity of Algorithm~\ref{alg_ZBSA} is bounded by
			\begin{align}
				\sum_{k=1}^N (s_k +t_k+b_k) &=  \sum_{k=1}^N \Bigg[\max(n, \sqrt{mn}) {\cal O}\left(\frac{1}{\epsilon} \right)+ {\cal O}\left(\frac{m}{\epsilon} \log \left(\frac{m}{\epsilon}\right) \right) \\
				&+ m(m+n)^2 {\cal O} \left(\frac{1}{\epsilon}\log(1/\epsilon)\right) \Bigg] \nn \\
				&= {\cal O}\left(\frac{1}{\epsilon^2} \right) \left[\max(n, \sqrt{mn})  + m \log(1/\epsilon) + m(m+n)^2 \log(1/\epsilon) \right]
			\end{align}

		\end{proof}

		Notice that the complexity bound in \eqref{total_complx} is better than the bound of
		${\cal O}\left(\frac{(n+m)^4}{\epsilon^3 }\log \left(\frac{n+m}{\epsilon}\right)\right)$ presented in \cite{AghaGhad25} in terms of dependence on both problem dimension and target accuracy. Indeed, it is optimal (up to a logarithmic factor) in terms of the dependence on $\epsilon$, while cubically depends on the problem dimension which is worse than the optimal linear dependency in the context of stochastic zeroth-order optimization. In the next section, we address this issue by using our zeroth-order estimates in the framework of a fully first-order bilevel optimization algorithms thereby avoiding the need to estimate second-order derivatives of $g$. 
		
\section{Optimal Zeroth-order Bilevel Algorithm}\label{full_opt}
In the previous section, we introduced a fully zeroth-order method based on Gaussian smoothing, leveraging Hessian-inverse approximation techniques. However, when only zeroth-order queries are available, the sample complexity of these techniques scales cubically with the problem dimension, which can be prohibitive in large-scale applications. Motivated by first-order approaches in the literature \cite{kwon2023fully}, in this section we present a zeroth-order method whose sample complexity scales linearly with dimension while preserving the optimal dependence on target accuracy.

The key idea here is a formulation which penalizes the sub-optimality in the lower level:
\begin{align*}
    \mathcal{L}(x,y) = f(x,y) + \lambda(g(x,y) - g^*(x)),
\end{align*}
where $g^*(x) := \min_z g(x,z)$. It has been shown in \cite{kwon2023fully} that for a sufficiently large $\lambda \ge 4L_{1,f} / \lambda_g$, one can obtain an $\mathcal{O}(1/\lambda)$-approximate surrogate of $\psi(x)$.
\begin{lemma} \cite[Lemma 3.1]{kwon2023fully} \label{lemma:fully_my_lemma}
    Let Assumption \ref{fg_assumption} holds, and define
    \begin{align*}
        \mathcal{L}^*(x) := \min_y \mathcal{L}(x,y) = \min_y \max_z f(x,y) + \lambda(g(x,y) - g(x,z)).
    \end{align*} 
    For all $\lambda > L_{1,f}/\lambda_g$ and any $x \in X$, $\nabla \mathcal{L}^*(x)$ satisfies
    \begin{align*}
        \|\nabla \mathcal{L}^*(x) - \nabla \psi(x)\| &\le L_\lambda /\lambda,
    \end{align*}
    where $L_\lambda = \frac{4 L_{0,f} L_{1,g}}{\lambda_g^2} \left( L_{1,f} + \frac{2 L_{0,f} L_{2,g}}{\lambda_g} \right)$.
\end{lemma}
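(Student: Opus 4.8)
The plan is to reduce the whole statement to comparing the two lower-level minimizers, $y^*(x):=\arg\min_y g(x,y)$ and $y^*_\lambda(x):=\arg\min_y \mathcal{L}(x,y)$, to show $\|y^*_\lambda(x)-y^*(x)\|=\mathcal{O}(1/\lambda)$, and then to propagate this through Taylor expansions of $\nabla g$ so that the leading term reconstructs exactly the hypergradient formula \eqref{grad_f}, leaving only $\mathcal{O}(1/\lambda)$ remainders. First I would observe that, since $g(x,\cdot)$ is $\lambda_g$-strongly convex and $\nabla f(x,\cdot)$ is $L_{1,f}$-Lipschitz (Assumption~\ref{fg_assumption}), the map $y\mapsto \mathcal{L}(x,y)=f(x,y)+\lambda\big(g(x,y)-g^*(x)\big)$ is $(\lambda\lambda_g-L_{1,f})$-strongly convex whenever $\lambda> L_{1,f}/\lambda_g$; hence $y^*_\lambda(x)$ is the unique minimizer, $\mathcal{L}^*$ is differentiable, and by Danskin's (envelope) theorem $\nabla\mathcal{L}^*(x)=\nabla_x f(x,y^*_\lambda(x))+\lambda\big(\nabla_x g(x,y^*_\lambda(x))-\nabla g^*(x)\big)$. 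Applying Danskin to the strongly convex inner problem gives $\nabla g^*(x)=\nabla_x g(x,y^*(x))$, so $\nabla\mathcal{L}^*(x)=\nabla_x f(x,y^*_\lambda)+\lambda\big(\nabla_x g(x,y^*_\lambda)-\nabla_x g(x,y^*)\big)$.

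Next I would exploit the first-order optimality condition $\nabla_y f(x,y^*_\lambda)+\lambda\nabla_y g(x,y^*_\lambda)=0$ together with $\nabla_y g(x,y^*)=0$. Pairing the difference $\lambda\big(\nabla_y g(x,y^*_\lambda)-\nabla_y g(x,y^*)\big)=-\nabla_y f(x,y^*_\lambda)$ with $y^*_\lambda-y^*$ and using $\lambda_g$-strong convexity of $g(x,\cdot)$ and $\|\nabla_y f\|\le L_{0,f}$ yields $\|y^*_\lambda(x)-y^*(x)\|\le L_{0,f}/(\lambda\lambda_g)$. More importantly, writing both $\nabla g$-differences in integral (mean-value) form along the segment $[y^*,y^*_\lambda]$, there exist averaged Hessian blocks $\bar H_{xy},\bar H_{yy}$ of $g$ (with $\bar H_{yy}\succeq \lambda_g I$, hence invertible) such that $\nabla_x g(x,y^*_\lambda)-\nabla_x g(x,y^*)=\bar H_{xy}(y^*_\lambda-y^*)$ and $\nabla_y g(x,y^*_\lambda)-\nabla_y g(x,y^*)=\bar H_{yy}(y^*_\lambda-y^*)$. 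Combining these with the optimality condition gives $y^*_\lambda-y^*=-\tfrac1\lambda \bar H_{yy}^{-1}\nabla_y f(x,y^*_\lambda)$ and therefore the clean identity
\[
\nabla\mathcal{L}^*(x)=\nabla_x f(x,y^*_\lambda)-\bar H_{xy}\bar H_{yy}^{-1}\nabla_y f(x,y^*_\lambda),
\]
which already has the exact structure of \eqref{grad_f}, but with $y^*_\lambda$ in place of $y^*$ and averaged Hessians in place of the Hessians at $y^*$.

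Finally I would subtract \eqref{grad_f} and bound the difference term by term. The $\nabla_x f$ discrepancy is at most $L_{1,f}\|y^*_\lambda-y^*\|$. The Hessian-product discrepancy splits into $\big(\bar H_{xy}-\nabla^2_{xy}g\big)\bar H_{yy}^{-1}\nabla_y f(x,y^*_\lambda)$, $\nabla^2_{xy}g\,\big(\bar H_{yy}^{-1}-(\nabla^2_{yy}g)^{-1}\big)\nabla_y f(x,y^*_\lambda)$, and $\nabla^2_{xy}g\,(\nabla^2_{yy}g)^{-1}\big(\nabla_y f(x,y^*_\lambda)-\nabla_y f(x,y^*)\big)$; I bound these using $\|\bar H_{xy}-\nabla^2_{xy}g\|,\ \|\bar H_{yy}-\nabla^2_{yy}g\|\le L_{2,g}\|y^*_\lambda-y^*\|$ (Hessian-Lipschitzness), the resolvent identity $A^{-1}-B^{-1}=A^{-1}(B-A)B^{-1}$ with $\|\bar H_{yy}^{-1}\|,\|(\nabla^2_{yy}g)^{-1}\|\le 1/\lambda_g$, $\|\nabla^2_{xy}g\|\le L_{1,g}$, $\|\nabla_y f\|\le L_{0,f}$, and $\|\nabla_y f(x,y^*_\lambda)-\nabla_y f(x,y^*)\|\le L_{1,f}\|y^*_\lambda-y^*\|$. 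Substituting $\|y^*_\lambda-y^*\|\le L_{0,f}/(\lambda\lambda_g)$ makes every term $\mathcal{O}(1/\lambda)$, and consolidating the constants (using $L_{1,g}\ge\lambda_g$) gives the stated bound with $L_\lambda=\frac{4L_{0,f}L_{1,g}}{\lambda_g^2}\big(L_{1,f}+\frac{2L_{0,f}L_{2,g}}{\lambda_g}\big)$. The step I expect to be the main obstacle is precisely this second-order bookkeeping: keeping the averaged-Hessian representation rather than a crude two-point estimate is what makes the leading $-\nabla^2_{xy}g\,(\nabla^2_{yy}g)^{-1}\nabla_y f$ term cancel exactly; a looser expansion would destroy the cancellation and leave an $\mathcal{O}(1)$ error.
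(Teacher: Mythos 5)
Your argument is correct, but note that the paper does not actually prove Lemma~\ref{lemma:fully_my_lemma}: it imports the statement wholesale from \cite[Lemma~3.1]{kwon2023fully}, so there is no in-paper proof to match against. Your derivation is a sound self-contained substitute, and it is methodologically the same as what the paper does prove in the neighbouring Lemma~\ref{lemma:yz:diffs}: compare the two stationarity conditions $\nabla_y f(x,y^*_\lambda)+\lambda\nabla_y g(x,y^*_\lambda)=0$ and $\nabla_y g(x,y^*)=0$, extract $\|y^*_\lambda-y^*\|=\mathcal{O}(L_{0,f}/(\lambda\lambda_g))$ from strong monotonicity, and expand $\nabla g$ to second order so that the finite difference $\lambda\bigl(\nabla_x g(x,y^*_\lambda)-\nabla_x g(x,y^*)\bigr)$ reproduces $-\nabla^2_{xy}g\,[\nabla^2_{yy}g]^{-1}\nabla_y f$ up to $\mathcal{O}(1/\lambda)$. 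The one refinement worth highlighting is exactly the one you flag: using the exact averaged-Hessian (integral mean value) representation $\bar H_{xy}\bar H_{yy}^{-1}$ rather than a two-point Taylor estimate is what makes the leading term cancel identically against \eqref{grad_f}, leaving only terms proportional to $\|y^*_\lambda-y^*\|$; the resolvent identity then converts the $\bar H_{yy}^{-1}$ versus $[\nabla^2_{yy}g(x,y^*)]^{-1}$ discrepancy into an $L_{2,g}\|y^*_\lambda-y^*\|/\lambda_g^2$ term. Your constant accounting (using $\|\nabla_y f\|\le L_{0,f}$, $\|\nabla^2_{xy}g\|\le L_{1,g}$, $\lambda_g\le L_{1,g}$) lands comfortably inside the stated $L_\lambda/\lambda$, in fact with a factor of about two to spare, so the bound is verified.
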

Thus, with $\lambda = \mathcal{O} (\epsilon^{-1})$, we can focus on minimizing $\mathcal{L}^*(x)$ instead of $\psi(x)$ directly. To see how this surrogate formulation yields an algorithm that relies solely on first-order derivatives (thereby improving $\mathcal{O}(n+m)$-dependency), note that our surrogate objective can be rewritten as:
\begin{align*}
    \min_x \mathcal{L}^*(x) &= \min_x  (\min_y f(x,y) + \lambda g(x,y) - \min_z  \lambda g(x,z)) \\
    &= \min_x f(x,y^*(x)) + \lambda g(x,y^*(x)) - \lambda g(x,z^*(x)),
\end{align*}
where in this notation we define
\begin{align}
    y^*(x) &:= \arg\min_y \lambda^{-1} f(x,y) + g(x,y),\label{y_first} \\
    z^*(x) &:= \arg\min_z g(x,z).\label{z_first}
\end{align}
Moreover, $\frac{1}{\lambda} f(x,y) + g(x,y)$ is $(3\lambda_g/4)$-strongly convex with $\lambda > 4L_{1,f}/\lambda_g$, and hence, by Danskin's Theorem, we can verify that the gradient of the surrogate objective consists only of first-order derivatives:
\begin{align*}
    \nabla \mathcal{L}^*(x) = \nabla_x f(x,y^*(x)) + \lambda \nabla_x g(x,y^*(x)) - \lambda \nabla_x g(x,z^*(x)).
\end{align*}
To see how this expression approximates $\nabla \psi(x)$, we state the following lemma.
\begin{lemma}\label{lemma:yz:diffs}
    Let $\lambda \ge 4L_{1,f} / \lambda_g$. Then for any $x \in X$, we have
    \begin{align*}
        \|y^*(x) - z^*(x)\| & \le \frac{2L_{0,f}}{\lambda_g \lambda}, \\
        y^*(x) - z^*(x) &= -\frac{1}{\lambda} \nabla_{yy}^2 g(\bar{x},z^*(x))^{-1} \nabla_y f(x,y^*(x)) + \mathcal{O} \left( \frac{L_{2,g} L_{0,f}^2}{\lambda_g^3 \lambda^2} \right).
    \end{align*}
\end{lemma}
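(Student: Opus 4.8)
The plan is to derive both claims directly from the two first-order optimality conditions
$\tfrac{1}{\lambda}\nabla_y f(x,y^*(x)) + \nabla_y g(x,y^*(x)) = 0$ and $\nabla_y g(x,z^*(x)) = 0$, combined with the $\lambda_g$-strong convexity of $g(x,\cdot)$ and the global Lipschitz-Hessian bound $L_{2,g}$ on $g$ from Assumption~\ref{fg_assumption}. (I read the $\bar x$ appearing in the statement as the same point $x$; nothing in the argument depends on distinguishing them.)

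For the norm bound, I would subtract the two optimality conditions and pair the difference with $y^*(x)-z^*(x)$. Strong convexity of $g(x,\cdot)$ gives $\lambda_g\|y^*(x)-z^*(x)\|^2 \le \langle \nabla_y g(x,y^*(x)) - \nabla_y g(x,z^*(x)),\, y^*(x)-z^*(x)\rangle = -\tfrac{1}{\lambda}\langle \nabla_y f(x,y^*(x)),\, y^*(x)-z^*(x)\rangle$. Applying Cauchy--Schwarz and the gradient bound $\|\nabla_y f\|\le L_{0,f}$ (a consequence of $f$ being $L_{0,f}$-Lipschitz) and dividing out one factor of $\|y^*(x)-z^*(x)\|$ yields $\|y^*(x)-z^*(x)\|\le L_{0,f}/(\lambda_g\lambda)$, which sits inside the claimed constant $2L_{0,f}/(\lambda_g\lambda)$. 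Equivalently one may pair with the gradient of $\tfrac{1}{\lambda}f+g$, which is $(3\lambda_g/4)$-strongly convex for $\lambda\ge 4L_{1,f}/\lambda_g$; that route gives $\|y^*(x)-z^*(x)\|\le 4L_{0,f}/(3\lambda_g\lambda)$, still within the stated bound.

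For the expansion, I would Taylor-expand $\nabla_y g(x,\cdot)$ about $z^*(x)$: $\nabla_y g(x,y^*(x)) = \nabla_y g(x,z^*(x)) + \nabla_{yy}^2 g(x,z^*(x))\,(y^*(x)-z^*(x)) + R$, where the second-order remainder obeys $\|R\|\le \tfrac{L_{2,g}}{2}\|y^*(x)-z^*(x)\|^2$ by the Lipschitz-Hessian assumption. Substituting $\nabla_y g(x,z^*(x))=0$ and $\nabla_y g(x,y^*(x)) = -\tfrac{1}{\lambda}\nabla_y f(x,y^*(x))$, then inverting $\nabla_{yy}^2 g(x,z^*(x))\succeq \lambda_g I$, gives $y^*(x)-z^*(x) = -\tfrac{1}{\lambda}\nabla_{yy}^2 g(x,z^*(x))^{-1}\nabla_y f(x,y^*(x)) - \nabla_{yy}^2 g(x,z^*(x))^{-1}R$. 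The last term is bounded in norm by $\tfrac{1}{\lambda_g}\cdot\tfrac{L_{2,g}}{2}\|y^*(x)-z^*(x)\|^2$, and inserting the norm bound from the previous step turns it into $\mathcal{O}\!\left(L_{2,g}L_{0,f}^2/(\lambda_g^3\lambda^2)\right)$, exactly the claimed error term.

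I do not anticipate a genuine obstacle: this is a standard perturbation/implicit-function-type estimate around $z^*(x)$. The only points requiring care are (i) tracking which strong convexity modulus is used so the leading constant in the norm bound is honest, and (ii) controlling the second-order Taylor remainder in Euclidean/operator norm uniformly over $x\in X$, which is precisely what the global Lipschitz-Hessian constant $L_{2,g}$ in Assumption~\ref{fg_assumption} supplies.
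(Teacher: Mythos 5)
Your proof is correct and follows essentially the same route as the paper: the second-order Taylor expansion of $\nabla_y g(x,\cdot)$ about $z^*(x)$, substitution of the two optimality conditions, and inversion of $\nabla_{yy}^2 g(x,z^*(x))\succeq\lambda_g I$ are exactly the paper's argument for the second relation. The only difference is that for the first bound the paper simply cites Lemma 3.2 of \cite{kwon2023fully}, whereas you rederive it from strong monotonicity of $\nabla_y g(x,\cdot)$ and Cauchy--Schwarz, obtaining the slightly sharper constant $L_{0,f}/(\lambda_g\lambda)$; this is a valid, self-contained substitute.
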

\begin{proof}
    See Section \ref{sec:3proof1} of the Supplement.
\end{proof}
Thus, $\lambda (\nabla_x g(x,y^*(x)) - \nabla_x g(x,z^*(x)))$ is an approximation of the second-order part in $\nabla \psi(x)$ roughly through finite differentiation with a large enough $\lambda > 0$. Remaining challenge is that, as in the previous section, we clearly cannot access the exact $y^*(x), z^*(x)$; which we resolved using double-loop iterations (it can also be resolved using two-timescale stepsizes). 

Our fully zeroth-order algorithm based on this formulation, motivated by  \cite{kwon2023fully}, is described in Algorithm \ref{alg_ZBSA2}. In the inner loop of this algorithm, we apply two SGD-type of updates to get approximate solutions of \eqnok{y_first} and \eqnok{z_first}, which is similar to that of Algorithm~\ref{alg_ZBSA} in which we just have one updating sequence. Also, in the outer loop, we apply a projected SGD-type update to the sequence $\{x_k\}$ for minimizing the modified upper objective function, which unlike the one in Algorithm~\ref{alg_ZBSA}, does not require any estimate of Hessian matrices or Hessian inverse-vector product.

\begin{algorithm}
	\caption{The Zeroth-order Mini-batch Double-loop Stochastic Bilevel Approximation First-Order Method}
	\label{alg_ZBSA2}
	\begin{algorithmic}

    \STATE Input:
    Smoothing parameters $\eta, \mu > 0$, a penalty parameter $\lambda > 0$, a maximum number of iteration $N \ge 1$, an initial solution $x_0 \in X$, $y_0, z_{0} \in \bbr^m$, nonnegative sequences $\{\alpha_k\}_{k \ge 0}$, $\{\beta_t\}_{t \ge 0}$, and integer sequences $\{t_k\}_{k \ge 0}$ and $\{b_k\}_{k \ge 0}$.
    
    
    {\bf For $k=0,1,\ldots$, $N$:}
    
    \vgap
    
    {\addtolength{\leftskip}{0.2in}

    {\bf For $t=0,1,\ldots, t_k-1$:}
    
    }
    \vgap
    {\color{black}
    {\addtolength{\leftskip}{0.4in}
    
    \STATE Compute partial gradient approximations of $G_{0,\mu}$ w.r.t $y$:\vspace{-.2cm}
    \begin{align}
    \tilde \nabla_y G_{0,\mu}^t  &= \left[ \frac{G(x_k, y_t+\mu v^{(1)}_t, \zeta^{(1)}_t) - G(x_k, y_t,\zeta^{(1)}_t)}{\mu}\right] v^{(1)}_t, \nonumber \\
    \tilde \nabla_y G_{0,\mu}^t &= \left[ \frac{G(x_k, z_t+\mu v^{(1)}_t, \zeta^{(1)}_t) - G(x_k, z_t,\zeta^{(1)}_t)}{\mu}\right] v^{(1)}_t, \label{sgrady_first_order_g}
    \end{align}
    where $\zeta^{(1)}_t$ and $v^{(1)}_t$ are i.i.d samples from $\zeta$ and Gaussian distribution, respectively, and\vspace{-.2cm}
    \beq
    \tilde \nabla_y F_{0,\mu}^t = \left[ \frac{F(x_k, z_t + \mu v^{(1)}_t, \xi^{(1)}_t) - F(x_k, z_t,\xi^{(1)}_t)}{\mu}\right] v^{(1)}_t
    \eeq
    where $\xi^{(1)}_t$ is another i.i.d. sample from $\xi$. Set
    \vspace{-.2cm}
    \begin{align}\notag 
    z_{t+1} &= z_t- \beta_t \tilde \nabla_y G_{0,\mu}^t, \\ \label{eq:OptJointUpdate}
    y_{t+1} &= y_t- \beta_t (\lambda^{-1} \tilde \nabla_y F_{0,\mu}^t + \tilde \nabla_y G_{0,\mu}^t).
    \end{align}
    
    }
    
    {\addtolength{\leftskip}{0.2in}
    {\bf End}
    
    \STATE  Set $\bar y_k=y_{t_k}, \bar z_k = z_{t_k}$ and compute the partial gradient approximations:\vspace{-.2cm}
    \begin{align}
    \tilde \nabla_x F_{\eta,0}^{k,z}  &= \frac{1}{s_k} \sum_{i=1}^{s_k} \tilde \nabla_x F_{\eta,0}^{k,z,i}, \label{sgradfx02}\\
    \tilde \nabla_x F_{\eta,0}^{k,z,i}  &= \left[ \frac{F(x_k+\eta u^{(2)}_{k,i}, \bar z_k, \xi_{k,i}) - F(x_k, \bar z_k,\xi_{k,i})}{\eta}\right] u^{(2)}_{k,i},\label{sgradfx2}\\
    \tilde \nabla_{x} G_{\eta,0}^{k,y} &= \frac{1}{s_k} \sum_{i=1}^{s_k} \tilde \nabla_{x} G_{\eta,0}^{k,y,i}, \quad \tilde \nabla_{x} G_{\eta,0}^{k,z} = \frac{1}{s_k} \sum_{i=1}^{s_k} \tilde \nabla_{x} G_{\eta,0}^{k,z,i}  \label{sgradgx02}, \\
    \tilde \nabla_x G_{\eta,0}^{k,y,i}  &= \left[ \frac{G(x_k + \eta u^{(2)}_{k,i}, \bar y_k, \zeta_{k,i}) - G(x_k, \bar y_k,\zeta_{k,i})}{\eta}\right] u^{(2)}_{k,i}, \label{sgradgx21} \\
    \tilde \nabla_x G_{\eta,0}^{k,z,i}  &= \left[ \frac{G(x_k + \eta u^{(2)}_{k,i}, \bar z_k, \zeta_{k,i}) - G(x_k, \bar z_k,\zeta_{k,i})}{\eta}\right] u^{(2)}_{k,i},\label{sgradgx22}
    \end{align}
    where $\zeta_{k,i}$, $\xi_{k,i}$, and $(u^{(2)}_k,v^{(2)}_{k,i})$ are i.i.d samples from $\zeta$, $\xi$, and Gaussian distribution, respectively. Set\vspace{-.2cm}
    \beqa
    \tilde \nabla \mathcal{L}_{\eta,\mu}^k \equiv  \tilde \nabla_x F_{\eta,0}^{k,z}  + \lambda (\tilde \nabla_{x} G_{\eta,0}^{k,z} - \tilde \nabla_{x} G_{\eta,0}^{k,y}), \label{hypergrad_first_orde}
    \eeqa
    and then
    \beq \label{def_xk_st2}
    x_{k+1} = \arg\min_{x \in X} \left\{\langle \tilde \nabla \mathcal{L}_{\eta,\mu}^k,x -x_k \rangle + \frac{1}{2 \alpha_k}\|x-x_k\|^2 \right\}.
    \eeq
    
    }
}

{\bf End}
	\end{algorithmic}
\end{algorithm}

To proceed with a convergence analysis of Algorithm~\ref{alg_ZBSA2}, we first need to generalize the results in Lemma~\ref{lemma:fully_my_lemma} and Lemma~\ref{lemma:yz:diffs} to the zeorht-order setting.
We start with defining counterparts to \eqnok{y_first} and \eqnok{z_first} as follows. 
\begin{align*}
    y_{\mu}^*(x) &:= \arg\min_{y} \lambda^{-1} f_{0,\mu}(x,y) + g_{0,\mu}(x,y) \\
    &= \arg\min_y \mathbb{E}[\lambda^{-1} F(x, y+\mu v, \xi) + G(x, y+\mu v, \zeta)], \\
    z_{\mu}^*(x) &:= \arg\min_{z} g_{0,\mu}(x,z) = \arg\min_z \mathbb{E}[G(x, z+\mu v, \zeta)].
\end{align*}
We also define the zeroth-order approximation of penalty formulation:
\begin{align*}
    \nabla \mathcal{L}_{\eta,\mu}^* (x) := \nabla_x f_{\eta,0}(x,y_\mu^*(x)) + \lambda \nabla_x g_{\eta,0} (x,y_\mu^*(x)) - \lambda \nabla_x g_{\eta,0}(x,z_\mu^*(x)).
\end{align*}
We can now establish the following results on the biases of the aforementioned zeroth-order approximations.
\begin{lemma}
    \label{lemma:zeroth_first_order_approx_bias}
    Let $\lambda \ge 4 L_{1,f} / \lambda_g$. For all $\bar{x} \in X$, solutions from zeroth-order approximation have biases as the following:
    \begin{align*}
        &\max\left(\|z^*(x) - z^*_{\mu}(x)\|^2, \|y^*(x) - y^*_{\mu}(x)\|^2\right) \le \frac{L_{1,g}}{\lambda_g }\mu^2 m.
    \end{align*}
    Furthermore, we have
    \begin{align*}
        \| \nabla \mathcal{L}_{\eta,\mu}^*(x) - \nabla \mathcal{L}^*(x) \| &\le \mathcal{O} \left(\eta n^{3/2} + \mu m^{3/2} + \frac{1}{\lambda} \right).
    \end{align*}
\end{lemma}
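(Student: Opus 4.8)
The plan is to decompose each target bound into the contribution coming from the lower-level optimizers (i.e., $\|y^* - y^*_\mu\|$ and $\|z^* - z^*_\mu\|$) and the contribution coming from the Gaussian smoothing of the outer partial gradients $\nabla_x f$ and $\nabla_x g$. For the first inequality, I would start from the first-order optimality conditions for $z^*_\mu(x)$ and $z^*(x)$: $z^*_\mu(x)$ minimizes $g_{0,\mu}(x,\cdot)$ and $z^*(x)$ minimizes $g(x,\cdot)$, and $g(x,\cdot)$ is $\lambda_g$-strongly convex by Assumption~\ref{fg_assumption}(b). Using strong convexity at $z^*(x)$ gives
\[
\tfrac{\lambda_g}{2}\|z^*_\mu(x) - z^*(x)\|^2 \le g(x,z^*_\mu(x)) - g(x,z^*(x)),
\]
and then I would relate $g$ and $g_{0,\mu}$ via the standard smoothing estimate $|g_{0,\mu}(x,y) - g(x,y)| \le \tfrac{L_{1,g}}{2}\mu^2 m$ (Nesterov-type bound for Gaussian smoothing in $m$ variables), together with the fact that $z^*_\mu$ is the \emph{minimizer} of $g_{0,\mu}(x,\cdot)$ so $g_{0,\mu}(x,z^*_\mu(x)) \le g_{0,\mu}(x,z^*(x))$. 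Chaining these inequalities collapses the $g(x,z^*_\mu) - g(x,z^*)$ gap to something of order $L_{1,g}\mu^2 m$, yielding the claimed $\|z^*(x) - z^*_\mu(x)\|^2 \le \frac{L_{1,g}}{\lambda_g}\mu^2 m$. The $y$ case is identical after noting that $\lambda^{-1} f + g$ is still $\lambda_g$-strongly convex (indeed $(3\lambda_g/4)$-strongly convex as already observed) and that its Gaussian smoothing is $\lambda^{-1} f_{0,\mu} + g_{0,\mu}$ with gradient-Lipschitz constant at most $\lambda^{-1} L_{1,f} + L_{1,g} = \mathcal{O}(L_{1,g})$ for $\lambda$ large, so the same argument applies with a harmless constant change.

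For the second inequality, I would write
\[
\nabla \mathcal{L}_{\eta,\mu}^*(x) - \nabla \mathcal{L}^*(x)
= \big(\nabla_x f_{\eta,0}(x,y_\mu^*) - \nabla_x f(x,y^*)\big)
+ \lambda\big(\nabla_x g_{\eta,0}(x,y_\mu^*) - \nabla_x g(x,y^*)\big)
- \lambda\big(\nabla_x g_{\eta,0}(x,z_\mu^*) - \nabla_x g(x,z^*)\big),
\]
and split each parenthesis further into a ``smoothing error'' term, $\nabla_x h_{\eta,0}(x,w) - \nabla_x h(x,w)$ at a fixed $w$, plus an ``argument-perturbation'' term, $\nabla_x h(x,w_\mu^*) - \nabla_x h(x,w^*)$. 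The smoothing-error terms are each $\mathcal{O}(\eta\, n^{3/2})$ by the Gaussian-smoothing gradient estimate (the bound $\|\nabla h_{\eta,0}(x,w) - \nabla h(x,w)\| \le \tfrac{\eta}{2} L_{1,h}(n+3)^{3/2}$, or equivalently the bias bounds already recorded in the style of Proposition~\ref{propNestApprox_stch}), which after multiplying the $g$-terms by $\lambda$ and recalling $\lambda = \mathcal{O}(\epsilon^{-1})$ would naively blow up — so the key point is that these are \emph{bias} (not variance) terms and we choose $\eta$ small enough (scaling like $1/\lambda$, i.e. $\eta = \mathcal{O}(\epsilon)$) to kill the $\lambda$ factor, which is exactly why the stated bound carries a lone additive $\frac{1}{\lambda}$ rather than $\lambda\eta n^{3/2}$; alternatively one absorbs it since the final algorithmic choice of $\eta$ will be dictated by this. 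The argument-perturbation terms are controlled by Lipschitz continuity of $\nabla_x f$ and $\nabla_x g$ (Assumption~\ref{fg_assumption}) times $\|y_\mu^* - y^*\|$ or $\|z_\mu^* - z^*\|$, which by the first part are $\mathcal{O}(\mu\sqrt{m})$; the crucial cancellation is that the two $\lambda\,\nabla_x g$ argument-perturbation terms partially cancel — writing $\nabla_x g(x,y_\mu^*) - \nabla_x g(x,z_\mu^*) - (\nabla_x g(x,y^*) - \nabla_x g(x,z^*))$ and using $\|(y_\mu^* - z_\mu^*) - (y^* - z^*)\| \le \|y_\mu^* - y^*\| + \|z_\mu^* - z^*\| = \mathcal{O}(\mu\sqrt m)$ together with $\|y^* - z^*\| = \mathcal{O}(1/\lambda)$ from Lemma~\ref{lemma:yz:diffs}, plus Lipschitzness of $\nabla^2_{xy} g$, keeps the $\lambda \cdot \mathcal{O}(\mu\sqrt m)$ contribution in check — or more simply one notes $\lambda \cdot \mu^2 m$-type terms are dominated once $\mu = \mathcal{O}(\sqrt\epsilon)$. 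Collecting: smoothing errors give $\mathcal{O}(\eta n^{3/2})$ (after the $\lambda$ is cancelled by the choice of $\eta$, or stated as $\mathcal{O}(\lambda \eta n^{3/2})$ which we then fold into $\eta n^{3/2}$ under the eventual parameter setting), argument perturbations give $\mathcal{O}(\mu m^{3/2})$, and the residual from Lemma~\ref{lemma:yz:diffs} together with the $\mathcal{L}^* $ vs.\ $\psi$ slack gives $\mathcal{O}(1/\lambda)$.

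The main obstacle is the bookkeeping around the $\lambda$ factor multiplying the $g$-smoothing terms: a cavalier triangle-inequality split produces $\lambda\eta n^{3/2}$ and $\lambda\mu^2 m$, which are \emph{not} $\mathcal{O}(\eta n^{3/2} + \mu m^{3/2} + 1/\lambda)$ unless one is careful. I expect the clean way is to exploit the \emph{difference} structure $\nabla_x g_{\eta,0}(x,y_\mu^*) - \nabla_x g_{\eta,0}(x,z_\mu^*)$ versus $\nabla_x g(x,y^*) - \nabla_x g(x,z^*)$ before taking norms — so that each $\lambda$ is paired with a quantity of size $\|y^* - z^*\| = \mathcal{O}(1/\lambda)$ or a second-order smoothing remainder, keeping products bounded — and to lean on the fact that the smoothing parameters $\eta,\mu$ are ours to choose and the statement is deliberately phrased with $\eta$ and $\mu$ appearing to the first power precisely because the later analysis sets $\eta,\mu$ polynomially small in $\epsilon$ relative to $\lambda = \mathcal{O}(1/\epsilon)$. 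Verifying that the Gaussian-smoothing estimates stated earlier for two blocks $(x,y)$ specialize correctly to the one-block-in-$x$ smoothing $h_{\eta,0}$ used here (i.e., $\mu$-block absent) is a routine but necessary check I would dispatch by invoking Proposition~\ref{propNestApprox_stch} (or its deterministic ancestor) with $\mu = 0$.
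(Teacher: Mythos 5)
Your first inequality is fine: the strong-convexity-plus-function-value-smoothing argument is a correct, self-contained alternative to the paper's route (the paper simply invokes Proposition~\ref{zeroth-order bilevel}, i.e.\ \cite{AghaGhad25}, with the $x$-direction smoothing set to zero), and it recovers the stated bound up to an absolute constant in the $y$-case.

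The second inequality has a genuine gap. Your decomposition is the right one, but your control of the $\lambda$-weighted $g$-terms does not close. After Taylor expansion the dominant piece is $\lambda\,\nabla_{xy}^2 g(x,z_\mu^*)\bigl[(y_\mu^*-z_\mu^*)-(y^*-z^*)\bigr]$, and you bound $\|(y_\mu^*-z_\mu^*)-(y^*-z^*)\|$ by the triangle inequality as $\|y_\mu^*-y^*\|+\|z_\mu^*-z^*\|=\mathcal{O}(\mu\sqrt m)$. That leaves a term of order $\lambda\mu\sqrt m$, which is not $\mathcal{O}(\eta n^{3/2}+\mu m^{3/2}+1/\lambda)$; the first part of Lemma~\ref{lemma:yz:diffs} ($\|y^*-z^*\|=\mathcal{O}(1/\lambda)$) only rescues the cross term $[\nabla_{xy}^2 g(x,z_\mu^*)-\nabla_{xy}^2 g(x,z^*)](y^*-z^*)$, not this one. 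Your fallback --- that ``$\lambda\mu^2 m$-type terms are dominated once $\mu=\mathcal{O}(\sqrt\epsilon)$'' --- misidentifies the offending term (it is first order in $\mu$) and would in any case turn an unconditional lemma into a parameter-contingent one; under the paper's eventual choices $\lambda=\Theta(\epsilon^{-1/2})$ and $\mu=\Theta(\sqrt{\epsilon/m^3})$, the quantity $\lambda\mu\sqrt m$ is $\Theta(1/m)$, which is not small enough. The missing ingredient is the second display of Lemma~\ref{lemma:yz:diffs} together with its smoothed counterpart (Lemma~\ref{lemma:yz_diff_mu} in the supplement): $\lambda(y^*-z^*)=-[\nabla_{yy}^2 g(x,z^*)]^{-1}\nabla_y f(x,y^*)+\mathcal{O}(1/\lambda)$, and likewise for $(y_\mu^*,z_\mu^*)$ with $g_{0,\mu},f_{0,\mu}$. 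This converts $\lambda[(y_\mu^*-z_\mu^*)-(y^*-z^*)]$ into a difference of two $\mathcal{O}(1)$ Hessian-inverse--gradient products plus $\mathcal{O}(1/\lambda)$, and that difference is then bounded by first- and second-derivative smoothing errors of order $\mathcal{O}(\eta n^{3/2}+\mu m^{3/2})$ --- the latter requiring the Hessian-level smoothing bounds of Lemma~\ref{lemma:second_deriv_approx_error}, which your outline also omits --- with no surviving factor of $\lambda$. Without this step the claimed bound is not established.
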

\begin{proof}
    See Section \ref{sec:3proof2} of the Supplement. 
\end{proof}
Note that the surrogate objective we plan to minimize, $\mathcal{L}^*(x)$,
 is solved using zeroth-order approximation estimators, and the above lemma guarantees that the resulting solution is also an approximate stationary point of the original problem.

\subsection{Outer Loop Analysis}
We start with the analysis of the outer-loop. Let
\begin{align*}
q^*_\mu(x) := y_\mu^*(x) - z_\mu^*(x), \quad
    q_k := \bar{y}_k - \bar{z}_k.
\end{align*}
We first need to control the biases associated with $\tilde \nabla \mathcal{L}^k_{\eta,\mu}$ in \eqref{hypergrad_first_orde}. To do so, we present the following lemma:
\begin{lemma}
    \label{lemma:inexact_first_bias}
    Define the expectation of the hyper-gradient estimator in \eqref{hypergrad_first_orde} as
    \begin{align*}
        \bar{\nabla} \mathcal{L}_{\eta,\mu}^k := \nabla_x f_{\eta,0} (x_k,  \bar{y}_k) + \lambda (\nabla_x g_{\eta,0}(x_k, \bar{y}_k) - \nabla_x g_{\eta,0}(x_k, \bar{z}_k)).
    \end{align*}
    Then, for any $k$, it holds that
    \begin{align*}
        \| \bar{\nabla} \mathcal{L}_{\eta,\mu}^k - \nabla \mathcal{L}_{\eta,\mu}^*(x_k) \| &\le (L_{1,f} + \lambda L_{2,g} \|q_\mu^*(x_k)\|) \|\bar{y}_k - y_\mu^*(x_k)\| \\
        &\quad + \lambda L_{1,g} \|q_k - q_\mu^*(x_k)\| + \lambda L_{2,g} \|q_k\|^2 + \lambda L_{2,g} \|q_\mu^* (x_k)\|^2. 
    \end{align*}
\end{lemma}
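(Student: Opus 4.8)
Since $y_\mu^*(x_k)$ and $z_\mu^*(x_k)$ are deterministic functions of $x_k$, this is a pointwise inequality and the argument is Taylor-expansion bookkeeping. Throughout I will use that the $x$-smoothed maps $\nabla_x f_{\eta,0}(x,\cdot)$, $\nabla_x g_{\eta,0}(x,\cdot)$ and $\nabla^2_{xy} g_{\eta,0}(x,\cdot)$ are averages over the $x$-perturbation of $\nabla_x f$, $\nabla_x g$, $\nabla^2_{xy} g$, so by Jensen and Assumption~\ref{fg_assumption} they inherit the gradient Lipschitz constants $L_{1,f}$, $L_{1,g}$, the operator bound $\|\nabla^2_{xy} g_{\eta,0}\|\le L_{1,g}$, and the Hessian Lipschitz constant $L_{2,g}$. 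First I would split $\bar\nabla\mathcal{L}_{\eta,\mu}^k - \nabla\mathcal{L}_{\eta,\mu}^*(x_k)$ into the $f$-part $\nabla_x f_{\eta,0}(x_k,\bar y_k) - \nabla_x f_{\eta,0}(x_k,y_\mu^*(x_k))$, immediately bounded by $L_{1,f}\|\bar y_k - y_\mu^*(x_k)\|$, plus $\lambda D_k$ with $D_k := [\nabla_x g_{\eta,0}(x_k,\bar y_k) - \nabla_x g_{\eta,0}(x_k,\bar z_k)] - [\nabla_x g_{\eta,0}(x_k,y_\mu^*(x_k)) - \nabla_x g_{\eta,0}(x_k,z_\mu^*(x_k))]$.

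For $D_k$ the key move is to Taylor-expand each of the two inner differences around its ``$y$-argument'': writing $\bar z_k = \bar y_k - q_k$, $z_\mu^*(x_k) = y_\mu^*(x_k) - q_\mu^*(x_k)$ and using the $L_{2,g}$-Lipschitzness of $\nabla^2_{xy} g_{\eta,0}(x_k,\cdot)$,
\[
\nabla_x g_{\eta,0}(x_k,\bar y_k) - \nabla_x g_{\eta,0}(x_k,\bar z_k) = \nabla^2_{xy} g_{\eta,0}(x_k,\bar y_k)\,q_k + r_k, \qquad \|r_k\|\le \tfrac{L_{2,g}}{2}\|q_k\|^2,
\]
together with the analogous identity at $(y_\mu^*(x_k),z_\mu^*(x_k))$ with remainder $r_\mu^*$, $\|r_\mu^*\|\le \tfrac{L_{2,g}}{2}\|q_\mu^*(x_k)\|^2$. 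Subtracting the two identities and inserting $\pm\,\nabla^2_{xy} g_{\eta,0}(x_k,\bar y_k)\,q_\mu^*(x_k)$ writes the leading part of $D_k$ as $\nabla^2_{xy} g_{\eta,0}(x_k,\bar y_k)(q_k - q_\mu^*(x_k)) + [\nabla^2_{xy} g_{\eta,0}(x_k,\bar y_k) - \nabla^2_{xy} g_{\eta,0}(x_k,y_\mu^*(x_k))]q_\mu^*(x_k)$, which I would bound respectively by $L_{1,g}\|q_k - q_\mu^*(x_k)\|$ (operator bound on $\nabla^2_{xy} g_{\eta,0}$) and $L_{2,g}\|\bar y_k - y_\mu^*(x_k)\|\,\|q_\mu^*(x_k)\|$ (Hessian Lipschitzness). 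Adding $r_k - r_\mu^*$, multiplying by $\lambda$, adding the $f$-part, and merging the $\lambda L_{2,g}\|q_\mu^*(x_k)\|\,\|\bar y_k - y_\mu^*(x_k)\|$ term into the coefficient of $\|\bar y_k - y_\mu^*(x_k)\|$ gives exactly the claimed inequality (the constants $\tfrac12$ being absorbed).

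The one genuine subtlety — the step I would flag — is the choice of Taylor base point: expanding $\nabla_x g_{\eta,0}(x_k,\bar y_k) - \nabla_x g_{\eta,0}(x_k,\bar z_k)$ around $\bar z_k$ instead of $\bar y_k$ (and likewise for the starred difference) leaves a term $L_{2,g}\|\bar z_k - z_\mu^*(x_k)\|\,\|q_\mu^*(x_k)\|$, which one can still control through $\|\bar z_k - z_\mu^*(x_k)\|\le\|\bar y_k - y_\mu^*(x_k)\| + \|q_k - q_\mu^*(x_k)\|$ but which spoils the clean grouping; expanding around the ``$y$-argument'' is what makes the leading Hessian be evaluated at $\bar y_k$ (close to $y_\mu^*(x_k)$) and the remainder genuinely $O(\|q_k\|^2)$. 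Everything else is the triangle inequality together with the averaging/smoothing facts recalled at the start.
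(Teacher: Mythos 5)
Your proposal is correct and follows essentially the same route as the paper's proof: split off the $f$-part via $L_{1,f}$-Lipschitzness, Taylor-expand each difference $\nabla_x g_{\eta,0}(x,\cdot)-\nabla_x g_{\eta,0}(x,\cdot)$ around its $y$-argument with an $L_{2,g}\|\cdot\|^2$ remainder, and then add-and-subtract a cross term to bound the difference of the leading Hessian-times-$q$ terms by $L_{1,g}\|q_k-q_\mu^*(x_k)\|+L_{2,g}\|q_\mu^*(x_k)\|\,\|\bar y_k-y_\mu^*(x_k)\|$. The only cosmetic difference is which of the two Hessian evaluation points you keep in the $\|q_k-q_\mu^*(x_k)\|$ term, which does not affect the bound.
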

\begin{proof}
    See Section \ref{sec:3proof3} of the Supplement. 
\end{proof}
Next, we control the variance of stochastic estimators $\tilde \nabla \mathcal{L}_{\eta,\mu}^k$ in the outer-loop update:
\begin{lemma}
    \label{lemma:G_diff_var}
    For mean-squared difference of gradient estimators in \eqref{hypergrad_first_orde}: 
    \begin{align*}
        \EE_{|k} \left[ \| \tilde \nabla_x G_{\eta,0}^{k,y,i} - \tilde \nabla_x G_{\eta,0}^{k, z,i} \|^2 \right] \le \mathcal{O}(\eta^2n^3 + n \|\bar{y}_k - \bar{z}_k \|^2),
    \end{align*}
    which implies that 
    \begin{align*}
        \texttt{Var}_{|k} (\tilde{\nabla}\mathcal{L}_{\eta,\mu}^k) &\le \frac{1}{s_k} \cdot \mathcal{O}(n + \lambda^2 \eta^2 n^3 + \lambda^2 n \|\bar{y}_k - \bar{z}_k\|^2).
    \end{align*}
\end{lemma}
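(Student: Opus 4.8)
The plan rests on the observation that, for each index $i$, the two estimators $\tilde \nabla_x G_{\eta,0}^{k,y,i}$ and $\tilde \nabla_x G_{\eta,0}^{k,z,i}$ in Algorithm~\ref{alg_ZBSA2} are built from the \emph{same} Gaussian direction $u:=u^{(2)}_{k,i}$ and the \emph{same} sample $\zeta:=\zeta_{k,i}$, differing only in the $y$-argument ($\bar{y}_k$ versus $\bar{z}_k$). Setting $\phi(w):=G(x_k+\eta u,w,\zeta)-G(x_k,w,\zeta)$, I would first record that
\[
\tilde \nabla_x G_{\eta,0}^{k,y,i} - \tilde \nabla_x G_{\eta,0}^{k,z,i} = \frac{\phi(\bar{y}_k)-\phi(\bar{z}_k)}{\eta}\,u ,
\]
so that the $\mathcal{O}(n)$-variance ``direction noise'' $\langle \nabla_x G,u\rangle u-\nabla_x G$ that dominates a single zeroth-order gradient estimate cancels between the two terms, leaving only the finite difference of a directional derivative along the segment $[\bar{z}_k,\bar{y}_k]$. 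The common $\zeta$ matters too: $\|\nabla_x G(x_k,\bar{y}_k,\zeta)-\nabla_x G(x_k,\bar{z}_k,\zeta)\|\le L_{1,G}\|\bar{y}_k-\bar{z}_k\|$ pathwise by Assumption~\ref{stochastic_assumption}, so no $\zeta$-variance survives either.

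Next I would control $\phi(\bar{y}_k)-\phi(\bar{z}_k)$ by the fundamental theorem of calculus in the $x$-direction,
\[
\phi(\bar{y}_k)-\phi(\bar{z}_k)=\eta\int_0^1\big\langle \nabla_x G(x_k+s\eta u,\bar{y}_k,\zeta)-\nabla_x G(x_k+s\eta u,\bar{z}_k,\zeta),\,u\big\rangle\,ds ,
\]
and split it as $\eta\langle D,u\rangle+R$ with $D:=\nabla_x G(x_k,\bar{y}_k,\zeta)-\nabla_x G(x_k,\bar{z}_k,\zeta)$, $\|D\|\le L_{1,G}\|\bar{y}_k-\bar{z}_k\|$. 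Writing the gradient increments inside the integral as integrals of $\nabla^2_{xx}G$ and using the Lipschitz-Hessian bound $\|\nabla^2_{xx}G(\cdot,\bar{y}_k,\zeta)-\nabla^2_{xx}G(\cdot,\bar{z}_k,\zeta)\|\le L_{2,G}\|\bar{y}_k-\bar{z}_k\|$ gives $|R|\le \tfrac{1}{2}L_{2,G}\eta^2\|\bar{y}_k-\bar{z}_k\|\,\|u\|^2$, so
\[
\tilde \nabla_x G_{\eta,0}^{k,y,i} - \tilde \nabla_x G_{\eta,0}^{k,z,i} = \langle D,u\rangle\,u + \frac{R}{\eta}\,u .
\]
Taking conditional second moments over $(u,\zeta)$, the Gaussian identity $\EE[\langle D,u\rangle^2\|u\|^2]=(n+2)\|D\|^2$ and $\EE\|u\|^6=\mathcal{O}(n^3)$ yield $\EE_{|k}\big\|\tilde \nabla_x G_{\eta,0}^{k,y,i}-\tilde \nabla_x G_{\eta,0}^{k,z,i}\big\|^2\le 2(n+2)L_{1,G}^2\|\bar{y}_k-\bar{z}_k\|^2+\tfrac{1}{2} L_{2,G}^2\eta^2\|\bar{y}_k-\bar{z}_k\|^2\,\EE\|u\|^6=\mathcal{O}\big(n\|\bar{y}_k-\bar{z}_k\|^2+\eta^2 n^3\big)$, where in the last step boundedness of $\|\bar{y}_k-\bar{z}_k\|$ (which follows from strong convexity of the lower level and boundedness of the inner iterates; cf.\ Lemma~\ref{lemma:yz:diffs}) absorbs $\eta^2 n^3\|\bar{y}_k-\bar{z}_k\|^2$ into $\eta^2 n^3$. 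This is the first inequality.

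For the second inequality I would write $\tilde \nabla \mathcal{L}_{\eta,\mu}^k=\tfrac{1}{s_k}\sum_{i=1}^{s_k}W_i$ with $W_i:=\tilde \nabla_x F_{\eta,0}^{k,z,i}+\lambda\big(\tilde \nabla_x G_{\eta,0}^{k,z,i}-\tilde \nabla_x G_{\eta,0}^{k,y,i}\big)$ conditionally i.i.d.\ given the history, so $\texttt{Var}_{|k}(\tilde \nabla \mathcal{L}_{\eta,\mu}^k)=\tfrac{1}{s_k}\texttt{Var}_{|k}(W_1)\le\tfrac{1}{s_k}\EE_{|k}\|W_1\|^2\le\tfrac{2}{s_k}\big(\EE_{|k}\|\tilde \nabla_x F_{\eta,0}^{k,z,1}\|^2+\lambda^2\EE_{|k}\|\tilde \nabla_x G_{\eta,0}^{k,z,1}-\tilde \nabla_x G_{\eta,0}^{k,y,1}\|^2\big)$. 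The first term is $\mathcal{O}(n+\eta^2 n^3)$ by the single-sample ($N=1$, $\mu\to 0$) specialization of Proposition~\ref{propNestApprox_stch}(a), together with the Lipschitz bound on $\|\nabla_x f\|$ and the variance bound on $\nabla_x F$ from Assumption~\ref{stochastic_assumption}; the second term is $\lambda^2\mathcal{O}(n\|\bar{y}_k-\bar{z}_k\|^2+\eta^2 n^3)$ by the first part. Adding and using $\lambda\gtrsim 1$ to absorb the bare $\eta^2 n^3$ gives $\texttt{Var}_{|k}(\tilde \nabla \mathcal{L}_{\eta,\mu}^k)\le\tfrac{1}{s_k}\mathcal{O}(n+\lambda^2\eta^2 n^3+\lambda^2 n\|\bar{y}_k-\bar{z}_k\|^2)$.

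The hard part will be the first inequality, and specifically keeping the factor multiplying $\|\bar{y}_k-\bar{z}_k\|^2$ linear in $n$: the crude estimate $|\phi(\bar{y}_k)-\phi(\bar{z}_k)|\le L_{1,G}\eta\|u\|\,\|\bar{y}_k-\bar{z}_k\|$ only gives $L_{1,G}^2\|\bar{y}_k-\bar{z}_k\|^2\,\EE\|u\|^4=\mathcal{O}(n^2\|\bar{y}_k-\bar{z}_k\|^2)$, which is lossy. Avoiding this requires isolating the deterministic directional-derivative part $\langle D,u\rangle u$ — whose $\|u\|^2$-weighted second moment is only $(n+2)\|D\|^2$ — and showing via the Lipschitz-Hessian assumption that the leftover term is genuinely $\mathcal{O}(\eta)$-small; the rest is bookkeeping on top of the zeroth-order moment bounds already in Proposition~\ref{propNestApprox_stch}.
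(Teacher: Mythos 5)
Your proposal is correct in substance and shares the paper's key idea: because $\tilde\nabla_x G_{\eta,0}^{k,y,i}$ and $\tilde\nabla_x G_{\eta,0}^{k,z,i}$ are built from the \emph{same} direction $u^{(2)}_{k,i}$ and the \emph{same} sample $\zeta_{k,i}$, the dominant zeroth-order noise cancels and the coefficient of $\|\bar y_k-\bar z_k\|^2$ is only linear in $n$. You also isolate the same leading term $\langle D,u\rangle u$ with $D=\nabla_x G(x_k,\bar y_k,\zeta)-\nabla_x G(x_k,\bar z_k,\zeta)$; your exact identity $\EE[\langle D,u\rangle^2\|u\|^2]=(n+2)\|D\|^2$ is a cleaner route than the paper's Cauchy--Schwarz plus fourth-moment computation, which yields $\sqrt{3}(n+4)L_{1,G}^2\|\bar y_k-\bar z_k\|^2$. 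The second part (i.i.d.\ averaging over $i$, splitting off the $F$-term, absorbing the bare $\eta^2n^3$ using $\lambda\gtrsim 1$) matches the paper's argument.

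The one place where you genuinely diverge is the treatment of the discretization remainder, and this is where your version has a small gap. The paper adds and subtracts the first-order Taylor terms at $\bar y_k$ and at $\bar z_k$ \emph{separately}, so each remainder is bounded pathwise by $\tfrac{\eta}{2}L_{1,G}\|u\|^2$ and contributes $\mathcal{O}(\eta^2 L_{1,G}^2 n^3)$ with no factor of $\|\bar y_k-\bar z_k\|$; only gradient Lipschitzness of $G$ is used. You instead apply the fundamental theorem of calculus to the single finite difference $\phi(\bar y_k)-\phi(\bar z_k)$ and bound the remainder via Lipschitzness of $\nabla^2_{xx}G$ in $y$, obtaining $|R|\le\tfrac{1}{2}L_{2,G}\eta^2\|\bar y_k-\bar z_k\|\|u\|^2$ and hence a remainder contribution of $\mathcal{O}(\eta^2 n^3\|\bar y_k-\bar z_k\|^2)$. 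This is a legitimate (and in some regimes sharper) bound, but to land on the stated form $\mathcal{O}(\eta^2 n^3+n\|\bar y_k-\bar z_k\|^2)$ you must either assume $\|\bar y_k-\bar z_k\|=\mathcal{O}(1)$ or use $\eta^2 n^2=\mathcal{O}(1)$ to fold the term into $n\|\bar y_k-\bar z_k\|^2$. Your justification for boundedness cites Lemma~\ref{lemma:yz:diffs}, which controls $y^*(x)-z^*(x)$, not the stochastic iterates $\bar y_k-\bar z_k$; the iterates' difference is exactly the quantity the subsequent inner-loop lemmas are designed to control, so invoking its boundedness here is circular as written. Either state your bound as $\mathcal{O}\bigl(\eta^2 n^3\|\bar y_k-\bar z_k\|^2+n\|\bar y_k-\bar z_k\|^2\bigr)$ and note it suffices downstream under the theorem's choice of $\eta$, or switch to the paper's two-sided Taylor expansion, which avoids the issue and also avoids needing the Lipschitz-Hessian constant $L_{2,G}$ for this lemma.
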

\begin{proof}
For simplicity, we omit the subscript $k$ and superscript $i$ here whenever the context is clear. Note that for arbitrary vectors $\|u+v+w\|^2\leq 3\left(\|u\|^2 + \|v\|^2 + \|w\|^2 \right)$, and therefore,
\begin{align}\notag 
     \EE  \| \tilde \nabla_x G_{\eta,0}^y - \tilde \nabla_x G_{\eta,0}^{z} \|^2 \leq & \frac{3}{\eta^2}\EE\left\|   \left( G(x + \eta u, \bar y, \zeta) \!-\! G(x, \bar y,\zeta) \!-\! \eta \vdot{\nabla_x G(x, \bar y, \zeta)}{u}  \right) u \right \|^2  \\ \notag & \!+ \frac{3}{\eta^2}\EE\left\|   \left( G(x + \eta u, \bar z, \zeta) \!-\! G(x, \bar z,\zeta) \!-\! \eta \vdot{\nabla_x G(x, \bar z, \zeta)}{u}  \right) u \right \|^2  \\ ~&\! + 3 \EE \left\| \left\langle \nabla_x G(x, \bar y, \zeta) - \nabla_x G(x, \bar z, \zeta), u\right\rangle u\right\|^2.  \label{3:terms}
\end{align}
We bound each term on the right side of \eqref{3:terms}. By the smoothness of $G$ we have 
\begin{align*}
    \left| G(x + \eta u, \bar y, \zeta) - G(x, \bar y,\zeta) - \eta \left\langle \nabla_x G(x, \bar y, \zeta), u \right\rangle  \right| \leq \frac{\eta^2 L_{1,G} }{2}\|u\|^2, 
\end{align*}
which implies that
\begin{align}\notag 
    \frac{3}{\eta^2} & \EE\left\|   \left( G(x + \eta u, \bar y, \zeta) - G(x, \bar y,\zeta) - \eta \vdot{\nabla_x G(x, \bar y, \zeta)}{u}  \right) u \right \|^2  \\
    &\leq \frac{3\eta^2 L_{1,G}^2 }{4}\EE\|u\|^6  \leq \mathcal{O} ( \eta^2 L_{1,G}^2 n^3 ). \label{ub:2term}
\end{align}
Clearly a similar upper bound applies to the second term on the right side of \eqref{3:terms}. To bound the third term, notice that 
\begin{align}\notag 
    &\EE \left\| \left\langle \nabla_x G(x, \bar y, \zeta) - \nabla_x G(x, \bar z, \zeta), u\right\rangle u\right\|^2 \\
    &= \EE\left[ \left| \left\langle \nabla_x G(x, \bar y, \zeta) - \nabla_x G(x, \bar z, \zeta), u\right\rangle \right|^2 \|u\|^2 \right]\\ 
    \notag &\leq \left( \EE\left[ \left| \left\langle \nabla_x G(x, \bar y, \zeta) - \nabla_x G(x, \bar z, \zeta), u\right\rangle \right|^4  \right] \right)^{\frac{1}{2}} \left(\EE \|u\|^4  \right)^{\frac{1}{2}}
    \\
    &\leq (n+4) \left( \EE\left[ \left| \left\langle \nabla_x G(x, \bar y, \zeta) - \nabla_x G(x, \bar z, \zeta), u\right\rangle \right|^4  \right] \right)^{\frac{1}{2}},\label{3rd:term}
\end{align}
where we used the Cauchy–Schwarz inequality. To simplify the notation we use $\delta_\zeta = \nabla_x G(x, \bar y, \zeta) - \nabla_x G(x, \bar z, \zeta)$. For symmetric matrices $A$ and $B$ (see \S 8.2.4 of \cite{petersen2008matrix}):
\begin{equation*}
\EE u^\top A u u^\top B u = 2\tr(AB) + \tr(A)\tr(B).
\end{equation*}
Since $\delta_\zeta$ and $u$ are independent, we have
\begin{align*}
    \EE\left[ (\delta_\zeta^\top u)^4 \right] &= \EE\left[ u^\top\delta_\zeta \delta_\zeta^\top u u^\top\delta_\zeta \delta_\zeta^\top u \right] = 3\EE_\zeta  \left\| \delta_\zeta\right\|^4 \leq 3 L_{1,G}^4 \|\bar y-\bar z\|^4,
\end{align*}
which combined with \eqref{3rd:term} gives 
\begin{equation}
     \EE \left\| \left\langle \nabla_x G(x, \bar y, \zeta) - \nabla_x G(x, \bar z, \zeta), u\right\rangle u\right\|^2 \leq \sqrt{3}(n+4) L_{1,G}^2 \|\bar y-\bar z\|^2. \label{ub:3rdterm}
\end{equation}
Using \eqref{ub:3rdterm} and  \eqref{ub:2term} in \eqref{3:terms} gives 
\begin{equation}
     \EE  \| \tilde \nabla_x G_{\eta,0}^y - \tilde \nabla_x G_{\eta,0}^{z} \|^2  \leq \mathcal{O} \left( \eta^2 L_{1,G}^2 n^3 + n L_{1,G}^2 \|\bar{y} - \bar{z}\|^2 \right).
\end{equation}

To see the second inequality, we know that
\begin{align*}
    \texttt{Var}(\tilde{\nabla}\mathcal{L}_{\eta,\mu}^k) &\le \texttt{Var}(\tilde{\nabla}_x F_{\eta,0}^{k,z}) + \lambda^2 \texttt{Var}(\tilde{\nabla}_x G_{\eta,0}^{k,z} - \tilde{\nabla}_x G_{\eta,0}^{k,y}) \\
    &\le \frac{1}{s_k} \texttt{Var} (\tilde{\nabla}_x F_\eta^{k,z,i}) + \frac{\lambda^2}{s_k} \texttt{Var} (\tilde{\nabla}_x G_{\eta,0}^{k,z,i} - \tilde{\nabla}_x G_{\eta,0}^{k,y,i}) \\
    &\le \frac{1}{s_k} \left(n\sigma_{1,F}^2 + L_{1,F}^2 \eta^2 n^3 + \lambda^2 L_{1,G}^2 \eta^2 n^3 + \lambda^2 L_{1,G}^2 n  \|\bar{y}-\bar{z}\|^2 \right).
\end{align*}
\end{proof}

Given Lemma \ref{lemma:G_diff_var}, we can bound the overall estimation bias in the outer loop by $\mathcal{O}(\epsilon)$. Simultaneously, we aim to bound the number of outer iterations less than $\mathcal{O}(\epsilon^{-1})$ given the batch size $s_k = \mathcal{O}(n/\epsilon)$. Hence, to achieve the optimal dependency on $\epsilon$ in the overall complexity, we must keep the inner loop iterations less than $\mathcal{O}(\epsilon^{-1})$, while  maintaining the outer estimation-bias less than $\mathcal{O}(\epsilon)$. To do so, we must have tight control over the quantity $\|\bar{y}_k - \bar{z}_k\|^2 = \|q_k\|^2$, which we handle in the inner-loop analysis.

\subsection{Inner Loop Analysis}
We start with upper bounds on the inner loops for $t$-steps:
\begin{lemma}  
\label{lemma:control_y_z}
    Let $\beta \le c / L_{1,g}$ and $\lambda \ge L_{1,f} / (c \lambda_g)$ for some sufficiently small $c > 0$. For every $k^{th}$ step of the outer iteration, the MSE bound at the $t^{th}$ step of the inner loop with $\beta_t = \beta$, is given by
    \begin{align*}
        \EE_{|t} \|y_{t+1} - y^*_{\mu}(x_k)\|^2 &\le (1-\beta \lambda_g) \|y_t - y^*_{\mu}(x_k) \|^2 + \beta^2\cdot {\cal O}(\mu^2 m^3 + m), \\
        \EE_{|t} \|z_{t+1} - z^*_{\mu}(x_k)\|^2 &\le (1-\beta \lambda_g) \|z_t - z^*_{\mu}(x_k) \|^2 + \beta^2\cdot {\cal O}(\mu^2 m^3 + m).
    \end{align*} 
\end{lemma}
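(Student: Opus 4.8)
The plan is to establish both inequalities by the standard one-step contraction argument for projected/unconstrained SGD applied to a strongly convex objective, using that both updating sequences are ordinary SGD steps on functions that are strongly convex in the relevant variable. For the $z$-sequence, the update $z_{t+1}=z_t-\beta_t\tilde\nabla_y G_{0,\mu}^t$ is a single-sample zeroth-order SGD step for minimizing $g_{0,\mu}(x_k,\cdot)$, which is $\lambda_g$-strongly convex (Assumption~\ref{fg_assumption}(b), inherited by the smoothed function) and has $L_{1,g}$-Lipschitz gradient. For the $y$-sequence, the update in \eqref{eq:OptJointUpdate} is a single-sample zeroth-order SGD step for minimizing $\lambda^{-1}f_{0,\mu}(x_k,\cdot)+g_{0,\mu}(x_k,\cdot)$, which is $(3\lambda_g/4)$-strongly convex (as noted after \eqref{z_first}, since $\lambda\ge 4L_{1,f}/\lambda_g$) with smoothness constant of order $L_{1,g}$; I would absorb the $3/4$ factor into the constant so the stated rate $(1-\beta\lambda_g)$ holds after possibly shrinking the small constant $c$.

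The first step is to write, for the $z$-update (the $y$-update being identical modulo the modulus $3\lambda_g/4$ and an extra $\lambda^{-1}$-weighted term),
\begin{align*}
\|z_{t+1}-z^*_\mu(x_k)\|^2 &= \|z_t-z^*_\mu(x_k)\|^2 - 2\beta_t\langle \tilde\nabla_y G_{0,\mu}^t, z_t-z^*_\mu(x_k)\rangle + \beta_t^2\|\tilde\nabla_y G_{0,\mu}^t\|^2.
\end{align*}
Taking $\EE_{|t}$, I use that $\EE_{|t}[\tilde\nabla_y G_{0,\mu}^t]=\nabla_y g_{0,\mu}(x_k,z_t)$ (Stein's identity, Theorem~\ref{thStein}, since the $\zeta$-average then the $v$-average gives the smoothed gradient exactly — note there is no smoothing bias here because the estimator targets $g_{0,\mu}$ itself, not $g$), combined with strong convexity $\langle \nabla_y g_{0,\mu}(x_k,z_t), z_t-z^*_\mu(x_k)\rangle \ge \lambda_g\|z_t-z^*_\mu(x_k)\|^2$ (using $\nabla_y g_{0,\mu}(x_k,z^*_\mu(x_k))=0$). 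For the last term I invoke Proposition~\ref{propNestApprox_stch}(a) (with $N=1$, $\eta=0$, $q=g$) to bound $\EE_{|t}\|\tilde\nabla_y G_{0,\mu}^t\|^2 = \mathcal{O}(\mu^2 m^3 + m(\sigma_{1,G}^2+\|\nabla_y g(x_k,z_t)\|^2))$; I then split $\|\nabla_y g(x_k,z_t)\|^2 \le 2\|\nabla_y g_{0,\mu}(x_k,z_t)\|^2 + \mathcal{O}(\mu^4 m^2 L_{1,g}^2)$ and use Lipschitz continuity $\|\nabla_y g_{0,\mu}(x_k,z_t)\|\le L_{1,g}\|z_t-z^*_\mu(x_k)\|$ to pull out a term proportional to $\|z_t-z^*_\mu(x_k)\|^2$. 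Collecting terms gives
\begin{align*}
\EE_{|t}\|z_{t+1}-z^*_\mu(x_k)\|^2 &\le \big(1 - 2\beta_t\lambda_g + \mathcal{O}(\beta_t^2 m L_{1,g}^2)\big)\|z_t-z^*_\mu(x_k)\|^2 + \beta_t^2\cdot\mathcal{O}(\mu^2 m^3 + m),
\end{align*}
and the condition $\beta=\beta_t\le c/L_{1,g}$ with $c$ small makes $\mathcal{O}(\beta_t^2 m L_{1,g}^2)\le \beta_t\lambda_g$ — wait, that needs $\beta_t \lesssim \lambda_g/(m L_{1,g}^2)$; I would therefore either state the stepsize condition with the $m$-dependence (more honest) or note that the constant $c$ is allowed to depend on the condition number, so that $1-2\beta_t\lambda_g+\mathcal{O}(\beta_t^2 m L_{1,g}^2)\le 1-\beta_t\lambda_g$. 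The $y$-update is handled identically: its stochastic gradient $\lambda^{-1}\tilde\nabla_y F_{0,\mu}^t + \tilde\nabla_y G_{0,\mu}^t$ is unbiased for $\nabla_y(\lambda^{-1}f_{0,\mu}+g_{0,\mu})(x_k,y_t)$, the objective is $(3\lambda_g/4)$-strongly convex, and the second-moment bound picks up the extra $\lambda^{-2}\,\mathcal{O}(m(\sigma_{1,F}^2+\ldots))$ which is $\mathcal{O}(m)$ and absorbed into the $\mathcal{O}(\mu^2 m^3 + m)$ term.

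The main obstacle is bookkeeping rather than conceptual: carefully verifying that the single-sample zeroth-order gradient estimators are \emph{unbiased} for the gradients of the \emph{smoothed} lower-level objectives (so that no $\mu$-dependent bias term enters the inner product, only the variance), and then controlling the second moment tightly enough — in particular tracking the $m$ versus $\mu^2 m^3$ split and ensuring the $\|z_t - z^*_\mu\|^2$-proportional part of the variance is genuinely dominated by the strong-convexity gain under the stepsize restriction. A secondary subtlety is the $3/4$ factor in the strong-convexity modulus of the $y$-objective, which I would reconcile with the clean $(1-\beta\lambda_g)$ rate by redefining the admissible constant $c$ (equivalently, the inequality holds with $(1-\tfrac{3}{4}\beta\lambda_g)$, and one writes $\lambda_g$ for a possibly rescaled modulus).
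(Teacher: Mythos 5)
Your proposal is correct and follows essentially the same route as the paper's proof: a one-step SGD contraction using unbiasedness of the zeroth-order estimators for the gradients of the \emph{smoothed} lower-level objectives, strong convexity (modulus $3\lambda_g/4$ for the $y$-objective, absorbed into the constant), and a second-moment bound of order $\mu^2 m^3 + m$. Your additional observation that absorbing the $\mathcal{O}(\beta^2 m L_{1,g}^2)\|z_t-z^*_\mu(x_k)\|^2$ portion of the second moment into the contraction really needs $\beta \lesssim \lambda_g/(m L_{1,g}^2)$ rather than merely $\beta \le c/L_{1,g}$ is a fair point that the paper's own proof glosses over, but it is harmless since that stronger stepsize restriction is imposed anyway in the surrounding analysis (e.g.\ Lemma~\ref{lemma:yz_diff_descent}).
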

\begin{proof}
    See Section \ref{sec:3proof4} of the Supplement. 
\end{proof}

The crucial step in our proof is to show that $q_k - q_k^*$ can be much more tightly controlled rather than controlling $y_k - y^*_k$ and $z_k - z_k^*$ separately as in Lemma \ref{lemma:control_y_z}. We can show that $q_t$ converges faster due to smaller variances by canceling noise effects in $y_t$, $z_t$ when using the common noise variables. To see this, we first show a tighter control on the difference sequences of lower-level variables:
\begin{lemma}
    \label{lemma:yz_diff_descent}
    Let $\beta \le \frac{c \lambda_g}{L_{1,g}^2 m}$ for some sufficiently small constant $c > 0$. At every iteration $k$, for any $t\ge0$ with $\beta_t = \beta$, we have
    \begin{align*}
        \EE_{|t} \|y_{t+1}-z_{t+1}\|^2 &\le (1-\lambda_g \beta) \|y_t - z_t\|^2 + \mathcal{O} \left(\frac{\beta}{\lambda^2} + \frac{\beta^2 m}{\lambda^2} + \beta^2 \mu^2 m^3\right).
    \end{align*}
\end{lemma}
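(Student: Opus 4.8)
The plan is to track the difference sequence $q_t := y_t - z_t$ directly, exploiting that the two $G$-estimators entering the $y$- and $z$-updates in \eqref{eq:OptJointUpdate} are built from the \emph{same} samples $v_t^{(1)}, \zeta_t^{(1)}$ (and that the $F$-estimator is evaluated at $z_t$), so the leading noise cancels in the difference. Subtracting the $z$-update from the $y$-update gives
\begin{align*}
q_{t+1} = q_t - \beta\lambda^{-1}\tilde\nabla_y F_{0,\mu}^t - \beta\,\Delta G_t, \qquad \Delta G_t := \tfrac{1}{\mu}\big(h_\zeta(y_t) - h_\zeta(z_t)\big)\,v, \quad v := v_t^{(1)},
\end{align*}
where $h_\zeta(w) := G(x_k, w + \mu v, \zeta_t^{(1)}) - G(x_k, w, \zeta_t^{(1)})$. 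Expanding $\|q_{t+1}\|^2$ and taking the conditional expectation $\EE_{|t}$, I would use $\EE_{|t}[\Delta G_t] = \nabla_y g_{0,\mu}(x_k,y_t) - \nabla_y g_{0,\mu}(x_k,z_t)$ together with the fact that Gaussian smoothing preserves the $\lambda_g$-strong convexity of $g(x_k,\cdot)$, so $\langle q_t, \EE_{|t}[\Delta G_t]\rangle \ge \lambda_g\|q_t\|^2$; this produces the contraction factor $1 - 2\lambda_g\beta$. The $F$-cross term $-2\beta\lambda^{-1}\langle q_t, \nabla_y f_{0,\mu}(x_k,z_t)\rangle$ is handled by Cauchy--Schwarz (using $\|\nabla_y f_{0,\mu}\| \le L_{0,f}$) and Young's inequality, contributing at most $\tfrac{\lambda_g\beta}{2}\|q_t\|^2 + \mathcal{O}(\beta/\lambda^2)$.

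The heart of the proof is the second-moment estimate on $\Delta G_t$, and specifically obtaining the \emph{linear} (not quadratic) dependence on $m$ that the stepsize restriction $\beta \le c\lambda_g/(L_{1,g}^2 m)$ encodes. A naive bound via the mean value theorem gives only $\|\Delta G_t\| \le L_{1,g}\|v\|^2\|q_t\|$, hence $\EE_{|t}\|\Delta G_t\|^2 = \mathcal{O}(m^2\|q_t\|^2)$, a factor of $m$ too large. Instead I would expand $\nabla_w h_\zeta(w) = \nabla_y G(x_k, w+\mu v, \zeta_t^{(1)}) - \nabla_y G(x_k, w, \zeta_t^{(1)}) = \mu\,\nabla_{yy}^2 G(x_k, w, \zeta_t^{(1)})\,v + r(w)$ with $\|r(w)\| \le \tfrac{\mu^2 L_{2,g}}{2}\|v\|^2$ (Lipschitz Hessian of $g$), integrate along the segment from $z_t$ to $y_t$, and write
\begin{align*}
\Delta G_t = \big(v^\top \bar{H}_t q_t\big)\,v + E_t, \qquad \bar{H}_t := \int_0^1 \nabla_{yy}^2 G(x_k, z_t + s q_t, \zeta_t^{(1)})\,ds, \qquad \|E_t\| \le \tfrac{\mu L_{2,g}}{2}\|v\|^3\|q_t\|,
\end{align*}
with $\|\bar{H}_t\|_{\mathrm{op}} \le L_{1,g}$. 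The leading term has the form $(v^\top w)v$ with $\|w\| \le L_{1,g}\|q_t\|$, and the Gaussian moment identity $\EE[(v^\top w)^2\|v\|^2] = (m+2)\|w\|^2$ then yields $\EE_{|t}\|\Delta G_t\|^2 \le \mathcal{O}(m\|q_t\|^2 + \mu^2 m^3\|q_t\|^2)$, the remainder $E_t$ contributing $\mathcal{O}(\mu^2 m^3\|q_t\|^2)$ through $\EE\|v\|^6 = \mathcal{O}(m^3)$. Together with the standard bound $\EE_{|t}\|\tilde\nabla_y F_{0,\mu}^t\|^2 = \mathcal{O}(m + \mu^2 m^3)$ for the smoothed stochastic gradient, the quadratic term $\beta^2\EE_{|t}\|\lambda^{-1}\tilde\nabla_y F_{0,\mu}^t + \Delta G_t\|^2$ contributes $\mathcal{O}(\beta^2 m + \beta^2\mu^2 m^3)\|q_t\|^2 + \mathcal{O}(\beta^2 m/\lambda^2 + \beta^2\mu^2 m^3)$.

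Collecting the three contributions, I get $\EE_{|t}\|q_{t+1}\|^2 \le \big(1 - \tfrac32\lambda_g\beta + \mathcal{O}(\beta^2 m + \beta^2\mu^2 m^3)\big)\|q_t\|^2 + \mathcal{O}(\beta/\lambda^2 + \beta^2 m/\lambda^2 + \beta^2\mu^2 m^3)$. Taking $c$ small in $\beta \le c\lambda_g/(L_{1,g}^2 m)$, and using the smallness of $\mu$ already imposed on the algorithm (so that $\mu^2 m^2 L_{2,g}^2 \lesssim L_{1,g}^2$), the $\mathcal{O}(\beta^2 m + \beta^2\mu^2 m^3)\|q_t\|^2$ term is absorbed into $\tfrac{\lambda_g\beta}{2}\|q_t\|^2$, leaving exactly the contraction coefficient $1 - \lambda_g\beta$ and error $\mathcal{O}(\beta/\lambda^2 + \beta^2 m/\lambda^2 + \beta^2\mu^2 m^3)$, as claimed. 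I expect the only genuine obstacle to be the sharp bound on $\EE_{|t}\|\Delta G_t\|^2$: it relies essentially on the coupled noise across the $y$- and $z$-updates and on replacing the crude Lipschitz-gradient estimate by a second-order Taylor expansion combined with the exact fourth-moment Gaussian identity; the rest is routine expansion, strong convexity, and Young's inequality.
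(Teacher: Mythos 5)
Your proposal is correct and follows the same overall architecture as the paper's proof: subtract the coupled updates so that the common noise $v_t^{(1)},\zeta_t^{(1)}$ cancels at leading order, use strong monotonicity of $\nabla_y g_{0,\mu}$ (inherited from $g$) to extract the contraction, handle the $\lambda^{-1}\nabla_y f_{0,\mu}$ cross term by Cauchy--Schwarz and Young to get $\tfrac{\lambda_g\beta}{2}\|q_t\|^2+\mathcal{O}(\beta/\lambda^2)$, and then absorb the $\mathcal{O}(\beta^2 m)\|q_t\|^2$ variance contribution using $\beta\le c\lambda_g/(L_{1,g}^2 m)$. The one step you do differently is the second-moment bound on the difference of the two $G$-estimators, which you correctly identify as the crux. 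The paper (via the argument of Lemma~\ref{lemma:G_diff_var}) splits off two first-order Taylor remainders, each contributing an \emph{additive} $\mathcal{O}(\mu^2 L_{1,G}^2 m^3)$ using only Lipschitz continuity of $\nabla_y G$, and bounds the cross term $\langle \nabla_y G(x,y_t,\zeta)-\nabla_y G(x,z_t,\zeta),v\rangle v$ by Cauchy--Schwarz plus the Gaussian fourth-moment identity to get the dimension-linear $\mathcal{O}(m)\|q_t\|^2$. You instead use a second-order Taylor expansion with an averaged Hessian $\bar H_t$ and the identity $\EE[(v^\top w)^2\|v\|^2]=(m+2)\|w\|^2$ (legitimate since $\bar H_t q_t$ is independent of $v$), which gives the same $\mathcal{O}(m)\|q_t\|^2$ leading term but a \emph{multiplicative} remainder $\mathcal{O}(\mu^2 L_{2,G}^2 m^3)\|q_t\|^2$. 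This is equally valid: it requires the Lipschitz-Hessian assumption on $G$ (available from Assumption~\ref{stochastic_assumption}) where the paper's version needs only Lipschitz gradients for this step, and it forces you to invoke an additional smallness condition $\mu^2 m^2 L_{2,G}^2\lesssim L_{1,g}^2$ to absorb that remainder into the contraction --- a condition the paper's additive bound avoids, though it is comfortably satisfied by the choice $\mu=\mathcal{O}(\sqrt{\epsilon/m^3})$ used downstream. Your bookkeeping of the contraction constant is in fact slightly tighter than the paper's (whose displayed proof ends with $1-\lambda_g\beta/2$ rather than the stated $1-\lambda_g\beta$); either constant suffices for the way the lemma is used.
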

\begin{proof}
    See Section \ref{sec:3proof5} of the Supplement. 
\end{proof}

Below, we show the convergence of main quantity $q_t = y_t - z_t$ toward $q_\mu^*(x_k) = y_\mu^*(x_k) - z_\mu^*(x_k)$ due to the noise canceling effect:
\begin{lemma}
    Fix $k$ and $x_k$, and for all $t \ge 0$, consider the potential function 
    \begin{align}
        \mathbb{V}^L_{t} &:= \|q_{t} - q_\mu^*(x_k) \|^2 + \frac{C_y}{\lambda^2} \|y_{t} - y_\mu^*(x_k) \|^2 \nonumber \\
        &\quad + \frac{C_z}{\lambda^2} \|z_{t} - z_\mu^* (x_k)\|^2 + \beta C_q \|q_{t}\|^2, \label{eq:lower_potential}    
    \end{align} 
    where 
    \begin{align}
        C_y &:= \mathcal{O}\left( \frac{L_{1,f}^2}{\lambda_g^2} \right), \ C_z:= \mathcal{O}\left( \frac{L_{2,g}^2 L_{f,0}^2}{\lambda_g^4} \right), \ C_q:= \mathcal{O}\left( \frac{L_{1,g}^2}{\lambda_g} \right). \label{eq:C_yzq_order}
    \end{align}
    Let $c>0$ be a sufficiently small constant such that $\beta \le \frac{c \lambda_g}{L_{1,g}^2 m}$, $L_{1,g}^2 \mu^2 m^3 \le c$, and $\beta_t = \beta$ for all $t \ge 0$. Furthermore, assume $\lambda \ge \frac{1}{c} \cdot \max\left( \frac{L_{1,f}}{\lambda_g}, \sqrt{\frac{L_{1,f}}{L_{1,g}}}, \frac{\sigma_{1,F}}{\sigma_{1,G}} \right)$. Then,
    \begin{align*}
        \mathbb{V}_{t}^L = (1-\lambda_g \beta/2)^t \mathbb{V}_0^L + \mathcal{O}(\beta/\lambda^2) + \mathcal{O}(1/\lambda^4).
    \end{align*}
\end{lemma}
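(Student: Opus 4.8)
The plan is to derive a one-step contraction inequality for the Lyapunov function $\mathbb{V}^L_t$ in \eqref{eq:lower_potential} and then unroll it. Three of its four components are already under control: Lemma~\ref{lemma:control_y_z} gives $\EE_{|t}\|y_{t+1}-y_\mu^*(x_k)\|^2 \le (1-\lambda_g\beta)\|y_t-y_\mu^*(x_k)\|^2 + \beta^2\cdot\mathcal{O}(\mu^2 m^3 + m)$ and the analogue for $z$, while Lemma~\ref{lemma:yz_diff_descent} gives $\EE_{|t}\|q_{t+1}\|^2 \le (1-\lambda_g\beta)\|q_t\|^2 + \mathcal{O}(\beta/\lambda^2 + \beta^2 m/\lambda^2 + \beta^2\mu^2 m^3)$. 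The decisive missing ingredient is a one-step recursion for the main quantity $\|q_{t+1}-q_\mu^*(x_k)\|^2$; once that is in hand, the claim follows by summing the four recursions with the weights $1,\ \tfrac{C_y}{\lambda^2},\ \tfrac{C_z}{\lambda^2},\ \beta C_q$ and iterating.

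To get the $q$-recursion, write $q_{t+1}-q_\mu^* = (q_t-q_\mu^*) - \beta h_t + \beta\xi_t$, where $h_t$ is the deterministic part of the difference of the two inner updates (the $\nabla_y g_{0,\mu}(x_k,\cdot)$ terms evaluated at $y_t$ and $z_t$, together with the $\lambda^{-1}\nabla_y f_{0,\mu}(x_k,\cdot)$ term) and $\xi_t$ collects the conditionally mean-zero zeroth-order estimation error. Expanding $\nabla_y g_{0,\mu}(x_k,\cdot)$ about $z_\mu^*(x_k)$ and the combined gradient about $y_\mu^*(x_k)$ --- both stationary points by definition --- and using $\lambda_g$-strong convexity together with gradient- and Hessian-Lipschitzness of the smoothed functions (inherited from Assumption~\ref{fg_assumption}) and $\|q_\mu^*(x_k)\| = \mathcal{O}(1/\lambda)$ from Lemma~\ref{lemma:yz:diffs}, one gets $\langle q_t-q_\mu^*, h_t\rangle \ge \lambda_g\|q_t-q_\mu^*\|^2 - \bigl(\mathcal{O}(1/\lambda) + L_{2,g}(\|q_\mu^*\|+\|q_t\|)\bigr)\|q_t-q_\mu^*\|\,\|z_t-z_\mu^*\|$, where the $\mathcal{O}(1/\lambda)$ stems from the gap between the Hessian of $g_{0,\mu}$ and that of $\lambda^{-1}f_{0,\mu}+g_{0,\mu}$. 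Young's inequality against $\tfrac{\lambda_g}{4}\|q_t-q_\mu^*\|^2$ converts the cross terms into coefficients of order $\mathcal{O}(\beta/\lambda^2)$ on $\|y_t-y_\mu^*\|^2$ and $\|z_t-z_\mu^*\|^2$, plus residuals of true order $\mathcal{O}(\beta/\lambda^4)$ and a term $\mathcal{O}(c\lambda_g\beta)\|q_t-q_\mu^*\|^2$ that folds back into the contraction once $c$ is small. For the noise, the key point --- precisely the cancellation used in Lemma~\ref{lemma:G_diff_var} --- is that the two $G$-evaluations in the $y_t$- and $z_t$-updates share $v^{(1)}_t,\zeta^{(1)}_t$, so their difference has second moment $\mathcal{O}(\mu^2 m^3 + m\|q_t\|^2)$, whereas the $F$-part enters only through the factor $\lambda^{-1}$ and so costs only $\mathcal{O}(m/\lambda^2)$; hence $\EE_{|t}\|\beta\xi_t\|^2 = \beta^2\mathcal{O}(1/\lambda^2 + \mu^2 m^3 + m\|q_t\|^2)$. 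Putting these together gives $\EE_{|t}\|q_{t+1}-q_\mu^*\|^2 \le (1-\lambda_g\beta)\|q_t-q_\mu^*\|^2 + \mathcal{O}(\beta/\lambda^2)\bigl(\|y_t-y_\mu^*\|^2+\|z_t-z_\mu^*\|^2\bigr) + \mathcal{O}(\beta^2 m)\|q_t\|^2 + \mathcal{O}(\beta^2/\lambda^2 + \beta^2\mu^2 m^3 + \beta/\lambda^4)$.

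Adding the four recursions with the stated weights, choosing $C_y = \mathcal{O}(L_{1,f}^2/\lambda_g^2)$ makes $\tfrac{C_y}{\lambda^2}(1-\lambda_g\beta) + \mathcal{O}(\beta/\lambda^2) \le \tfrac{C_y}{\lambda^2}(1-\lambda_g\beta/2)$; likewise $C_z = \mathcal{O}(L_{2,g}^2 L_{f,0}^2/\lambda_g^4)$ absorbs the Hessian-Lipschitz cross terms carried by $\|z_t-z_\mu^*\|^2$, and $C_q = \mathcal{O}(L_{1,g}^2/\lambda_g)$ together with $\beta \le c\lambda_g/(L_{1,g}^2 m)$ lets the (contracting, by Lemma~\ref{lemma:yz_diff_descent}) term $\beta C_q\|q_t\|^2$ swallow the stray $\mathcal{O}(\beta^2 m)\|q_t\|^2$; finally $L_{1,g}^2\mu^2 m^3 \le c$ and the lower bound on $\lambda$ reduce every leftover additive constant to order $\mathcal{O}(\beta^2/\lambda^2) + \mathcal{O}(\beta/\lambda^4)$. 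This yields $\EE_{|t}\mathbb{V}^L_{t+1} \le (1-\lambda_g\beta/2)\mathbb{V}^L_t + \mathcal{O}(\beta^2/\lambda^2) + \mathcal{O}(\beta/\lambda^4)$; unrolling from $t=0$ and dividing the accumulated error by $\lambda_g\beta/2$ gives $\mathbb{V}^L_t \le (1-\lambda_g\beta/2)^t\mathbb{V}^L_0 + \mathcal{O}(\beta/\lambda^2) + \mathcal{O}(1/\lambda^4)$, with the expectation over the inner-loop randomness, as in the statement. I expect the main obstacle to be the derivation and bookkeeping of the $q$-recursion: keeping the $F$-noise behind the $\lambda^{-1}$ factor, checking that the $G$-noise difference is genuinely governed by $\|q_t\|^2$ (so that the Lemma~\ref{lemma:yz_diff_descent}-driven term $\beta C_q\|q_t\|^2$ suffices despite the dimension factor $m$), and verifying that each remaining cross term is either $\mathcal{O}(\beta/\lambda^2)$ times one of the auxiliary potential terms or of true order $1/\lambda^4$ --- all with the constants $C_y, C_z, C_q$ and the smallness constant $c$ chosen consistently across the four recursions.
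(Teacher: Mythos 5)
Your proposal is correct and follows essentially the same route as the paper: the same one-step recursion for $\|q_{t+1}-q_\mu^*(x_k)\|^2$ obtained from strong monotonicity of $g_{0,\mu}$ plus the stationarity identity of Lemma~\ref{lemma:yz_diff_mu} (yielding the $\mathcal{O}(\beta/\lambda^4)$ residual and the $\mathcal{O}(\beta/\lambda^2)$-weighted cross terms in $\|y_t-y_\mu^*\|^2,\|z_t-z_\mu^*\|^2$), the same exploitation of the shared-noise cancellation for the $G$-difference, and the same weighted combination with Lemmas~\ref{lemma:control_y_z} and~\ref{lemma:yz_diff_descent} before unrolling.
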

We note here that while our proof follows the high-level idea presented in \cite{kwon2024complexity}, our algorithm does not rely on artificial projection steps.
\begin{proof}
    For simplicity, we denote $q_\mu^*(x_k)$ as $q^*$, $y_\mu^*(x_k)$ as $y^*$, $z_\mu^*(x_k)$ as $z^*$, and $x_k$ as $x$. Note that
    \begin{align*}
        \EE_{|t} \|q_{t+1} - q^*\|^2 &= \|q_{t+1} - q_t + q_t - q^*\|^2 \\
        &= \|q_t - q^*\|^2 + \EE_{|t} \|q_{t+1} - q_t\|^2 + 2 \EE_{|t} \vdot{q_{t+1} - q_t}{q_{t} - q^*} \\
        &= \|q_t - q^*\|^2 + \beta^2 \EE_{|t} \|\lambda^{-1} \tilde{\nabla}_y F_{0,\mu}^t + \tilde{\nabla}_y G_{0,\mu}^t - \tilde{\nabla}_y G_{0,\mu}^t  \|^2 \\
        &\quad - 2\beta \vdot{\lambda^{-1} \nabla_y f_{0,\mu}(x,y_t) + \nabla_y g_{0,\mu}(x,y_t) - \nabla_y g_{0,\mu}(x,z_t)}{q_{t} - q^*}
    \end{align*}
    The key is to observe the following: 
    \begin{align*}
        \EE_{|t} &\|q_{t+1} - q^* \|^2 \\&= \|q_{t} - q^*\|^2 + \beta^2 L_{1,G}^2 \|y_t-z_t\|^2 + \mathcal{O}(\beta^2/\lambda^2) \cdot (L_{0,f}^2 + m \sigma_{1,F}^2) \\
        &\quad - 2 \beta \vdot{\nabla_y g_{0,\mu}(x,y_t) - \nabla_y g_{0,\mu}(x, z_t+q^*)}{y_t-(z_t+q^*)} \\
        &\quad - 2 \beta \vdot{\nabla_y g_{0,\mu}(x,z_t+q^*) - \nabla_y g_{0,\mu}(x,z_t) + \lambda^{-1} \nabla_y f_{0,\mu}(x,y_t)}{y_t-(z_t+q^*)}.
    \end{align*}
    From strong-monotonicity of $g_{0,\mu}$ (inherited from $g$), we have
    \begin{align*}
        \vdot{\nabla_y g_{0,\mu}(x,y_t) - \nabla_y g_{0,\mu}(x,z_t+q^*)}{y_t-(z_t+q^*)} &\ge \lambda_g \|y_t-z_t - q^*\|^2 \\&= \lambda_g \|q_t - q^*\|^2.
    \end{align*}
    Then observe that 
    \begin{align*}
        &\nabla_y g_{0,\mu}(x,z_t+q^*) - \nabla_y g_{0,\mu}(x,z_t) + \lambda^{-1} \nabla_y f_{0,\mu}(x,y_t) \\
        &\quad = \nabla^2_{yy} g_{0,\mu}(x,z_t) q^* + \mathcal{O}( L_{g,2} \|q^*\|^2)  + \lambda^{-1} \nabla_y f_{0,\mu}(x,y_t) \\
        &\quad = \underbrace{\nabla^2_{yy} g_{0,\mu}(x,z^*) q^* + \lambda^{-1} \nabla_y f_{0,\mu}(x,y^*)}_{(a): \mathcal{O}(1/\lambda^2)} + \underbrace{(\nabla^2_{yy} g_{0,\mu}(x,z_t) - \nabla^2_{yy} g_{0,\mu}(x,z^*)) q^* }_{\mathcal{O}(L_{2,g} \|z_t-z^*\|/ \lambda) } \\
        &\quad \quad + \mathcal{O}(L_{2,g} \underbrace{\|q^*\|^2}_{(b): \mathcal{O}(1/\lambda^2)}) + \lambda^{-1} \underbrace{(\nabla_y f_{0,\mu}(x,y_t) - \nabla_y f_{0,\mu}(x,y^*))}_{\mathcal{O}(L_{1,f} \|y_t-y^*\|)},
    \end{align*}
    where (a) and (b) come from Lemma \ref{lemma:yz_diff_mu} of the Supplement, which is a generalization of Lemma \ref{lemma:yz:diffs}. Thus, we have shown that
    \begin{align}
        \EE_{|t}\|q_{t+1} - q^*\|^2 &= (1-\lambda_g \beta) \|q_t - q^*\|^2 + \beta^2 L_{1,G}^2 \|y_t - z_t\|^2 + \mathcal{O}\left(\frac{\beta L_{0,f}^2 + \beta^2 m \sigma_{1,F}^2}{\lambda^2}\right)  \nonumber\\
        &\quad + \mathcal{O} \left( \beta \frac{L_{1,g}^2 L_{2,g}^2 L_{0,f}^4}{\lambda^4 \lambda_g^7} \right) + \mathcal{O}\left(\beta \frac{L_{2,g}^2 L_{0,f}^2}{\lambda^2 \lambda_g^3} \right) \|z_t - z^*\|^2 \nonumber \\
        &\quad + \mathcal{O}\left(\beta \frac{L_{1,f}^2}{\lambda^2 \lambda_g} \right) \|y_t - y^*\|^2. \label{eq:q_descent_inequality}
    \end{align}
    Recalling the results from Lemma \ref{lemma:control_y_z} and \ref{lemma:yz_diff_descent}, we have (omitting higher-order terms, see \ref{sec:3proof5} for detailed constants) 
    \begin{align*}
        \|q_{t+1}\|^2 - \|q_t\|^2 &\le -\beta\lambda_g  \|q_t\|^2 + \mathcal{O}\left( \frac{ \beta m L_{0,f}^2}{ \lambda^2 \lambda_g} \right), \\
        \|y_{t+1} - y^*\|^2 - \|y_t - y^*\|^2 &\le -\beta\lambda_g \|y_t - y^*\|^2 + \mathcal{O}(\beta^2 m \sigma_{1,G}^2), \\
        \|z_{t+1} - z^*\|^2 - \|z_t - z^*\|^2 &\le -\beta\lambda_g \|z_t - z^*\|^2 + \mathcal{O}( \beta^2 m \sigma_{1,G}^2).
    \end{align*}    
    Combining the above inequalities with \eqref{eq:q_descent_inequality}, we get 
    \begin{align*}
        \mathbb{V}_{t+1}^L - \mathbb{V}_t^L &\le
        (\|q_{t+1} - q_\mu^*(x_k)\|^2 - \|q_{t} - q_\mu^*(x_k)\|^2) + \beta C_q (\|q_{t+1}\|^2 - \|q_t\|^2) \\
        &\ + \frac{C_y}{\lambda^2} (\|y_{t+1} - y_{\mu}^*(x_k)\|^2 - \|y_{t} - y_{\mu}^*(x_k)\|^2) \\
        &\ + \frac{C_y}{\lambda^2} (\|z_{t+1} - z_{\mu}^*(x_k)\|^2 - \|z_{t} - z_{\mu}^*(x_k)\|^2) \\
        &\le -\lambda_g \beta \|q_t - q_\mu^*(x_k)\|^2 - \left(\frac{C_y}{\lambda^2} \lambda_g \beta - \mathcal{O}\left( \frac{\beta L_{1,f}^2}{\lambda^2 \lambda_g} \right) \right) \|y_t - y^*_\mu(x_k)\|^2 \\
        &\ - \left(\frac{C_z}{\lambda^2} \lambda_g \beta - \mathcal{O} \left(\frac{\beta L_{0,f}^2 L_{2,g}^2 }{\lambda^2 \lambda_g^3} \right) \right) \|z_t - z^*_\mu(x_k)\|^2 \\
        &\ - \beta^2 (C_q \lambda_g - L_{1,G}^2 ) \|q_t\|^2 + \mathcal{O} \left( \beta \frac{L_{1,g}^2 L_{2,g}^2 L_{0,f}^4 }{\lambda^4 \lambda_g^7} \right) \\
        &\ + \beta^2 \cdot \mathcal{O} \left(\frac{m L_{0,f}^2 C_q}{\lambda^2 \lambda_g } + \frac{m \sigma_{1,G}^2 (C_y + C_z)}{\lambda^2}  \right) \\
        &\le -\frac{\lambda_g \beta}{2} \mathbb{V}_t^L + \mathcal{O} \left( \beta \frac{L_{1,g}^2 L_{2,g}^2 L_{0,f}^4 }{\lambda^4 \lambda_g^7} \right) \\ 
        & \ + \mathcal{O} \left(\frac{\beta^2 m}{\lambda^2}  \left( \frac{L_{0,f}^2 L_{1,G}^2}{\lambda_g^2} + \frac{\sigma_{1,G}^2 L_{1,f}^2}{\lambda_g^2} + \frac{\sigma_{1,G}^2 L_{0,f}^2 L_{2,g}^2}{\lambda_g^4} \right) \right),
    \end{align*}
    with sufficiently large constants $C_y, C_z, C_q$ satisfying \eqref{eq:C_yzq_order}. 
    Unfolding this inequality recursively, we obtain (omitting constants in higher-order terms)
    \begin{align*}
        \mathbb{V}_{t+1}^L \le (1-\beta\lambda_g/2)^t \mathbb{V}_0^L +  \frac{1}{\lambda^4} \cdot \mathcal{O}\left(\frac{L_{1,g}^2 L_{2,g}^2 L_{f,0}^4}{\lambda_g^8} \right) + \mathcal{O}\left(\frac{\beta m}{\lambda^2} \right),
    \end{align*}
    which yields $\|q_{t_k} - q_k^*\|^2 = \mathcal{O}(\beta/\lambda^2) + \mathcal{O}(1/\lambda^4)$ with $t_k = \mathcal{O}(1/\beta)$, justiying the choice $\lambda^2 = \mathcal{O} (\epsilon^{-1}), t_k = \beta^{-1} = \mathcal{O} (m \epsilon^{-1})$.
\end{proof}

\subsection{Convergence Analysis}
To do so, we require boundedness of  $4^{th}$-order moment of stochastic gradients of $f$ from Assumption~\ref{assumption:fourth_moment} which implies that,
for all $x \in X, y \in \mathbb{R}^{m}$, 
    \begin{align*}
        \EE \| \nabla_y F (x,y; \zeta) - \nabla_y f(x,y) \|^4 \le \mathcal{O} ( \sigma_{1,F}^4).
    \end{align*}
With this assumption, we can now bound the descent lemma for fourth-moment of $y$ and $z$ differences:
\begin{lemma}\label{descentLemmaFourthMoment}
    The fourth-moment of gradient estimators in \eqref{hypergrad_first_orde} is bounded as the following:
    \begin{align*}
        \EE_{|t} \|\tilde{\nabla}_y G_{0,\mu}^t - \tilde{\nabla}_z G_{0,\mu}^t\|^4 &\le \mathcal{O}(\eta^4 m^6) + \mathcal{O}(m^2) \|y_t - z_t\|^4. 
    \end{align*}
    Furthermore, suppose Assumption \ref{assumption:fourth_moment} holds. Let $\beta \le c \cdot \left( \lambda_g / (m L_{1,G}^2 + m \sigma_{1,F}^4) \right)$ and $\mu^2 \le \frac{c}{m^{3} L_{1,G}}$ for sufficiently small $c>0$. Then, the following holds:
    \begin{align*}
        \EE_{|t} \|y_{t+1} - z_{t+1}\|^4 &\le (1-\lambda_g \beta) \|y_t - z_t\|^4 + \mathcal{O}(\beta / \lambda^4). 
    \end{align*}    
\end{lemma}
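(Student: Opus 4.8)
\emph{First inequality.} Up to the shared noise variables $v=v_t^{(1)}\sim\mathcal N(0,I_m)$ and $\zeta=\zeta_t^{(1)}$, the vector $\tilde\nabla_y G_{0,\mu}^t-\tilde\nabla_z G_{0,\mu}^t$ is the difference of two forward-difference estimators of $\nabla_y g_{0,\mu}$ evaluated at $y_t$ and at $z_t$ with the \emph{same} random directions. The plan is to decompose it exactly as in the proof of Lemma~\ref{lemma:G_diff_var}: as the two second-order truncation remainders (one at $y_t$, one at $z_t$) plus the ``clean'' term $\langle\nabla_y G(x_k,y_t,\zeta)-\nabla_y G(x_k,z_t,\zeta),v\rangle v$, and then apply $\|a+b+c\|^4\le27(\|a\|^4+\|b\|^4+\|c\|^4)$. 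For each remainder, $L_{1,G}$-smoothness of $G(x_k,\cdot,\zeta)$ gives the pointwise bound $\tfrac{\mu L_{1,G}}{2}\|v\|^3$, so its fourth moment is at most $\tfrac{\mu^4L_{1,G}^4}{16}\,\EE\|v\|^{12}=\mathcal O(\mu^4m^6)$. For the clean term, set $\delta_\zeta=\nabla_y G(x_k,y_t,\zeta)-\nabla_y G(x_k,z_t,\zeta)$; conditioning on $\zeta$ makes $\langle\delta_\zeta,v\rangle$ Gaussian $\mathcal N(0,\|\delta_\zeta\|^2)$, so Cauchy--Schwarz gives $\EE\|\langle\delta_\zeta,v\rangle v\|^4\le(\EE|\langle\delta_\zeta,v\rangle|^8)^{1/2}(\EE\|v\|^8)^{1/2}=(105\,\EE\|\delta_\zeta\|^8)^{1/2}\cdot\mathcal O(m^2)\le\mathcal O(m^2)\,L_{1,G}^4\|y_t-z_t\|^4$, using $\|\delta_\zeta\|\le L_{1,G}\|y_t-z_t\|$. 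Summing the three contributions gives the bound (with $\mu$ playing the role of the $\eta$ written in the statement).

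\emph{Descent inequality --- linear term.} Write $q_t=y_t-z_t$, let $D_t$ be the difference of the two inner-loop $G$-estimators at $y_t$ and $z_t$, and $E_t=\lambda^{-1}\tilde\nabla_y F_{0,\mu}^t$, so that \eqref{eq:OptJointUpdate} gives $q_{t+1}=q_t-\beta(D_t+E_t)$. I would expand $\|q_{t+1}\|^4=(\|q_t\|^2-2\beta\langle q_t,D_t+E_t\rangle+\beta^2\|D_t+E_t\|^2)^2$ into its six monomials and take $\EE_{|t}$. The dominant one is $-4\beta\|q_t\|^2\langle q_t,\EE_{|t}[D_t+E_t]\rangle$: Stein's identity (Theorem~\ref{thStein}) makes the forward-difference estimator unbiased for the smoothed gradient, so $\EE_{|t}[D_t]=\nabla_y g_{0,\mu}(x_k,y_t)-\nabla_y g_{0,\mu}(x_k,z_t)$, and $\lambda_g$-strong monotonicity of $\nabla_y g_{0,\mu}(x_k,\cdot)$ (inherited from $g$ via Assumption~\ref{fg_assumption}) gives $\langle q_t,\EE_{|t}[D_t]\rangle\ge\lambda_g\|q_t\|^2$, while $\|\EE_{|t}[E_t]\|\le L_{0,f}/\lambda$ since $f$ is $L_{0,f}$-Lipschitz. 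Two applications of Young's inequality then give $-4\beta\|q_t\|^2\langle q_t,\EE_{|t}[D_t+E_t]\rangle\le-\tfrac32\lambda_g\beta\|q_t\|^4+\mathcal O(\beta/\lambda^4)$.

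\emph{Descent inequality --- remainder and the obstacle.} It remains to absorb the four higher-order monomials $2\beta^2\|q_t\|^2\|D_t+E_t\|^2$, $4\beta^2\langle q_t,D_t+E_t\rangle^2$, $-4\beta^3\langle q_t,D_t+E_t\rangle\|D_t+E_t\|^2$, and $\beta^4\|D_t+E_t\|^4$. For the second-moment factors I would reuse $\EE_{|t}\|D_t\|^2=\mathcal O(\mu^2m^3+m\|q_t\|^2)$ and $\EE_{|t}\|\tilde\nabla_y F_{0,\mu}^t\|^2=\mathcal O(\mu^2m^3+m)$ that already underlie Lemma~\ref{lemma:yz_diff_descent}; for the fourth-moment factors I would invoke the first part of this lemma for $D_t$ together with the companion estimate $\EE_{|t}\|\tilde\nabla_y F_{0,\mu}^t\|^4=\mathcal O(\mu^4m^6)+\mathcal O(m^2)$ --- and it is precisely here that Assumption~\ref{assumption:fourth_moment} is needed, since $\nabla_y F$ is controlled only in fourth moment, unlike $\nabla_y G$ whose stochastic realizations are genuinely $L_{1,G}$-smooth. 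The $\beta^3$ monomial additionally needs a H\"older step $\EE_{|t}\|D_t+E_t\|^3\le(\EE_{|t}\|D_t+E_t\|^4)^{3/4}$ before the same bookkeeping applies. After these substitutions and a few AM--GM splittings (e.g.\ $\mu^2m^3\|q_t\|^2\le\tfrac12\mu^4m^6+\tfrac12\|q_t\|^4$), every term is either of the form $c\lambda_g\beta\|q_t\|^4$ with $c$ as small as one wishes --- the $m\|q_t\|^4$ factor from $\beta^2\|q_t\|^2\|D_t+E_t\|^2$ being the binding one, which is exactly what forces $\beta\le c\lambda_g/(mL_{1,G}^2+m\sigma_{1,F}^4)$, and the $\beta^3m^{3/2}$ and $\beta^4m^2$ prefactors of $\|q_t\|^4$ being killed by the same bound --- or a $\mu$-dependent residual which, using $\mu^2\le c/(m^3L_{1,G})$ together with the global scaling $\beta=\mathcal O(\epsilon/m)$, $\lambda^{-2}=\mathcal O(\epsilon)$ in force, collapses to $\mathcal O(\beta/\lambda^4)$ (up to terms of strictly higher order in $\beta$ and $\mu$ that are dominated once all parameters are fixed). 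Collecting gives $\EE_{|t}\|q_{t+1}\|^4\le(1-\tfrac32\lambda_g\beta+c\lambda_g\beta)\|q_t\|^4+\mathcal O(\beta/\lambda^4)\le(1-\lambda_g\beta)\|q_t\|^4+\mathcal O(\beta/\lambda^4)$. The main obstacle is exactly this accounting: one must verify simultaneously, across all four higher-order monomials, that every $\|q_t\|^4$-proportional contribution stays under the $\tfrac32\lambda_g\beta\|q_t\|^4$ margin, which is what pins down the admissible ranges of $\beta$ and $\mu$.
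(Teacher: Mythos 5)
Your proposal follows essentially the same route as the paper's proof: the same three-way decomposition with the $\|a+b+c\|^4\le 27(\cdot)$ splitting, pointwise smoothness bounds on the two truncation remainders, Cauchy--Schwarz plus the eighth Gaussian moment for the cross term, and then the expansion of the quartic recursion into monomials absorbed via unbiasedness, strong monotonicity of $\nabla_y g_{0,\mu}$, and Young's inequality, with Assumption~\ref{assumption:fourth_moment} invoked exactly where the fourth moment of $\tilde\nabla_y F_{0,\mu}^t$ is needed. Your observation that the $\eta$ in the first display should read $\mu$ is also consistent with the paper's own proof, which works throughout with the smoothing parameter $\mu$ of the inner-loop estimators.
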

\begin{proof}
    See Section \ref{sec:3proof6} of the Supplement. 
\end{proof}

Now, combining all results, we can state our main theorem on the convergence of penalty-based zeroth-order methods: 
\begin{theorem}(Convergence of Algorithm \ref{alg_ZBSA2})
    Suppose that $\{\bar{y}_k, \bar{z}_k, x_k\}_{k\ge 0}$ is generated by Algorithm \ref{alg_ZBSA2}, Assumptions \ref{fg_assumption}-\ref{assumption:fourth_moment} hold, and 
    \begin{align*}
        \alpha = \frac{1}{5 L_{1,\psi}}, \ \beta = \mathcal{O} \left( \frac{\epsilon}{m} \right), \\
        \lambda^2 = \Omega(\epsilon^{-1}), \ s = \mathcal{O}(n / \epsilon), \ t = \mathcal{O}(m / \epsilon), \\
        \eta = \mathcal{O} \left(\sqrt{\min \left(\frac{1}{\lambda^2 n^3}, \frac{\epsilon}{n^3} \right) } \right), \ \mu= \mathcal{O} \left(\sqrt{\epsilon / m^3}\right),
    \end{align*}
    with a sufficiently small target accuracy $\epsilon > 0$. Let $\alpha_k = \alpha, \beta_k = \beta, s_k = s, t_k = t$ for all $k \ge 0$. Then after $N$ outer iterations, the following holds:
    \begin{align*}
        \EE\|\nabla \psi(x_R)\|^2 &\le \frac{\mathbb{V}^U_0}{\alpha N} + \mathcal{O} \left(\beta m + \frac{\alpha n}{s} + \frac{1}{\lambda^2} \right),
    \end{align*}
    where $R$ is a random variable following the distribution in \eqref{def_R}, and $\mathbb{V}^U_{k}$ is the outer potential function defined as
    \begin{align*}
        \mathbb{V}^U_{k} &:= \psi(x_k) - \psi(x^*) + \alpha \lambda^2 \cdot \mathcal{O} \left( \mathbb{V}^L_k + \frac{\alpha n}{s} \|q_k\|^2 + \|q_k\|^4\right),
    \end{align*}
    where $q_k := \bar{y}_k - \bar{z}_k$ and $\mathbb{V}_k^L$ is defined in \eqref{eq:lower_potential} at $t=0$ for each $k$.
\end{theorem}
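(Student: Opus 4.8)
The plan is to prove a one-step dissipation inequality for the composite Lyapunov function $\mathbb{V}^U_k$ and then telescope. I take $X=\mathbb{R}^n$ for notational simplicity (the general constrained case follows by replacing every gradient by its gradient mapping, exactly as in the footnote to Theorem~\ref{theom_main_bsa}), and I assume the inner loop at outer step $k$ is warm-started at $(\bar y_{k-1},\bar z_{k-1})$, so that the initial value of the inner potential \eqref{eq:lower_potential} at step $k$ equals $\mathbb{V}^L_k$. First, using $L_{1,\psi}$-smoothness of $\psi$ and $x_{k+1}=x_k-\alpha\tilde\nabla\mathcal{L}^k_{\eta,\mu}$, expanding $\psi(x_{k+1})$, taking $\bbe_{|k}$, splitting $\langle\nabla\psi(x_k),\bbe_{|k}[\tilde\nabla\mathcal{L}^k_{\eta,\mu}]\rangle=\|\nabla\psi(x_k)\|^2+\langle\nabla\psi(x_k),\mathrm{bias}\rangle$ and bounding the cross term by $2ab\le a^2+b^2$, and substituting $\alpha=1/(5L_{1,\psi})$, one obtains
\begin{align*}
\bbe_{|k}[\psi(x_{k+1})]\le\psi(x_k)-\tfrac{\alpha}{4}\|\nabla\psi(x_k)\|^2+\mathcal{O}(\alpha)\,\mathrm{B}_k^2+\mathcal{O}(\alpha^2)\,\texttt{Var}_{|k}(\tilde\nabla\mathcal{L}^k_{\eta,\mu}),
\end{align*}
where $\mathrm{B}_k:=\|\bar\nabla\mathcal{L}^k_{\eta,\mu}-\nabla\psi(x_k)\|$ and $\bar\nabla\mathcal{L}^k_{\eta,\mu}$ is as in Lemma~\ref{lemma:inexact_first_bias}.

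Next I bookkeep the bias and the variance. Writing $\bar\nabla\mathcal{L}^k_{\eta,\mu}-\nabla\psi(x_k)$ as the sum of $\bar\nabla\mathcal{L}^k_{\eta,\mu}-\nabla\mathcal{L}^*_{\eta,\mu}(x_k)$, $\nabla\mathcal{L}^*_{\eta,\mu}(x_k)-\nabla\mathcal{L}^*(x_k)$, and $\nabla\mathcal{L}^*(x_k)-\nabla\psi(x_k)$, I bound the three pieces by Lemma~\ref{lemma:inexact_first_bias}, Lemma~\ref{lemma:zeroth_first_order_approx_bias}, and Lemma~\ref{lemma:fully_my_lemma}, respectively. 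Using $\|q_\mu^*(x_k)\|=\mathcal{O}(1/\lambda)$ (the $\mu$-analogue of Lemma~\ref{lemma:yz:diffs}) and the stated $\eta,\mu$ (so that $\eta^2n^3,\mu^2m^3=\mathcal{O}(\epsilon)$ and $\lambda^2\eta^2n^3=\mathcal{O}(1)$) yields
\begin{align*}
\mathrm{B}_k^2=\mathcal{O}\!\Big(\epsilon+\tfrac1{\lambda^2}+\|\bar y_k-y_\mu^*(x_k)\|^2+\lambda^2\|q_k-q_\mu^*(x_k)\|^2+\lambda^2\|q_k\|^4\Big),
\end{align*}
while Lemma~\ref{lemma:G_diff_var} with $s_k=s=\mathcal{O}(n/\epsilon)$ gives $\texttt{Var}_{|k}(\tilde\nabla\mathcal{L}^k_{\eta,\mu})=\mathcal{O}\big(\epsilon+\tfrac{\lambda^2n}{s}\|q_k\|^2\big)$. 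Hence the contributions to the per-step increase that are not already $\mathcal{O}(\alpha\epsilon)$ are exactly $\alpha\|\bar y_k-y_\mu^*(x_k)\|^2$, $\alpha\lambda^2\|q_k-q_\mu^*(x_k)\|^2$, $\alpha\lambda^2\|q_k\|^4$, and $\alpha^2\tfrac{\lambda^2n}{s}\|q_k\|^2$.

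The core of the argument is to absorb these four terms into $\mathbb{V}^U_k=\psi(x_k)-\psi(x^*)+\alpha\lambda^2\,\mathcal{O}\!\big(\mathbb{V}^L_k+\tfrac{\alpha n}{s}\|q_k\|^2+\|q_k\|^4\big)$. First, $\|\bar y_k-y_\mu^*(x_k)\|^2$, $\|z_{t_k}-z_\mu^*(x_k)\|^2$, $\|q_k-q_\mu^*(x_k)\|^2$ and $\|q_k\|^2$ are all dominated, after the appropriate $\lambda^2$- and $\beta^{-1}$-rescalings, by $\mathbb{V}^L_{t_k}$ at step $k$, and the lemma bounding $\mathbb{V}^L_t$ above gives $\mathbb{V}^L_{t_k}\le(1-\lambda_g\beta/2)^{t_k}\mathbb{V}^L_k+\mathcal{O}(\beta m/\lambda^2+1/\lambda^4)$; likewise Lemma~\ref{descentLemmaFourthMoment} gives $\bbe\|q_k\|^4\le(1-\lambda_g\beta)^{t_k}\|q_{k-1}\|^4+\mathcal{O}(1/\lambda^4)$. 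With $t_k=t=\mathcal{O}(m/\epsilon)$ and $\beta=\mathcal{O}(\epsilon/m)$ the contraction factors are made $\le1/4$. Second, passing from $x_k$ to $x_{k+1}$ perturbs the reference points inside $\mathbb{V}^L_{k+1}$: $y_\mu^*$ and $z_\mu^*$ are $\mathcal{O}(1)$-Lipschitz in $x$ but appear in $\mathbb{V}^L$ with weight $1/\lambda^2$, whereas $q_\mu^*$ --- which by the $\mu$-analogue of Lemma~\ref{lemma:yz:diffs} equals $\tfrac1\lambda$ times an $\mathcal{O}(1)$-Lipschitz map plus an $\mathcal{O}(1/\lambda^2)$ term, hence is $\mathcal{O}(1/\lambda)$-Lipschitz in $x$ --- appears with weight $1$; in both cases the drift times its coefficient in $\mathbb{V}^U_k$ is $\mathcal{O}(\alpha^3)\,\bbe_{|k}\|\tilde\nabla\mathcal{L}^k_{\eta,\mu}\|^2=\mathcal{O}(\alpha^3)\big(\|\nabla\psi(x_k)\|^2+\mathrm{B}_k^2+\texttt{Var}_{|k}\big)$, with no surviving power of $\lambda$, so (taking $L_{1,\psi}$ an upper bound large enough that $\alpha$ is small) it is absorbed by the $-\tfrac{\alpha}{4}\|\nabla\psi(x_k)\|^2$ margin and by the inner contractions. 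Choosing the hidden constants in $\mathbb{V}^U_k$ large enough so that the geometric decay of $\mathbb{V}^L$ and $\|q_k\|^4$ dominates the four problematic terms, one arrives at
\begin{align*}
\bbe_{|k}[\mathbb{V}^U_{k+1}]\le\mathbb{V}^U_k-\tfrac{\alpha}{8}\|\nabla\psi(x_k)\|^2+\mathcal{O}\!\Big(\alpha\beta m+\tfrac{\alpha^2n}{s}+\tfrac{\alpha}{\lambda^2}\Big).
\end{align*}

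Finally, taking total expectations, summing over $k=0,\dots,N-1$, using $\mathbb{V}^U_N\ge0$ (since $\psi(x_N)\ge\psi(x^*)$ and every other summand is nonnegative), dividing by $\alpha N/8$, and recalling that $R$ is distributed as in \eqref{def_R} --- uniform on $\{0,\dots,N-1\}$ since $\alpha_k\equiv\alpha$ --- yields $\bbe\|\nabla\psi(x_R)\|^2\le\mathbb{V}^U_0/(\alpha N)+\mathcal{O}(\beta m+\alpha n/s+\lambda^{-2})$, as claimed. The main obstacle is the third paragraph: the outer bias and variance are scaled by $\lambda^2=\Omega(\epsilon^{-1})$ times the inner tracking errors $\|q_k-q_\mu^*\|$, $\|q_k\|$, $\|\bar y_k-y_\mu^*\|$, while those errors are themselves driven by the $x$-dependent targets $y_\mu^*(x_k),q_\mu^*(x_k)$; the resolution is to couple the outer and inner potentials in the single function $\mathbb{V}^U$ and to exploit that the $\lambda$-sensitive part of the lower problem, $q_\mu^*$, moves only at rate $\mathcal{O}(1/\lambda)$ in $x$, so that the $\lambda^2$ weights never amplify the cross-loop drift.
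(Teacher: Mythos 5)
Your proposal is correct and follows essentially the same route as the paper's proof: the same descent inequality for $\psi$, the same three-way decomposition of the hyper-gradient bias via Lemmas~\ref{lemma:fully_my_lemma}, \ref{lemma:zeroth_first_order_approx_bias} and \ref{lemma:inexact_first_bias}, the same variance bound from Lemma~\ref{lemma:G_diff_var}, and the same absorption of the $\lambda^2$-weighted inner tracking errors into the composite potential $\mathbb{V}^U_k$ via the inner-loop contraction lemmas, followed by telescoping. The only difference is that you make explicit the warm-start convention and the drift of the reference points $y_\mu^*(x_k)$, $q_\mu^*(x_k)$ across outer iterations, which the paper's listed recursions treat implicitly.
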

\begin{proof}
We first note that combining Lemma \ref{lemma:fully_my_lemma}, \ref{lemma:zeroth_first_order_approx_bias}, and \ref{lemma:inexact_first_bias}, the bias in hyper-gradient estimation can be bounded by (omitting smoothness and strong-convexity parameters):
\begin{align*}
    \|\nabla \psi(x_k) - \bar{\nabla} \mathcal{L}_{\eta,\mu}^k\| &\le \|\nabla \psi(x_k) - \nabla \mathcal{L}^*(x_k) \| \\
    &\quad + \| \nabla \mathcal{L}^*(x_k) - \nabla \mathcal{L}_{\eta, \mu}^*(x_k) \| + \| \nabla \mathcal{L}_{\eta, \mu}^*(x_k) - \bar{\nabla} \mathcal{L}_{\eta,\mu}^k\|
    \\ &\le \mathcal{O} (\eta n^{3/2} + \mu m^{3/2}) + \mathcal{O} (1) \|\bar{y}_k - y_{\mu}^*(x_k)\| \\
    &\quad + \lambda L_{1,g} \|q_k-q_\mu^*(x_k)\| + \lambda L_{2,g} \|q_k\|^2 + \mathcal{O} (1 / \lambda),
\end{align*}
and the variance:
\begin{align*}
    \texttt{Var}(\tilde{\nabla} \mathcal{L}_{\eta,\mu}^k) &\le \frac{\mathcal{O}(1)}{s_k} (n \sigma_{1,F}^2 + \lambda^2 \eta^2 n^3 + \lambda^2 n \|q_k\|^2).
\end{align*}
We start with the smoothness of $\psi(x)$:
\begin{align*}
    \psi(x_{k+1}) - \psi(x_k) &\le \vdot{\nabla \psi_k}{ x_{k+1} - x_k} + \frac{L_{1,\psi}}{2} \|x_{k+1} - x_k\|^2 \\
    &= -\alpha \vdot{\nabla \psi_k}{ \tilde \nabla \mathcal{L}_{\eta,\mu}^k } + \frac{L_{1,\psi} \alpha^2}{2} \|\tilde \nabla \mathcal{L}_{\eta,\mu}^k \|^2.
\end{align*}
Taking expectation on both sides 
\begin{align*}
    &\EE_{|k} [\psi(x_{k+1})] - \psi(x_k) \\
    &\le -\alpha \|\nabla \psi_k\|^2 + \alpha \vdot{\nabla \psi_k}{ - \bar{\nabla} \mathcal{L}_{\eta,\mu}^k + \nabla \psi_k} + \frac{L_{1,\psi} \alpha^2}{2} (\|\bar \nabla \mathcal{L}_{\eta,\mu}^k \|^2 + \texttt{Var}(\tilde{\nabla} \mathcal{L}_{\eta,\mu}^k) ) \\
    &\le -\frac{\alpha}{2} \|\nabla \psi_k\|^2 + \mathcal{O}(1) \cdot \alpha \|\nabla \psi_k - \bar \nabla \mathcal{L}_{\eta, \mu}^k\|^2 + \frac{L_{1,\psi} \alpha^2}{2} \texttt{Var}(\tilde{\nabla} \mathcal{L}_{\eta,\mu}^k) \\
    &\le -\frac{\alpha}{2} \|\nabla \psi_k\|^2 + \mathcal{O}(\alpha) \left(\eta^2 n^3 + \mu^2 m^3 + \|\bar{y}_k - y_{\mu}^*(x_k)\|^2 \right) \\
    &\quad + \mathcal{O}(\alpha) \left(\lambda^2 L_{1,g}^2 \|q_k - q_\mu^*(x_k)\|^2 + \lambda^2 L_{2,g}^2 \|q_k\|^4 + \frac{1}{\lambda^2}   \right) \\
    &\quad + \frac{\mathcal{O}(\alpha^2)}{s} \left(n\sigma_{1,F}^2 + \lambda^2 \eta^2 n^3 + \lambda^2 n \|q_k\|^2 \right),
\end{align*}
given that $\alpha \ll 1/L_{1,\psi}$ and $s = \mathcal{O}(n/\epsilon)$. 
The list of inequalities we have is the following (omitting smoothness and strong-convexity parameters):
\begin{align*}
    \|q_{k+1}\|^2 &\le (1-\lambda_g \beta)^{t} \|q_k\|^2 + \mathcal{O}(1/\lambda^2), \\
    \|q_{k+1}\|^4 &\le (1-\lambda_g \beta)^{t} \|q_k\|^4 + \mathcal{O} (1/\lambda^4), \\
    \|q_{k+1}-q^*_\mu(x_{k+1})\|^2 \le \mathbb{V}_{k+1}^L &\le (1-\lambda_g \beta / 4)^{t} \mathbb{V}_k^L + \mathcal{O} (\beta m/\lambda^2) + \mathcal{O} (1/\lambda^4), \\
    \|\bar{y}_{k+1} - y_{\mu}^*(x_{k+1})\|^2 &\le (1-\lambda_g \beta)^{t} \|\bar{y}_{k} - y_{\mu}^*(x_k)\|^2 +  \mathcal{O}(\beta m),
\end{align*}
where $\mathbb{V}_k^L$ is defined in \eqref{eq:lower_potential} at $t=0$.

Now, let $t_k = \mathcal{O}(\lambda_g^{-1} \beta^{-1}) = \mathcal{O}(m/\epsilon)$. Putting all inequalities together it follows that 
\begin{align*}
    \mathbb{V}_{k+1}^U - \mathbb{V}_k^U \le -\alpha \|\nabla \psi_k \|^2 + \mathcal{O} \left( \alpha \beta m + \frac{\alpha^2 n}{s} + \frac{\alpha}{\lambda^2} \right).
\end{align*}
Telescoping sum from $k=0...N-1$ gives
\begin{align*}
    \sum_{k=0}^{N-1} \alpha \cdot \|\nabla \psi_k\|^2 \le \mathbb{V}_0^U + N\cdot \mathcal{O} \left(\alpha\beta m + \frac{\alpha^2 n}{s} + \frac{\alpha}{\lambda^2} \right).
\end{align*}
Dividing both sides by $\alpha N$, verifies the theorem's claim.
\end{proof}

From the theorem, we can conclude that to obtain an $\epsilon$-stationary point of the BLP by Algorithm~\ref{alg_ZBSA2}, the total outer-loop iterations should be $N = \mathcal{O}(\epsilon^{-1})$ with $\alpha = \mathcal{O}(1)$, yielding $Ns = \mathcal{O}(n/\epsilon^2)$ oracles access in the outer-loop iterations, and $Nt = \mathcal{O}(m/\epsilon^2)$ in the inner-loop overall. Thus, the total sample complexity of the algorithm is bounded by $\mathcal{O}((n+m)/\epsilon^2)$, which is optimal in terms of dependence on both problem dimension and target accuracy.

\section{Numerical Experiments}\label{numeric}
In this section, we present an experimental study aimed at evaluating the practical effectiveness of the proposed algorithms. Our focus is on the hyper-representation problem, which is commonly examined in prior bilevel programming studies \cite{franceschi2018bilevel,sow2022convergence,AghaGhad25}. In this setting, a regression (or classification) model is learned through a bilevel formulation, where the lower-level problem optimizes the parameters of a linear regressor, and the upper-level problem determines the embedding model parameters (i.e., the representation). The underlying BLP can be expressed as:
\begin{align*} \nonumber
&\minimize_{x} \left\{\psi(x):= f(x,y^*)= \frac{1}{2n_1}\left\|T(\chi_1;x)y^* - b_1 \right\|^2+\frac{\gamma}{2}\|x\|^2\right\}\\ 
& \text{subject to:} \ \  y^* = \argmin_y \ \frac{1}{2n_2} \left\|T(\chi_2;x)y - b_2 \right\|^2 + \frac{\gamma}{2}\|y\|^2.
\end{align*}
Here, $T(\chi_i, x)$ denotes a two-layer network, where $\chi_1$ and $\chi_2$ correspond to the training and validation datasets, respectively, and $b_1 \in \mathbb{R}^{n_1}$ and $b_2 \in \mathbb{R}^{n_2}$ are their associated response vectors. All experiments are conducted with $x \in \mathbb{R}^{64 \times 128}$, $y \in \mathbb{R}^{128}$, $n_1 = n_2 = 500$, $\gamma = 10^{-6}$, and smoothing parameters $\eta_1 = \eta_2 = \mu_1 = \mu_2 = 10^{-5}$. Rather than direct queries to $f$ and $g$, their stochastic oracles $F$ and $G$ are computed using the loss over a uniformly random subset of 5 rows from $T(\chi_i, x)$ and $b_i$ (corresponding to 1\% of the total sample size). Convergence plots report the norm of the hyper-gradient $\nabla \psi(x)$ against the number of oracle queries to $F$ and $G$. Each experiment is repeated 10 times, with the solid curve denoting the average performance and the shaded region representing the range across trials. The initializations of $x$ and $y$ are randomized with a fixed seed, ensuring identical starting points for all experiments. Finally, to provide a consistent measure of oracle complexity across different algorithms, query counts are scaled by batch size: for example, evaluating the objective on a batch of size 100 is recorded as 100 query calls, even though practically a vectorized implementation requires only one objective evaluation. 

We begin by comparing Algorithm \ref{alg_ZBSA} with the ZDSBAF in \cite{AghaGhad25}, which, to the best of our knowledge, is the first fully zeroth-order framework proposed for bilevel programs. In contrast to ZDSBAF, Algorithm \ref{alg_ZBSA} employs a mini-batch strategy to accelerate convergence ($s_k\gg 1$), and executes longer inner-loop iterations (larger $t_k$). As supported by the theory, these two strategies are sufficient to make a major improvement in convergence. To test this numerically, in an experiment, Algorithm \ref{alg_ZBSA} uses $t_k=512$, and a mini-batch of size $s_k=100$, while the ZDSBAF algorithm uses a shallower inner loop update ($t_k=64$) and no mini-batch sampling. Figure \ref{fig1} shows that with significantly fewer queries to $F$ and $G$, Algorithm \ref{alg_ZBSA} attains a much smaller hyper-gradient. 
\begin{figure}[!htbp]
\begin{center}
\begin{overpic}[trim={0.15cm -.25cm  0 0},clip,height=2.2in]{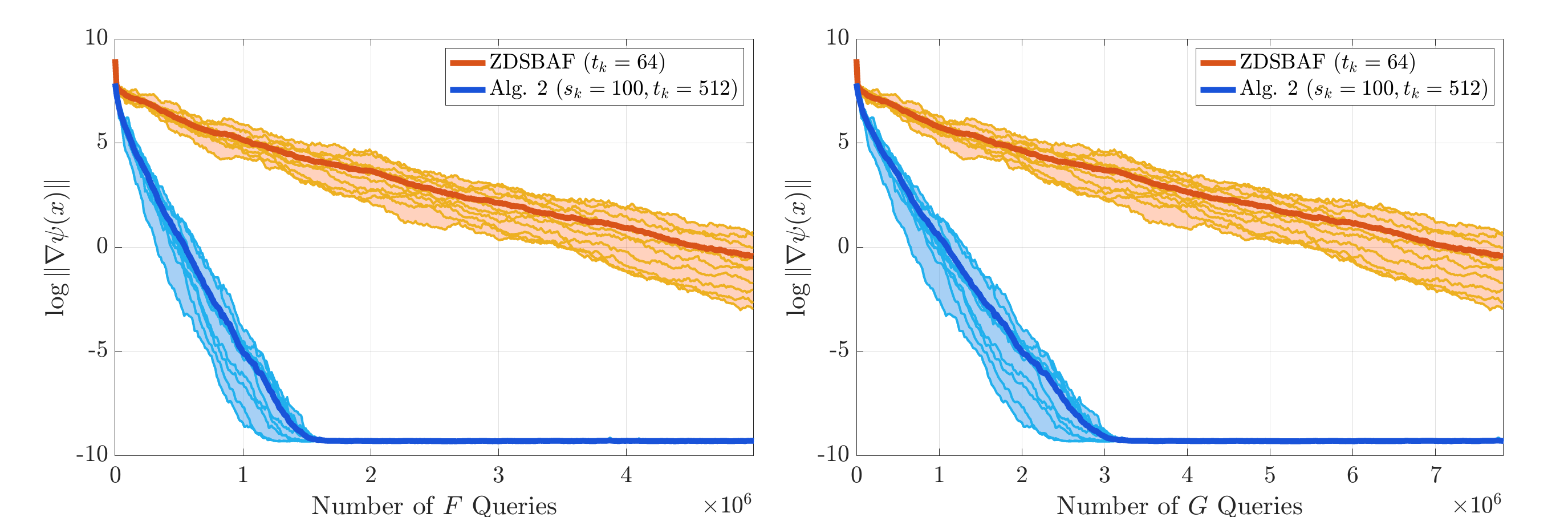}
\end{overpic}
\end{center}

		\caption{Comparing Algorithm \ref{alg_ZBSA} with the ZDSBAF framework in \cite{AghaGhad25}}\label{fig1}
\end{figure}

Next, we numerically analyze the affect of $s_k$ and $t_k$ on the convergence of Algorithm \ref{alg_ZBSA}. Interestingly, letting the inner-loop mature more by picking larger $t_k$ tends to improve the convergence more noticeable than the batch size. This has been demonstrated in Figure \ref{fig2} by varying $t_k$ from 64 to 256 and observing the increasing contrast with the ZDSBAF performance. On the other hand, beyond a small threshold (approximately 10), the batch size $s_k$ does not significantly improve the convergence speed, but rather reduces the variance in the convergence pattern as shown in Figure \ref{fig3}.

\begin{figure}[!htbp]
\begin{center}
\begin{overpic}[trim={0 -1.45cm  0 0},clip,height=2.2in]{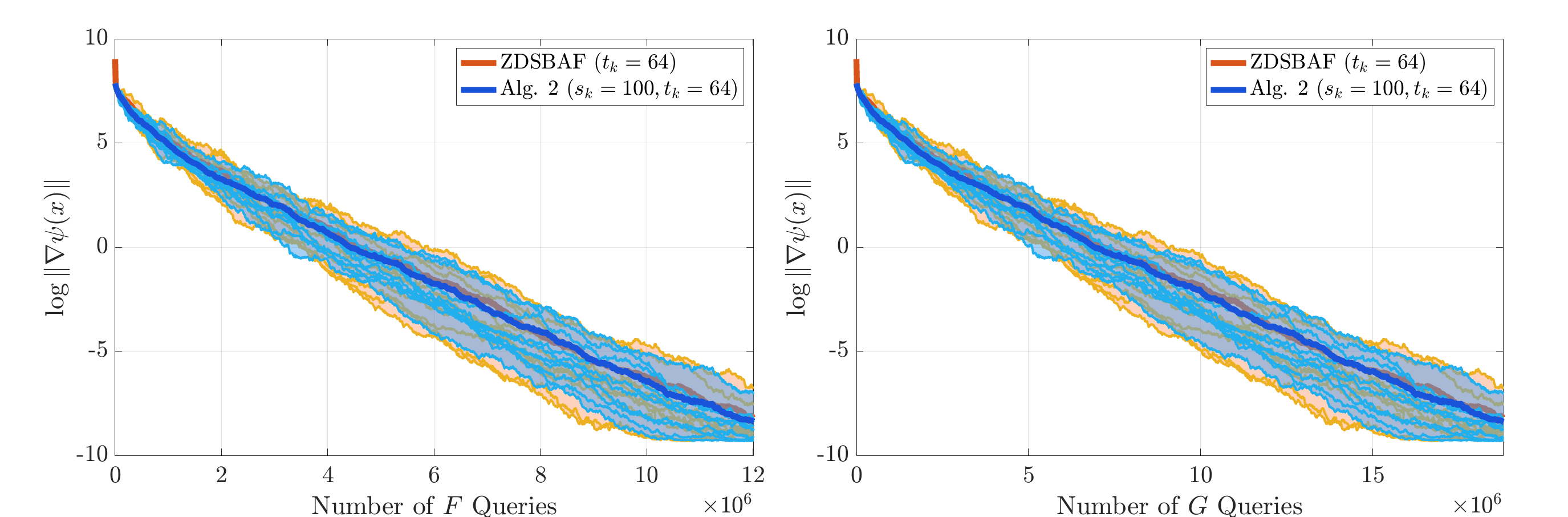}
\put (50,1) {\scalebox{.8}{\rotatebox{0}{(a)}}}
\end{overpic}\\
\begin{overpic}[trim={0 -1.45cm  0 0},clip,height=2.2in]{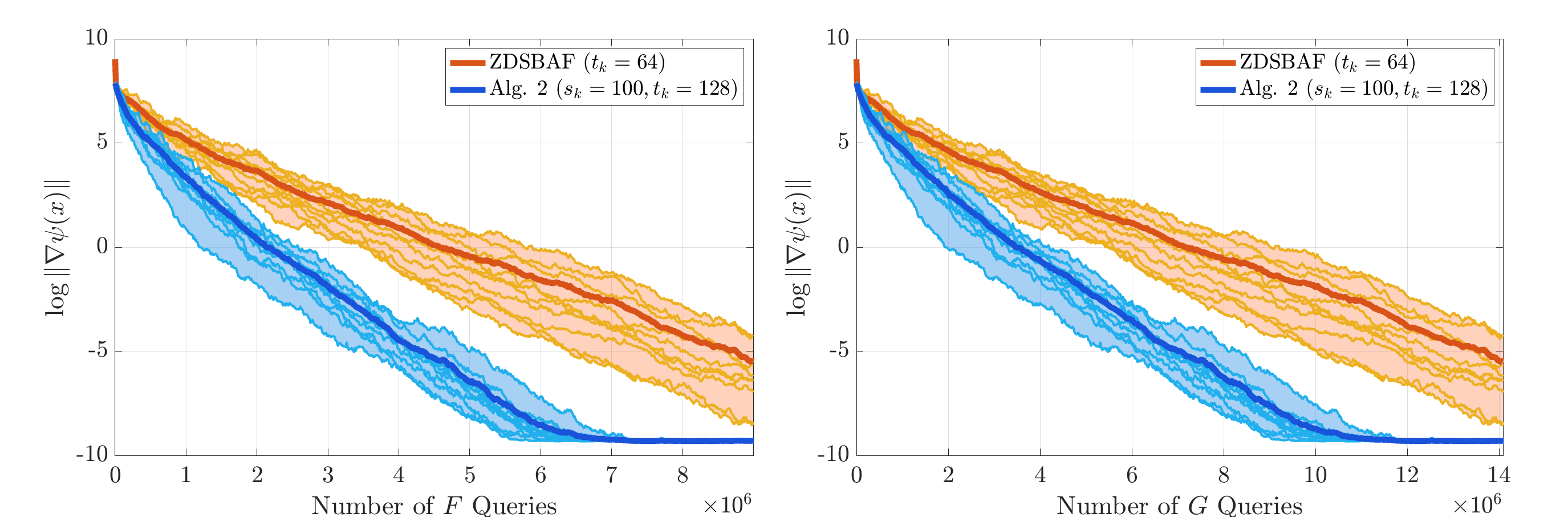}
\put (50,1) {\scalebox{.8}{\rotatebox{0}{(b)}}}
\end{overpic}\\
\begin{overpic}[trim={0 -1.45cm  0 0},clip,height=2.2in]{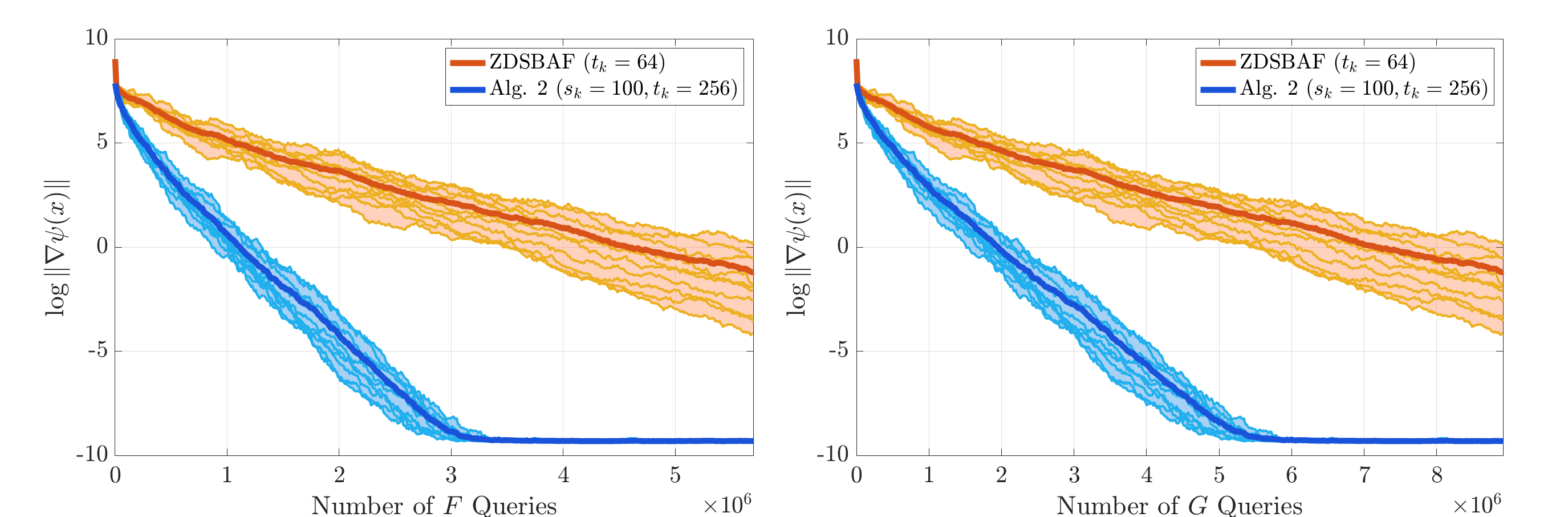}
\put (50,1) {\scalebox{.8}{\rotatebox{0}{(c)}}}
\end{overpic}
\end{center}

		\caption{ Comparing Algorithm \ref{alg_ZBSA} for different $t_k$ with the ZDSBAF framework in \cite{AghaGhad25} }\label{fig2}
\end{figure}

\begin{figure}[!htbp]
\begin{center}
\begin{overpic}[trim={0.15cm -.25cm  0 0},clip,height=2.2in]{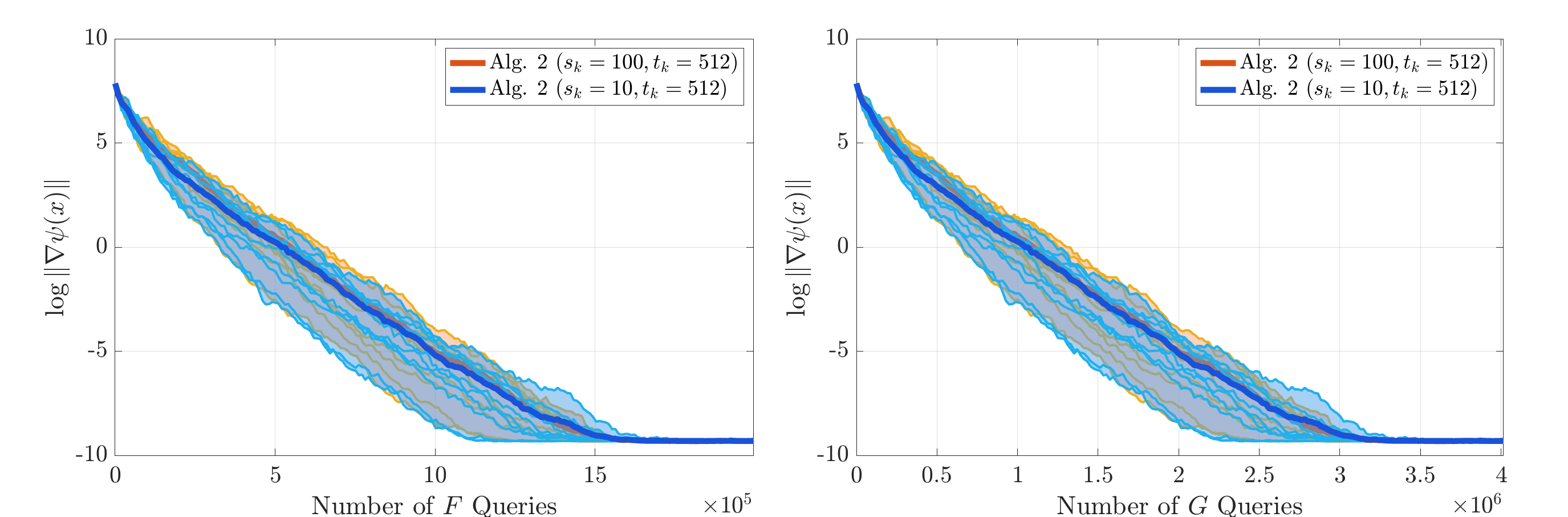}
\end{overpic}
\end{center}

		\caption{The performance of Algorithm \ref{alg_ZBSA} for different batch sizes}\label{fig3}
\end{figure}

Finally, we compare the performance of Algorithm \ref{alg_ZBSA} with Algorithm \ref{alg_ZBSA2}, whose optimality has been theoretically established. Figure \ref{fig4} shows the convergence of the two algorithms when the number of inner-loop iterations is fixed at $t_k=512$. Unlike Algorithm \ref{alg_ZBSA}, the effectiveness of Algorithm \ref{alg_ZBSA2} is less sensitive to the maturity of the inner-loop iterations. As the figure further illustrates, even when $t_k=64$, the performance of Algorithm \ref{alg_ZBSA2} remains close to that of $t_k=512$.
\begin{figure}[!htbp]
\begin{center}
\begin{overpic}[trim={0.15cm -.25cm  0 0},clip,height=2.2in]{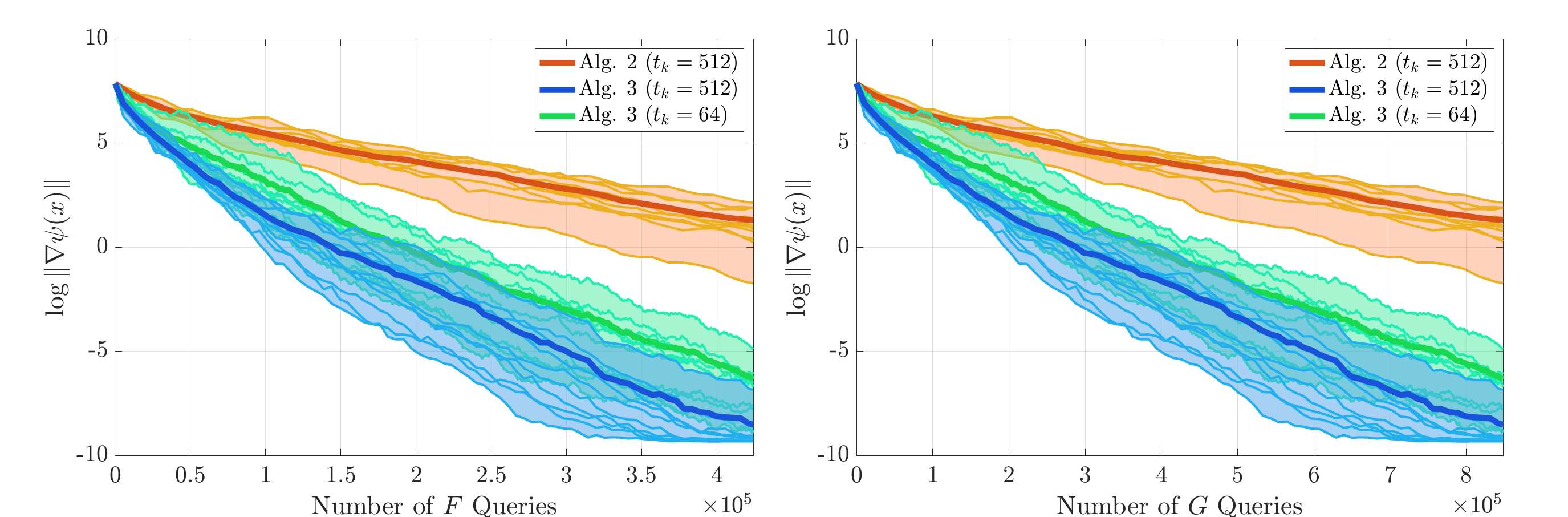}
\end{overpic}
\end{center}

		\caption{Comparing Algorithm \ref{alg_ZBSA} with Algorithm \ref{alg_ZBSA2} for identical and different $t_k$}\label{fig4}
\end{figure}

In all previous experiments with Algorithm \ref{alg_ZBSA}, we set $T=1000$ for the number of iterations used to approximate the Hessian inverse in Algorithm \ref{Hinv:alg1}, consistent with the theoretical requirement that $T$ scales cubically with the dimension. In practice, however, much smaller values of $T$ can be sufficient. Figure \ref{fig5} illustrates that as $T$ decreases, the performance of Algorithm \ref{alg_ZBSA} surpasses that of Algorithm \ref{alg_ZBSA2}. In particular, for smaller $T$, Algorithm \ref{alg_ZBSA} requires substantially fewer queries to $F$. This advantage stems from equation \eqref{eq:OptJointUpdate}, where the $y$-update in Algorithm \ref{alg_ZBSA2} requires queries to both functions, whereas in Algorithm \ref{alg_ZBSA} the $y$-update only involves queries to $G$, as shown in \eqref{def_yt}. This is an indication that while Algorithm \ref{alg_ZBSA2} is theoretically optimal, it is possible that for some problems where the Hessian inverse can be treated coarsely, we observe a faster convergence of Algorithm \ref{alg_ZBSA}, especially when the quesries to $F$ are more expensive.

\begin{figure}[!htbp]
\begin{center}
\begin{overpic}[trim={0.15cm -.25cm  0 0},clip,height=1.85in]{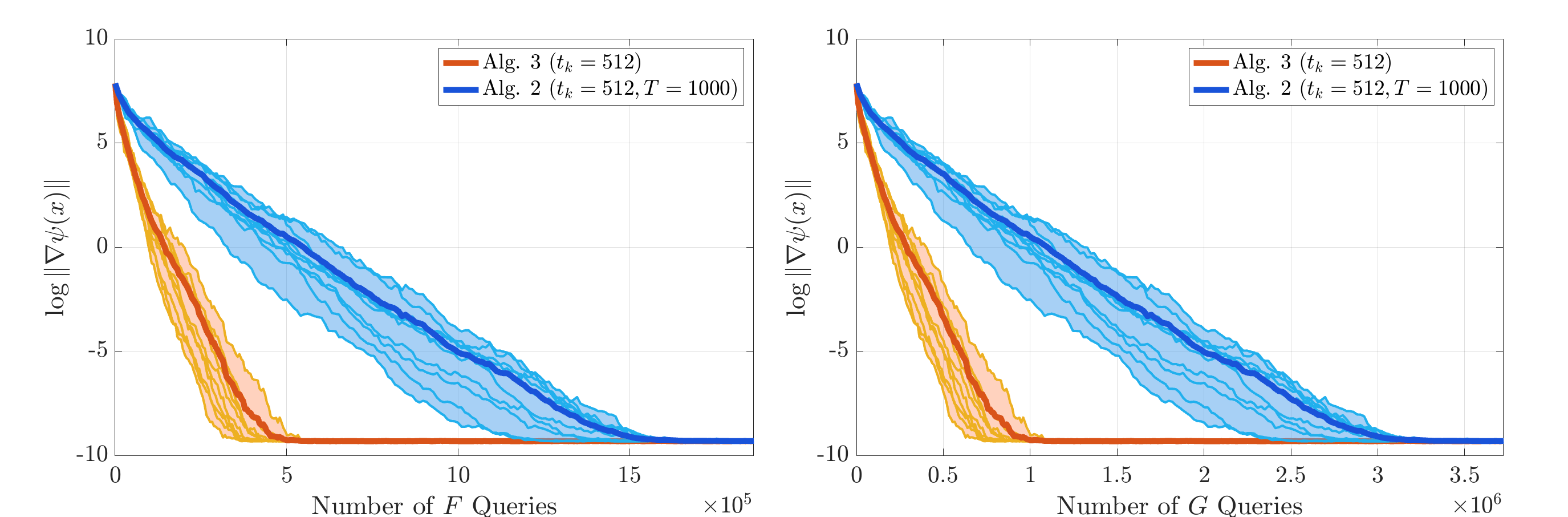}

\end{overpic}\hspace{.0cm}
\begin{overpic}[trim={0 -.25cm  0 0},clip,height=1.85in]{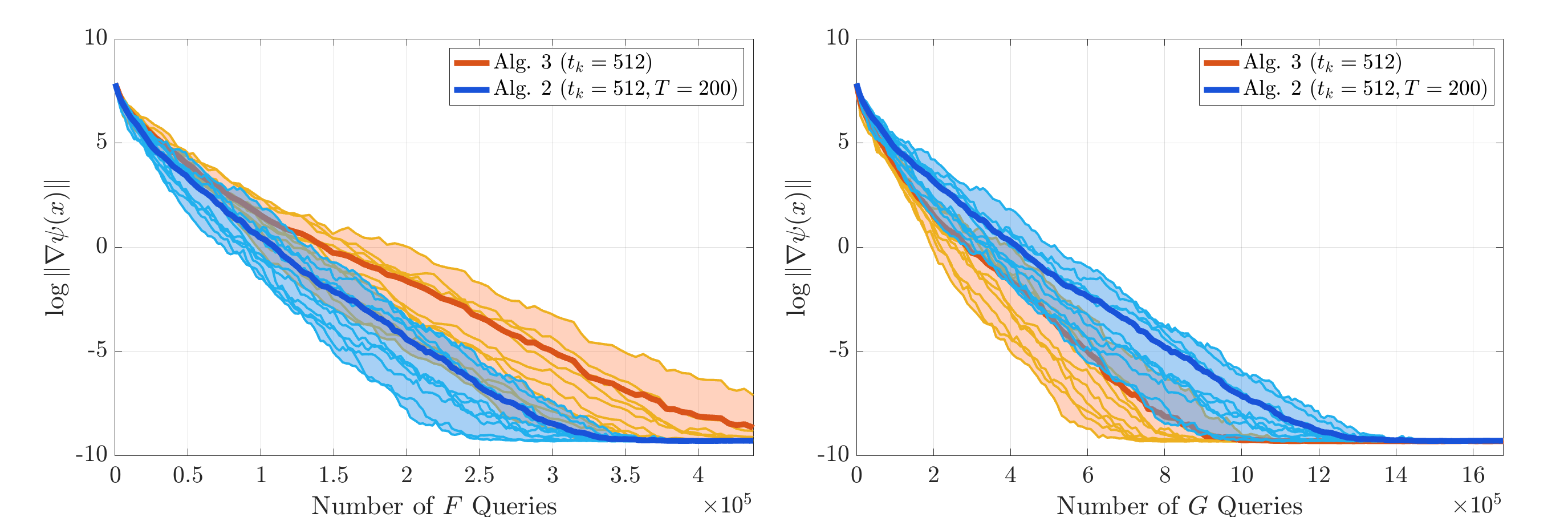}
\end{overpic}

\begin{overpic}[trim={0 -.25cm  0 0},clip,height=1.85in]{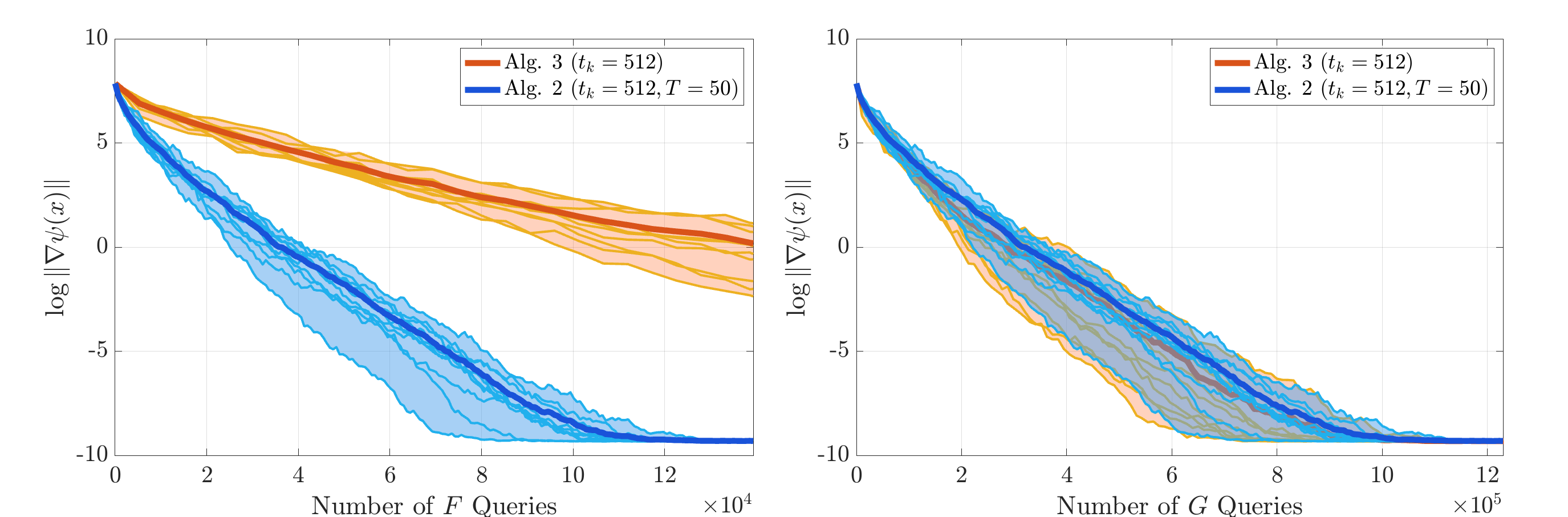}
\end{overpic}\hspace{.0cm}

\begin{overpic}[trim={0 -.25cm  0 0},clip,height=1.85in]{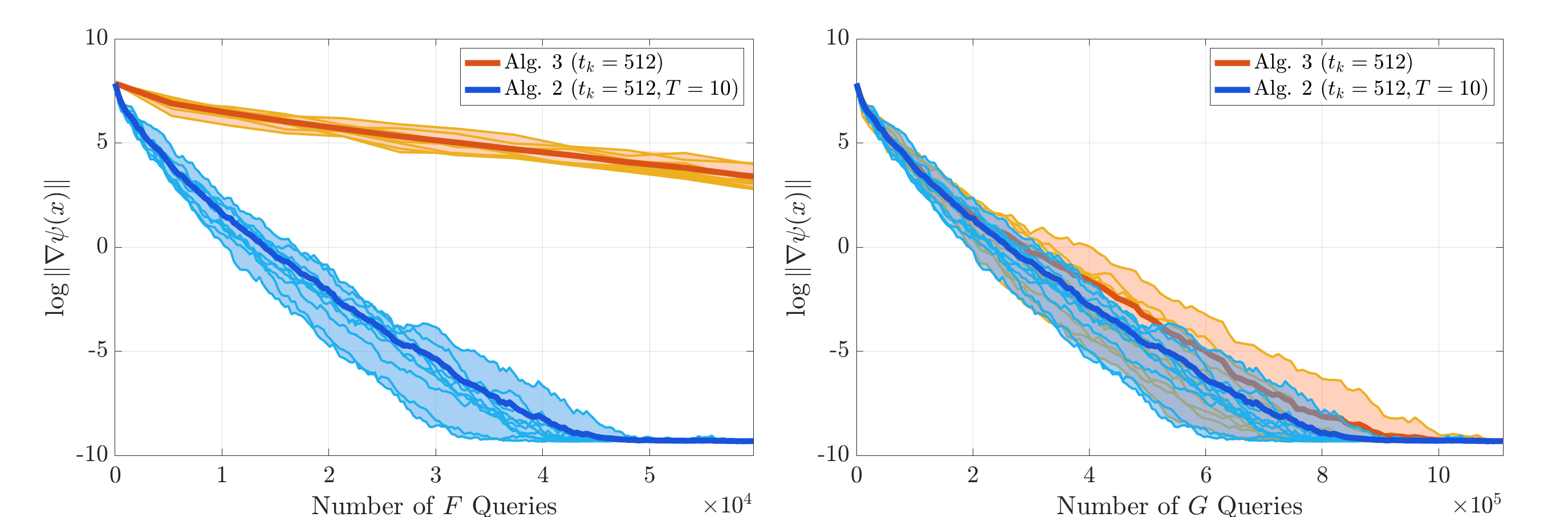}
\end{overpic}\hspace{.0cm}

\end{center}

		\caption{Comparing Algorithm \ref{alg_ZBSA} with Algorithm \ref{alg_ZBSA2} as $T$ decreases}\label{fig5}
\end{figure}

\section{Conclusion}
In this paper, we provide (nearly) optimal zeroth-order stochastic approximation algorithms for solving stochastic bilevel optimization problems. in particular, our first proposed algorithm uses a double-loop framework and mini-batch of samples from queries to both upper and lower level objective functions and can achieve an optimal sample complexity (up to a logarithmic factor) for finding a stationary point of the problem in terms of the dependence on the target accuracy while cubically depends on the problem dimension. Since this dependency comes from approximating second-order derivatives by zeroth-order information, we exploit a frame work of first-order algorithm and show that its sample complexity only linearly depends on the problem dimension implying that it is optimal in terms of depending on both problem dimension and target accuracy.  \vspace{1cm}

\section*{Appendix (Supplement)}
\appendix

\section{General Auxiliary Results}
In the following, we provide some technical tools and results that have been used in the paper proofs.

By definition \eqnok{main_prob_st}, evaluating both $\psi(x)$ and its gradient $\nabla \psi(x)$ requires explicit access to $y^(x)$. In particular, the following results hold, derived directly from applying the chain rule to the optimality condition $\nabla_y g(\bar x, y^*(\bar x)) = 0$.     
\begin{lemma}[\cite{GhadWang18}] 
\label{def_Mxy}Suppose that Assumption~\ref{fg_assumption} holds.
\begin{itemize}
\item [a)] For any $\bar x \in X$, $y^*(\bar x)$ is unique, differentiable, and 
\beq\label{grad_ystar}
\nabla y^*(\bar x) = -  \left[\nabla_{yy}^2 g(x,y^*(\bar x))\right]^{-1}\nabla_{xy}^2 g(x,y^*(\bar x))^\top.
\eeq

\item [b)] For any $\bar x \in X$:
\beq\label{grad_f2}
\nabla \psi(\bar x) = \nabla_x f(\bar x, y^*(\bar x))- \nabla_{xy}^2 g(x,y^*(\bar x)) \left[\nabla_{yy}^2 g(x,y^*(\bar x))\right]^{-1} \nabla_y f(\bar x, y^*(\bar x)).
\eeq
\end{itemize}
\end{lemma}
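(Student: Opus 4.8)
The plan is to obtain both parts from the implicit function theorem applied to the first-order optimality condition of the lower-level problem. For part~(a), fix $\bar x \in X$. Since $g(\bar x,\cdot)$ is strongly convex by Assumption~\ref{fg_assumption}(b), it is coercive and strictly convex, so it has a unique minimizer; this is $y^*(\bar x)$, and it is characterized by the stationarity condition $\nabla_y g(\bar x, y^*(\bar x)) = 0$. I would then apply the implicit function theorem to the map $\Phi(x,y) := \nabla_y g(x,y)$, which is $C^1$ because $g$ is twice differentiable with Lipschitz Hessian: strong convexity of $g(x,\cdot)$ makes $\nabla_{yy}^2 g(\bar x, y^*(\bar x))$ positive definite, hence invertible, so the theorem produces a $C^1$ map $\tilde y$ on a neighborhood of $\bar x$ solving $\Phi(x,\tilde y(x)) = 0$. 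Because $\tilde y(x)$ is a stationary point of the strongly convex function $g(x,\cdot)$, it must coincide with its unique global minimizer $y^*(x)$; hence $y^* = \tilde y$ near $\bar x$, and so $y^*$ is unique and differentiable at $\bar x$.

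For the formula \eqref{grad_ystar}, I would differentiate the identity $\nabla_y g(x, y^*(x)) \equiv 0$ with respect to $x$ via the chain rule, obtaining $\nabla_{xy}^2 g(x, y^*(x))^\top + \nabla_{yy}^2 g(x, y^*(x))\, Dy^*(x) = 0$, where $Dy^*$ is the Jacobian of $y^*$, and then solve for $Dy^*(x)$ using the invertibility of $\nabla_{yy}^2 g$ (which recovers \eqref{grad_ystar} up to the transpose convention adopted in the statement). For part~(b), write $\psi(x) = f(x, y^*(x))$; since $f$ is $C^1$ by Assumption~\ref{fg_assumption}(a) and $y^*$ is differentiable by part~(a), the chain rule gives $\nabla \psi(\bar x) = \nabla_x f(\bar x, y^*(\bar x)) + Dy^*(\bar x)^\top \nabla_y f(\bar x, y^*(\bar x))$. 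Substituting \eqref{grad_ystar} and using the symmetry of $\nabla_{yy}^2 g$, so that the transpose of its inverse equals its inverse, yields \eqref{grad_f2}.

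This is essentially a textbook application of the implicit function theorem, so I do not expect a deep obstacle; the points that need care are (i) matching the local branch $\tilde y$ furnished by the implicit function theorem with the globally defined minimizer $y^*$, which is handled by the uniqueness coming from strong convexity, and (ii) the behaviour at points $\bar x$ on the boundary of $X$ — here the statement implicitly treats $f$ and $g$ as smooth on an open neighborhood of $X \times \mathbb{R}^m$, so I would make that assumption explicit (or simply restrict to the case $X = \mathbb{R}^n$) rather than develop a theory of one-sided derivatives. Beyond that, the remaining work is just bookkeeping of the matrix and transpose conventions.
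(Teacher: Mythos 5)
Your argument is correct and matches the route the paper itself indicates: the paper gives no proof of Lemma~\ref{def_Mxy} (it is imported from \cite{GhadWang18}), stating only that the identities follow by applying the chain rule to the optimality condition $\nabla_y g(\bar x, y^*(\bar x)) = 0$, which is precisely your implicit-function-theorem derivation. Your added care about matching the local IFT branch with the global minimizer and about smoothness on a neighborhood of $X$ is sound and consistent with how the cited reference treats the result.
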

Clearly, $y^*(x)$ is inaccessible unless the inner problem admits a closed-form solution, creating computational difficulties for employing gradient-based algorithms in the BLP setting. To address this, we introduce an estimator that approximates $\nabla \psi(x)$, motivated by the formulation in \eqref{grad_f2}. Specifically, for any $x \in X$ and $y \in \mathbb{R}^m$, we define
\beq
\bar \nabla (f,g)(x,y) := \nabla_x f(x,y)- \nabla_{xy}^2 g(x,y)\left[\nabla_{yy}^2 g(x,y)\right]^{-1}\nabla_y f(x,y).\label{grad_f1}
\eeq

\begin{lemma}[\cite{GhadWang18}]\label{grad_f_error}
Under Assumption~\ref{fg_assumption}, for any $x \in X$ and $y \in \bbr^m$:
\begin{itemize}
\item [a)] The error of the gradient estimator $\bar \nabla f(x, y)$ depends on the quality of the approximate solution to the inner problem, i.e.,
\beq \label{def_grad_error}
\|\bar \nabla (f,g)(x, y) - \nabla \psi(x)\| \le C_1 \|y^*(x)-y\|,
\eeq
where $C_1>0$ is a constant depending on the problem parameters such as Lipschitz constants of functions $f$, $g$, and their gradients.

\item [b)] The optimal solution of the inner problem, $y^*(x)$, is Lipschitz continuous in $x$.

\item [c)] The gradient of $ \psi(x)$ is Lipschitz continuous.
\end{itemize}

\end{lemma}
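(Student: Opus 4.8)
The plan is to reduce all three parts to a short list of elementary estimates supplied by Assumption~\ref{fg_assumption}, and then combine them by an ``add and subtract'' telescoping argument. The estimates I would record up front are: $\nabla_x f$ and $\nabla_y f$ are $L_{1,f}$-Lipschitz; Lipschitzness of $f$ gives the global bound $\|\nabla_y f(x,y)\| \le L_{0,f}$; Lipschitzness of $\nabla g$ gives $\|\nabla^2_{xy} g(x,y)\| \le L_{1,g}$; $\nabla^2 g$ is $L_{2,g}$-Lipschitz; and $\lambda_g$-strong convexity of $g(x,\cdot)$ gives $\nabla^2_{yy} g(x,y) \succeq \lambda_g I$, hence $\|[\nabla^2_{yy} g(x,y)]^{-1}\| \le 1/\lambda_g$. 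The one ingredient needing a line of work is that $(x,y)\mapsto[\nabla^2_{yy} g(x,y)]^{-1}$ is Lipschitz with constant $L_{2,g}/\lambda_g^2$; I would obtain this from the resolvent identity $A^{-1}-B^{-1}=A^{-1}(B-A)B^{-1}$ together with the two preceding bounds.

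For part (a), I would first invoke Lemma~\ref{def_Mxy}(b) to write $\nabla\psi(x)=\bar\nabla(f,g)(x,y^*(x))$, so that it suffices to show $y\mapsto\bar\nabla(f,g)(x,y)$ is $C_1$-Lipschitz, uniformly in $x$. Splitting $\bar\nabla(f,g)(x,y)-\bar\nabla(f,g)(x,y^*(x))$ into the $\nabla_x f$ difference (bounded by $L_{1,f}\|y-y^*(x)\|$) and the difference of the triple products $A(y)B(y)c(y)$ with $A=\nabla^2_{xy}g$, $B=[\nabla^2_{yy}g]^{-1}$, $c=\nabla_y f$, the telescoping identity $A_1B_1c_1-A_2B_2c_2=(A_1-A_2)B_1c_1+A_2(B_1-B_2)c_1+A_2B_2(c_1-c_2)$ reduces everything to the recorded bounds (the resolvent Lipschitz bound being used on the middle term); the resulting constant is of order $C_1 = L_{1,f}+L_{2,g}L_{0,f}/\lambda_g+L_{1,g}L_{2,g}L_{0,f}/\lambda_g^2+L_{1,g}L_{1,f}/\lambda_g$.

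For part (b), I would use the first-order optimality $\nabla_y g(x,y^*(x))=0$: writing $0=\nabla_y g(x_1,y^*(x_1))-\nabla_y g(x_2,y^*(x_2))$, inserting $\pm\nabla_y g(x_1,y^*(x_2))$, and pairing with $y^*(x_1)-y^*(x_2)$, strong monotonicity of $\nabla_y g(x_1,\cdot)$ gives the lower bound $\lambda_g\|y^*(x_1)-y^*(x_2)\|^2$, while the cross term is at most $L_{1,g}\|x_1-x_2\|\,\|y^*(x_1)-y^*(x_2)\|$; rearranging yields $\|y^*(x_1)-y^*(x_2)\|\le (L_{1,g}/\lambda_g)\|x_1-x_2\|$ (equivalently one reads $\|\nabla y^*(x)\|\le L_{1,g}/\lambda_g$ off the formula in Lemma~\ref{def_Mxy}(a)). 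For part (c), I would decompose $\nabla\psi(x_1)-\nabla\psi(x_2)$ as $[\bar\nabla(f,g)(x_1,y^*(x_1))-\bar\nabla(f,g)(x_2,y^*(x_1))]+[\bar\nabla(f,g)(x_2,y^*(x_1))-\bar\nabla(f,g)(x_2,y^*(x_2))]$; the second bracket is handled by parts (a) and (b), giving $\le C_1(L_{1,g}/\lambda_g)\|x_1-x_2\|$, and the first bracket is handled by the same telescoping argument as in (a) applied to the $x$-arguments at fixed $y=y^*(x_1)$, using that $\nabla_x f(\cdot,y)$, $\nabla^2_{xy}g(\cdot,y)$, $[\nabla^2_{yy}g(\cdot,y)]^{-1}$, and $\nabla_y f(\cdot,y)$ are Lipschitz in $x$ with constants $L_{1,f}$, $L_{2,g}$, $L_{2,g}/\lambda_g^2$, $L_{1,f}$, respectively. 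Summing the two contributions gives a Lipschitz constant $L_{1,\psi}$ of the same shape as $C_1$.

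I expect the only genuine obstacle to be the triple-product Lipschitz estimate through the inverse Hessian in parts (a) and (c): one must juggle the add-and-subtract telescoping, insert the resolvent bound at the right spot, and keep careful track of which boundedness/Lipschitz constant is invoked in each term — in particular the implicit use of $\|\nabla_y f\|\le L_{0,f}$, which is where Lipschitzness of $f$ (not merely of $\nabla f$) is actually needed. None of this is deep, but it is the place that is easy to get sloppy about.
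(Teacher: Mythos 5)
Your proof is correct and is essentially the standard argument: the paper itself offers no proof of this lemma, importing it directly from \cite{GhadWang18}, and your telescoping decomposition of the triple product $\nabla^2_{xy}g\,[\nabla^2_{yy}g]^{-1}\nabla_y f$ together with the resolvent bound and the strong-monotonicity argument for part (b) is precisely how that reference establishes it (its Lemma 2.2). The constants you obtain, including $C_1$ and the use of $\|\nabla_y f\|\le L_{0,f}$ from Lipschitzness of $f$ itself, match the cited source.
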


\begin{proposition}[\cite{AghaGhad25}]\label{zeroth-order bilevel}
Consider the bilevel optimization problem \eqnok{main_prob_st} and its Gaussian smooth approximation \eqref{main_prob_zst}. If $f\in\mathcal{C}^1(X\times \R^m;L_{1,f})$ and $g\in\mathcal{C}^1(X\times \R^m;L_{1,g})$, then
\begin{align*}
   & \|y_{\eta_2,\mu_2}^*(x)-y^*(x)\|^2 \leq \frac{2 L_{1,g}}{\lambda_g }\left(\eta_2^2n + \mu_2^2 m\right),\\ 
    &\|\nabla \psi_{\eta,\mu}(x) - \nabla \psi(x)\| \le L_{1, f} \sqrt{\frac{2 L_{1,g}}{\lambda_g }\left(\eta_2^2n + \mu_2^2 m\right)}\nonumber \\
    & \qquad \qquad \qquad \quad  + \frac{L_{1,f}}{2}\left(\eta_1(n+3)^{\frac{3}{2}} + \frac{\mu_1^2}{\eta_1} m n^{\frac{1}{2}} + \frac{\eta_1^2}{\mu_1} n m^{\frac{1}{2}}+ \mu_1(m+3)^{\frac{3}{2}} \right).
\end{align*}
\end{proposition}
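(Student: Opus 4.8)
The plan is to prove the two estimates in turn, deriving the lower-level solution bias first and then feeding it into the hypergradient bound. For $\|y_{\eta_2,\mu_2}^*(x)-y^*(x)\|^2$ I would use that Gaussian convolution preserves $\lambda_g$-strong convexity, so $g_{\eta_2,\mu_2}(x,\cdot)$ is again $\lambda_g$-strongly convex with a unique minimizer $y_{\eta_2,\mu_2}^*(x)$. Applying the strong-convexity lower bound at each minimizer gives $g(x,y_{\eta_2,\mu_2}^*(x))-g(x,y^*(x))\ge\frac{\lambda_g}{2}\|y_{\eta_2,\mu_2}^*(x)-y^*(x)\|^2$ and $g_{\eta_2,\mu_2}(x,y^*(x))-g_{\eta_2,\mu_2}(x,y_{\eta_2,\mu_2}^*(x))\ge\frac{\lambda_g}{2}\|y_{\eta_2,\mu_2}^*(x)-y^*(x)\|^2$; adding these, the left side collapses to at most $2\sup_y|g_{\eta_2,\mu_2}(x,y)-g(x,y)|$. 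A second-order Taylor expansion of $g$, whose linear term vanishes because $u,v$ are centered, together with the $L_{1,g}$-Lipschitzness of $\nabla g$, yields $|g_{\eta_2,\mu_2}(x,y)-g(x,y)|\le\frac{L_{1,g}}{2}(\eta_2^2\EE\|u\|^2+\mu_2^2\EE\|v\|^2)=\frac{L_{1,g}}{2}(\eta_2^2 n+\mu_2^2 m)$. Combining gives $\|y_{\eta_2,\mu_2}^*(x)-y^*(x)\|^2\le\frac{L_{1,g}}{\lambda_g}(\eta_2^2 n+\mu_2^2 m)$, which implies the stated bound.

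For $\|\nabla\psi_{\eta,\mu}(x)-\nabla\psi(x)\|$ I would insert the intermediate vector $\bar\nabla(f,g)(x,y_{\eta_2,\mu_2}^*(x))$ -- the exact hypergradient map \eqref{grad_f1} evaluated at the smoothed lower solution -- and split by the triangle inequality. The ``outer'' piece equals $\|\bar\nabla(f,g)(x,y_{\eta_2,\mu_2}^*(x))-\bar\nabla(f,g)(x,y^*(x))\|$, which by Lemma~\ref{grad_f_error}(a) is at most $C_1\|y_{\eta_2,\mu_2}^*(x)-y^*(x)\|$; substituting the first part (with $C_1$ the Lipschitz constant there) produces the first summand $L_{1,f}\sqrt{\frac{2L_{1,g}}{\lambda_g}(\eta_2^2 n+\mu_2^2 m)}$. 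The ``inner'' piece $\|\nabla\psi_{\eta,\mu}(x)-\bar\nabla(f,g)(x,y_{\eta_2,\mu_2}^*(x))\|$ differs only in that the formula \eqref{def_smooth_hypergrad} uses the Gaussian-smoothed gradients $\nabla_x f_{\eta_1,\mu_1},\nabla_y f_{\eta_1,\mu_1}$ (and smoothed Hessian factors of $g$); using the uniform bounds $\|\nabla_{xy}^2 g\|\le L_{1,g}$ and $\|[\nabla_{yy}^2 g]^{-1}\|\le\lambda_g^{-1}$ to reduce it to $\|\nabla_x f_{\eta_1,\mu_1}-\nabla_x f\|+\frac{L_{1,g}}{\lambda_g}\|\nabla_y f_{\eta_1,\mu_1}-\nabla_y f\|$, and then invoking the two-block Gaussian-smoothing gradient-approximation inequalities (Nesterov's $\frac{\mu L_1}{2}(d+3)^{3/2}$-type estimates in the two-block form with parameters $\eta_1,\mu_1$; see \cite{nesterov2017random,AghaGhad25}), one recovers the terms $\eta_1(n+3)^{3/2}$, $\frac{\mu_1^2}{\eta_1}mn^{1/2}$, $\frac{\eta_1^2}{\mu_1}nm^{1/2}$, $\mu_1(m+3)^{3/2}$ with prefactor $L_{1,f}/2$.

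The routine parts are the strong-convexity argument and the Taylor/convolution estimates. The main obstacle is the bookkeeping in the second step: one must control the Hessian factors of $g$ in \eqref{def_smooth_hypergrad} without producing a stray $L_{2,g}$ term -- either by comparing $[\nabla_{yy}^2 g_{\eta_2,\mu_2}]^{-1}\nabla_{xy}^2 g_{\eta_2,\mu_2}$ to its un-smoothed analogue through the resolvent identity $A^{-1}-B^{-1}=A^{-1}(B-A)B^{-1}$ and absorbing the remainder into the outer piece via Lemma~\ref{grad_f_error}(a), or by working with the un-smoothed-Hessian variant of the estimator -- and then checking that the accumulated Lipschitz and strong-convexity prefactors collapse to exactly the $L_{1,f}$ and $\sqrt{2L_{1,g}/\lambda_g}$ displayed in the statement. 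This constant-chasing, rather than any conceptual gap, is where the work lies.
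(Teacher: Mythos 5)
The paper does not prove this proposition; it is imported verbatim from \cite{AghaGhad25} as an auxiliary result, so there is no in-paper argument to compare against. Your proof of the first inequality is correct and standard: adding the two strong-convexity inequalities at the two minimizers and bounding the uniform gap $\sup_y|g_{\eta_2,\mu_2}(x,y)-g(x,y)|$ by $\tfrac{L_{1,g}}{2}(\eta_2^2 n+\mu_2^2 m)$ via a Taylor expansion (the linear term vanishing because $u,v$ are centered) in fact yields $\|y^*_{\eta_2,\mu_2}(x)-y^*(x)\|^2\le \tfrac{L_{1,g}}{\lambda_g}(\eta_2^2 n+\mu_2^2 m)$, a factor of two better than stated.

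The second inequality is where there is a genuine gap, and it is not the ``constant-chasing'' you defer at the end. With your decomposition, the outer piece is bounded by $C_1\|y^*_{\eta_2,\mu_2}(x)-y^*(x)\|$, and the constant $C_1$ of Lemma~\ref{grad_f_error}(a) is not $L_{1,f}$: it necessarily carries contributions of the form $L_{1,f}L_{1,g}/\lambda_g$ and $L_{0,f}L_{2,g}/\lambda_g$ coming from the variation of the Jacobian/inverse-Hessian factors with $y$, so the first summand $L_{1,f}\sqrt{2L_{1,g}(\eta_2^2 n+\mu_2^2 m)/\lambda_g}$ cannot be recovered this way. For the inner piece, even granting the resolvent manipulation, the coefficient multiplying $\|\nabla_y f_{\eta_1,\mu_1}-\nabla_y f\|$ is $\|\nabla^2_{xy}g\|\,\|[\nabla^2_{yy}g]^{-1}\|\le L_{1,g}/\lambda_g$, so the $\mu_1(m+3)^{3/2}$ term acquires the prefactor $\tfrac{L_{1,f}}{2}\bigl(1+L_{1,g}/\lambda_g\bigr)$ rather than $\tfrac{L_{1,f}}{2}$; and comparing $\nabla^2_{xy}g_{\eta_2,\mu_2},\nabla^2_{yy}g_{\eta_2,\mu_2}$ with their unsmoothed counterparts produces additive terms of order $L_{2,g}L_{0,f}\lambda_g^{-1}(\eta_2 n^{3/2}+\mu_2 m^{3/2})$ (cf.\ Lemma~\ref{lemma:second_deriv_approx_error}) that do not appear in the statement at all --- and that require $g\in\mathcal C^2$ with Lipschitz Hessian, whereas the proposition assumes only $g\in\mathcal C^1$. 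So the route you describe proves a strictly weaker inequality with larger constants and extra terms; obtaining the bound exactly as displayed requires a different argument, not merely more careful bookkeeping of the steps you outline.
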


\begin{proposition}[\cite{AghaGhad25}]\label{propMomentBounds}
Consider $x\in\mathbb{R}^n$, $y\in \mathbb{R}^m$. For $q: \mathbb{R}^{n+m}\to \mathbb{R}$, where $q\in \mathcal{C}^1(\mathbb{R}^{n+m};L_{1,q})$, consider the stochastic gradient $\tilde \nabla_x q_{\eta,\mu}(x,y)$ in \eqref{stoch:gradx_minibatch} where $Q$ is replaced with $q$ and $N=1$. Then
\begin{align*}
\EE  \|\tilde \nabla_x q_{\eta,\mu}(x,y) \|^2  \leq L_{1,q}^2\left(\eta^2(n+6)^3+\frac{\mu^4}{\eta^2}n(m+4)^2 \right) &+  4(n+2) \|\nabla_x  q(x, y)\|^2 \\ &+  \frac{4\mu^2}{\eta^2}n\|\nabla_y  q(x, y)\|^2.
\end{align*}
\end{proposition}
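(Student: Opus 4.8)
The plan is a direct second-moment computation in the Nesterov--Spokoiny style, adapted to the two-block structure $(x,y)$. Setting $N=1$ and replacing $Q$ by the deterministic $q$ in \eqref{stoch:gradx_minibatch}, the estimator is
\[
\tilde\nabla_x q_{\eta,\mu}(x,y)=\frac{q(x+\eta u,\,y+\mu v)-q(x,y)}{\eta}\,u,\qquad u\sim\mathcal N(0,I_n),\ v\sim\mathcal N(0,I_m)\ \text{independent}.
\]
First I would Taylor-expand $q$ at $(x,y)$ along the direction $(\eta u,\mu v)$ with integral remainder, and use that $\nabla q$ is $L_{1,q}$-Lipschitz (from $q\in\mathcal C^1(\mathbb R^{n+m};L_{1,q})$), to get
\[
q(x+\eta u,y+\mu v)-q(x,y)=\eta\langle\nabla_x q(x,y),u\rangle+\mu\langle\nabla_y q(x,y),v\rangle+R,\qquad |R|\le\tfrac{L_{1,q}}2\big(\eta^2\|u\|^2+\mu^2\|v\|^2\big),
\]
and hence the decomposition
\[
\tilde\nabla_x q_{\eta,\mu}(x,y)=\underbrace{\langle\nabla_x q(x,y),u\rangle\,u}_{a}+\underbrace{\tfrac\mu\eta\langle\nabla_y q(x,y),v\rangle\,u}_{b}+\underbrace{\tfrac1\eta R\,u}_{c}.
\]

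Next I would compute $\EE\|a+b+c\|^2$ term by term: the $a$--$b$ cross term vanishes since $u,v$ are independent and $\EE v=0$, and the remaining cross terms are absorbed by Young's inequality, leaving a sum of $\EE\|a\|^2$, $\EE\|b\|^2$, $\EE\|c\|^2$ with explicit constants. Evaluating these uses only standard Gaussian moment identities for $w\sim\mathcal N(0,I_d)$: $\EE\|w\|^2=d$, $\EE\|w\|^4=d(d+2)$, $\EE\|w\|^6=d(d+2)(d+4)$, $\EE\langle g,w\rangle^2=\|g\|^2$, and $\EE\big[\langle g,w\rangle^2\|w\|^2\big]=(d+2)\|g\|^2$. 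The $a$-term contributes the $(n+2)\|\nabla_x q(x,y)\|^2$ piece; the $b$-term factorizes over $u$ and $v$ into $\tfrac{\mu^2}{\eta^2}\,\EE\langle\nabla_y q,v\rangle^2\cdot\EE\|u\|^2=\tfrac{\mu^2}{\eta^2}\,n\,\|\nabla_y q(x,y)\|^2$; and the $c$-term, using $|R|^2\le\tfrac{L_{1,q}^2}2(\eta^4\|u\|^4+\mu^4\|v\|^4)$, produces $L_{1,q}^2\big(\eta^2\,\EE\|u\|^6+\tfrac{\mu^4}{\eta^2}\EE\|v\|^4\,\EE\|u\|^2\big)\lesssim L_{1,q}^2\big(\eta^2\,n(n+2)(n+4)+\tfrac{\mu^4}{\eta^2}\,n\,m(m+2)\big)$. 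Finally I would loosen the polynomial prefactors to the envelopes in the statement via $n(n+2)(n+4)\le(n+6)^3$ and $m(m+2)\le(m+4)^2$, and collect the numerical constants so that the coefficient on the gradient-norm terms is $4$.

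The only real obstacle is bookkeeping around the remainder: after the $1/\eta$ amplification one squares a second-order remainder, so sixth moments of $u$ and mixed fourth/second moments of $(u,v)$ appear, and one must (i) keep the leading constant multiplying $\eta^2 n^3$ at most one, so that the $(n+6)^3$ envelope is valid for all $n,m\ge1$, and (ii) choose the Young-inequality weights so that the gradient-norm coefficients come out exactly $4(n+2)$ and $4\mu^2 n/\eta^2$ rather than something larger. None of this is conceptually deep. As a shortcut one may instead simply note that this proposition is the special case $N=1$, $Q\equiv q$ (so that $\sigma_{1,Q}=0$) of the first inequality in Proposition~\ref{propNestApprox_stch}(a), whose proof is given in the supplement.
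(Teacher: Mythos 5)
Your main argument is correct, and it is worth noting up front that this paper does not actually prove Proposition~\ref{propMomentBounds}: it is imported verbatim from \cite{AghaGhad25}, so there is no in-paper proof to match against. Your direct Nesterov--Spokoiny-style computation is a legitimate self-contained derivation. Concretely, writing $\tilde\nabla_x q_{\eta,\mu}=a+b+c$ as you do, splitting $\EE\|a+b+c\|^2\le 2\EE\|a+b\|^2+2\EE\|c\|^2$ (the $a$--$b$ cross term vanishes by independence of $u,v$ and $\EE v=0$), and using $\EE[\langle g,u\rangle^2\|u\|^2]=(n+2)\|g\|^2$, $\EE\|u\|^6=n(n+2)(n+4)\le(n+6)^3$, $\EE\|v\|^4=m(m+2)\le(m+4)^2$ together with $|R|^2\le\tfrac{L_{1,q}^2}{2}(\eta^4\|u\|^4+\mu^4\|v\|^4)$ yields coefficients $2(n+2)$ and $2\mu^2 n/\eta^2$ on the gradient terms and exactly $L_{1,q}^2$ on the $\eta^2(n+6)^3$ term --- strictly inside the stated envelopes with the factor $4$. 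So the bookkeeping concerns you raise in point (ii) resolve favorably and nothing needs to be tuned.

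The one genuine flaw is your proposed ``shortcut.'' Deducing Proposition~\ref{propMomentBounds} as the $N=1$, $\sigma_{1,Q}=0$ special case of Proposition~\ref{propNestApprox_stch}(a) is circular in this paper's logical structure: the supplement's proof of Proposition~\ref{propNestApprox_stch}(a) (Section~\ref{SMProof:1}) proceeds by reducing to the single-sample deterministic-$\zeta$ case and then \emph{applying Proposition~\ref{propMomentBounds}}, after which it takes expectation over $\zeta$ to introduce the $\sigma_{1,Q}^2$ terms. The dependency runs in the opposite direction from what your shortcut assumes, so that route cannot stand on its own. Since you present the shortcut only as an alternative to an otherwise complete direct computation, the proof as a whole survives, but the shortcut should be deleted or rephrased as a consistency check rather than a proof.
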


\begin{proposition}[\cite{AghaGhad25}]\label{hessMomentBounds}
Consider a given $\theta\in \R^n$, $x\in\mathbb{R}^n$, $y\in \mathbb{R}^m$, and $q: \mathbb{R}^{n+m}\to \mathbb{R}$, where $q\in \mathcal{C}^2(\mathbb{R}^{n+m};L_{2,q})$.
\begin{itemize}
    \item [a)] The stochastic Hessian block \eqref{stochHessxx_minibatch} where $Q$ is replaced with $q$ and $N=1$, obeys
\begin{align*}\notag 
\EE_{u,v|\theta}\Big\|  \tilde \nabla_{xx}^2 q_{\eta,\mu}(x,y) \theta  \Big\|^2 & \leq  ~2L_{2,q}^2\left( 2\eta^2(n+16)^4 + \frac{\mu^6}{\eta^4}(m+6)^3 (n+3)\right)\|\theta\|^2\\ \notag &~~~~+ \bigg( \frac{15}{2} (n+6)^2\left\|\nabla_{xx}^2 q(x,y)\right\|_F^2 \\&~~~~~~~~+ \frac{3\mu^2}{\eta^2}(3n+13)\|\nabla_{xy}^2q(x,y)\|_F^2 \\& ~~~~~~~~+ \frac{3\mu^4}{2\eta^4}(m+2)(n+3)\|\nabla_{yy}^2q(x,y)\|_F^2\bigg)\|\theta\|^2.
\end{align*}

    \item [b)] The stochastic Hessian block \eqref{stochHessxy_minibatch} where $Q$ is replaced with $q$ and $N=1$, obeys
\begin{align*}\notag 
\EE_{u,v|\theta}\Big\|  \tilde \nabla_{xy}^2 q_{\eta,\mu}(x,y) \theta  \Big\|^2  &\leq  ~8L_{2,q}^2\left[ \frac{\eta^4}{\mu^2}(n+8)^4 + \frac{2\mu^4}{\eta^2} n(m+12)^3\right]\|\theta\|^2\\ \notag &~~~~ + \bigg( \frac{6\eta^2}{\mu^2}(n+4)(n+2)\|\nabla^2_{xx} q\|_F^2 \\&~~~~~~~~+ 36(n+2)\|\nabla^2_{xy}q(x,y)\|_F^2 \\& ~~~~~~~~+ \frac{30\mu^2}{\eta^2}n(m+2)\|\nabla^2_{yy}q(x,y)\|_F^2\bigg)\|\theta\|^2.
\end{align*}
\end{itemize}

\end{proposition}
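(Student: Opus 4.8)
\textbf{Proof proposal for Proposition~\ref{hessMomentBounds}.}

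The plan is to follow the Nesterov--Spokoiny template for randomized second-order estimators: replace the symmetric second difference appearing in \eqref{stochHessxx_minibatch} and \eqref{stochHessxy_minibatch} by the exact quadratic form dictated by $\nabla^2 q$ plus a third-order Taylor remainder, substitute this decomposition into the estimator, split the resulting random vector into the three Hessian-block pieces and the remainder piece, and bound each piece in mean square using Gaussian moment identities (all expectations being over independent $u\sim\mathcal{N}(0,I_n)$, $v\sim\mathcal{N}(0,I_m)$, conditioned on $\theta$). The first step is the Taylor estimate: writing $\phi(s):=q(x+s\eta u,\,y+s\mu v)$ and applying Taylor's theorem with integral remainder at $s=\pm1$,
\[
D := q(x+\eta u,y+\mu v)+q(x-\eta u,y-\mu v)-2q(x,y) = \phi''(0) + E,
\]
with $\phi''(0) = \eta^2 u^\top\nabla^2_{xx}q\,u + 2\eta\mu\,u^\top\nabla^2_{xy}q\,v + \mu^2 v^\top\nabla^2_{yy}q\,v$ and, by the $L_{2,q}$-Lipschitz continuity of the Hessian, $\abs{E} \le \tfrac{L_{2,q}}{3}\,(\eta^2\|u\|^2+\mu^2\|v\|^2)^{3/2}$.

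For part (a), $\tilde\nabla^2_{xx}q_{\eta,\mu}(x,y)\,\theta = (uu^\top - I)\theta\cdot\frac{D}{2\eta^2}$; dividing the decomposition of $D$ by $2\eta^2$ and using $\|a_1+a_2+a_3+a_4\|^2\le 4\sum_i\|a_i\|^2$ reduces the bound to four expectations: (i) $\EE[\|(uu^\top-I)\theta\|^2(u^\top\nabla^2_{xx}q\,u)^2]$; (ii) $\tfrac{\mu^2}{\eta^2}\EE[\|(uu^\top-I)\theta\|^2(u^\top\nabla^2_{xy}q\,v)^2]$; (iii) $\tfrac{\mu^4}{\eta^4}\EE[\|(uu^\top-I)\theta\|^2(v^\top\nabla^2_{yy}q\,v)^2]$; (iv) $\tfrac{1}{\eta^4}\EE[\|(uu^\top-I)\theta\|^2 E^2]$. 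The identities $\EE\|(uu^\top-I)\theta\|^2 = (n+1)\|\theta\|^2$ and $\EE[(uu^\top-I)\theta\theta^\top(uu^\top-I)] = \theta\theta^\top + \|\theta\|^2 I$ handle the purely-$u$ bookkeeping. In (iii) the factor $(v^\top\nabla^2_{yy}q\,v)^2$ is independent of $u$, so the expectation factors as $(n+1)\|\theta\|^2$ times $\EE(v^\top\nabla^2_{yy}q\,v)^2 = 2\|\nabla^2_{yy}q\|_F^2 + (\tr\nabla^2_{yy}q)^2 \le (m+2)\|\nabla^2_{yy}q\|_F^2$, producing the $\tfrac{\mu^4}{\eta^4}(m+2)(n+3)$-type term. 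In (ii) I would first integrate over $v$ to get $\|(\nabla^2_{xy}q)^\top u\|^2$, then evaluate $\EE[\|(uu^\top-I)\theta\|^2\|(\nabla^2_{xy}q)^\top u\|^2]$ by a Wick expansion (or a Cauchy--Schwarz split), which is $O(n)\|\nabla^2_{xy}q\|_F^2\|\theta\|^2$ (the $(3n+13)$ factor). Term (i) is the genuinely eighth-order Gaussian moment; the cheap route is Cauchy--Schwarz, $\EE[\cdots] \le (\EE\|(uu^\top-I)\theta\|^4)^{1/2}(\EE(u^\top\nabla^2_{xx}q\,u)^4)^{1/2}$, combined with $\EE\|(uu^\top-I)\theta\|^4 = O(n^2)\|\theta\|^4$ and $\EE(u^\top A u)^4 = O((n+c)^2)\|A\|_F^4$, yielding the $(n+6)^2\|\nabla^2_{xx}q\|_F^2$ term. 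For the remainder (iv), bound $E^2\le\tfrac{L_{2,q}^2}{9}(\eta^2\|u\|^2+\mu^2\|v\|^2)^3$, expand the cube, use independence of $u$ and $v$, and invoke $\EE\|u\|^{2k} = n(n+2)\cdots(n+2k-2)$ and $\EE[\|u\|^{2k}(u^\top\theta)^2] = \tfrac{\|\theta\|^2}{n}\EE\|u\|^{2k+2}$; the dominant $\eta^6\|u\|^6$ and $\mu^6\|v\|^6$ monomials, after dividing by $\eta^4$, give the $2L_{2,q}^2\big(2\eta^2(n+16)^4 + \tfrac{\mu^6}{\eta^4}(m+6)^3(n+3)\big)\|\theta\|^2$ bound.

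Part (b) is entirely analogous: $\tilde\nabla^2_{xy}q_{\eta,\mu}(x,y)\,\theta = u\,(v^\top\theta)\cdot\frac{D}{2\eta\mu}$, so $\|\tilde\nabla^2_{xy}q_{\eta,\mu}(x,y)\,\theta\|^2 = \|u\|^2(v^\top\theta)^2\cdot\frac{D^2}{4\eta^2\mu^2}$; dividing $D$ by $2\eta\mu$, splitting into the same four pieces, and using independence of $u,v$ together with $\EE(v^\top\theta)^2 = \|\theta\|^2$, $\EE[(v^\top\theta)^2 vv^\top] = 2\theta\theta^\top + \|\theta\|^2 I$, $\EE[\|u\|^2 u^\top A u] = (n+2)\tr A$, and $\EE(u^\top A u)^2 = 2\|A\|_F^2 + (\tr A)^2$, each term evaluates in closed form (with the same cube expansion for the remainder) to the stated $\tfrac{\eta^4}{\mu^2}(n+8)^4$, $\tfrac{\mu^4}{\eta^2}n(m+12)^3$, $(n+2)$, and $n(m+2)$ factors. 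The only real obstacle is the high-order Gaussian moment bookkeeping --- the eighth-order moment $\EE[\|(uu^\top-I)\theta\|^2(u^\top A u)^2]$ in (i) and the $\|u\|^6$-type moments in the remainder --- and pinning down the precise constants $(n+6)^2$, $(n+16)^4$, $(m+6)^3(n+3)$, etc.; decoupling products by Cauchy--Schwarz before invoking the one-variable formulas $\EE\|u\|^{2k}$ and $\EE(u^\top A u)^{2k}$ avoids the worst Wick expansions at the cost of only slightly larger numerical constants, and everything else is routine.
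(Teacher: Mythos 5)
The paper does not prove Proposition~\ref{hessMomentBounds} at all: it is imported verbatim from \cite{AghaGhad25}, so there is no in-paper proof to compare against. Your reconstruction --- symmetric second difference equals $\phi''(0)$ plus a remainder of size $\tfrac{L_{2,q}}{3}(\eta^2\|u\|^2+\mu^2\|v\|^2)^{3/2}$, a four-way split, and Gaussian moment identities for each piece --- is exactly the Nesterov--Spokoiny-style argument that the cited source uses and that this paper itself deploys for the analogous results it does prove (Proposition~\ref{propNestApprox_stch} and Lemma~\ref{lemma:4thMoment}, which invoke the same moment formulas \eqref{cookbook:result:8thorder}--\eqref{cookbook:result:4thorder} from \cite{magnus1978moments} and \cite{nesterov2017random}); your intermediate identities ($\EE\|(uu^\top-I)\theta\|^2=(n+1)\|\theta\|^2$, $\EE[(uu^\top-I)\theta\theta^\top(uu^\top-I)]=\theta\theta^\top+\|\theta\|^2 I$, $\EE[\|u\|^{2k}(u^\top\theta)^2]=\tfrac{\|\theta\|^2}{n}\EE\|u\|^{2k+2}$) are all correct. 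The one caveat is that by decoupling via Cauchy--Schwarz rather than carrying out the full Wick expansions you obtain bounds of the right form but cannot certify the specific numerical constants $(n+16)^4$, $\tfrac{15}{2}(n+6)^2$, $(3n+13)$, etc.\ appearing in the statement, so as written your argument proves the proposition only up to absolute constants rather than verbatim.
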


    \begin{lemma}\label{lemma:4thMoment}
    Consider $x\in\mathbb{R}^n$, $y\in \mathbb{R}^m$. For $q: \mathbb{R}^{n+m}\to \mathbb{R}$, where $q\in \mathcal{C}^1(\mathbb{R}^{n+m};L_{1,q})$, consider the stochastic gradient $\tilde \nabla_x q_{\eta,\mu}(x,y)$ defined as
    \begin{align*}
     \tilde \nabla_x q_{\eta,\mu}(x,y)  =  \frac{q(x+\eta u, y+\mu v) - q(x, y)}{\eta} \ u,
\end{align*}
    where $u$ and $v$ are independent standard normal vectors. Then
\begin{align*}
\EE \|\tilde \nabla_x q_{\eta,\mu}(x,y) \|^4 \! \leq&~ 4L_{1,q}^4\left(\eta^4(n+12)^6+\frac{\mu^8}{\eta^4}(n+4)^2(m+8)^4 \right)\\ &+ 192\left(n^2 + 10n + 24\right)\|\nabla_x  q(x, y)\|^4 \\& + 192(n+4)^2\frac{\mu^4}{\eta^4}\|\nabla_y  q(x, y)\|^4.
\end{align*}
Similarly, for
\begin{align*}
     \tilde \nabla_y q_{\eta,\mu}(x,y) & =  \frac{q(x+\eta u, y+\mu v) - q(x, y)}{\mu} \ v,
\end{align*}
we have
\begin{align*}
\EE  \|\tilde \nabla_y q_{\eta,\mu}(x,y) \|^4 \! \leq&~ 4L_{1,q}^4\left(\mu^4(m+12)^6+\frac{\eta^8}{\mu^4}(m+4)^2(n+8)^4 \right)\\ &+192(m+4)^2\frac{\eta^4}{\mu^4}\|\nabla_x  q(x, y)\|^4\\ &+ 192\left(m^2 + 10m + 24\right)\|\nabla_y  q(x, y)\|^4.
\end{align*}
\end{lemma}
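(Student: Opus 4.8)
The plan is to push the proof of the second-moment estimate in Proposition~\ref{propMomentBounds} up to the fourth power; the only genuinely new ingredient is the evaluation of a few higher-order Gaussian moments. First I would use that $q\in\mathcal{C}^1(\mathbb{R}^{n+m};L_{1,q})$ (i.e.\ $\nabla q$ is $L_{1,q}$-Lipschitz) to expand the numerator of $\tilde\nabla_x q_{\eta,\mu}(x,y)$ around $(x,y)$:
\[
q(x+\eta u,\,y+\mu v)-q(x,y)=\eta\langle\nabla_x q(x,y),u\rangle+\mu\langle\nabla_y q(x,y),v\rangle+R(u,v),
\]
with the quadratic remainder bound $|R(u,v)|\le\tfrac{L_{1,q}}{2}\big(\eta^2\|u\|^2+\mu^2\|v\|^2\big)$. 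Dividing by $\eta$ and multiplying by $u$ yields the three-term split
\[
\tilde\nabla_x q_{\eta,\mu}(x,y)=\langle\nabla_x q(x,y),u\rangle u+\frac{\mu}{\eta}\langle\nabla_y q(x,y),v\rangle u+\frac{R(u,v)}{\eta}u,
\]
and I would then apply $\|X+Y\|^4\le 8\|X\|^4+8\|Y\|^4$ twice---peeling off the remainder term first and then separating the two linear terms---so that it suffices to bound the fourth moment of each of the three pieces. The nested factors $64,64,8$ produced this way are precisely what give the coefficients $192,192,4$ in the claimed inequality.

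The second step is the Gaussian-moment bookkeeping. The only computation requiring care is $\EE\big[\langle a,u\rangle^4\|u\|^4\big]$ with $a=\nabla_x q(x,y)$, because here the same vector $u$ appears inside the inner product and inside the norm; by rotational invariance I would take $a=\|a\|e_1$, write $\|u\|^2=u_1^2+W$ with $W\sim\chi^2_{n-1}$ independent of $u_1$, expand, and use $\EE u_1^8=105$, $\EE u_1^6=15$, $\EE u_1^4=3$, $\EE W=n-1$, $\EE W^2=n^2-1$ to obtain $3\|a\|^4(n^2+10n+24)$. For the second piece $u$ and $v$ are independent, so it factors as $\tfrac{\mu^4}{\eta^4}\,\EE[\langle\nabla_y q,v\rangle^4]\,\EE\|u\|^4=\tfrac{\mu^4}{\eta^4}\cdot 3\|\nabla_y q\|^4\cdot n(n+2)$, which I loosen to $3(n+4)^2\tfrac{\mu^4}{\eta^4}\|\nabla_y q\|^4$. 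For the remainder piece I would use $R^4\le\tfrac{L_{1,q}^4}{16}\big(\eta^2\|u\|^2+\mu^2\|v\|^2\big)^4\le\tfrac{L_{1,q}^4}{2}\big(\eta^8\|u\|^8+\mu^8\|v\|^8\big)$ and then the $\chi^2$-moment identities $\EE\|u\|^{12}=\prod_{i=0}^{5}(n+2i)$, $\EE\|v\|^{8}=\prod_{j=0}^{3}(m+2j)$, $\EE\|u\|^4=n(n+2)$, loosening them to $(n+12)^6$, $(m+8)^4$, $(n+4)^2$ respectively so as to match the stated form. Combining the three bounds gives the first inequality; the second ($\tilde\nabla_y$) inequality is obtained identically by interchanging the roles of $(x,u,n)$ and $(y,v,m)$, so I would invoke it by symmetry.

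I expect the main obstacle to be entirely a matter of care rather than of depth. The single subtle point is the mixed fourth moment $\EE[\langle a,u\rangle^4\|u\|^4]$---the one place where the expectation cannot be factored into a product---and everything else is a chain of deliberately loose inequalities ($(s+t)^4\le 8(s^4+t^4)$, $n(n+2)\le(n+4)^2$, $\prod_{i=0}^{5}(n+2i)\le(n+12)^6$, and so on) whose only purpose is to land on the clean constants $4L_{1,q}^4$, $192(n^2+10n+24)$, and $192(n+4)^2\tfrac{\mu^4}{\eta^4}$ in the statement. No idea beyond the second-moment argument of Proposition~\ref{propMomentBounds} is needed.
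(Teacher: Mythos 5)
Your proposal is correct and follows essentially the same route as the paper's proof: the same three-term decomposition of the difference quotient, the same nested applications of $(a+b)^4\le 8a^4+8b^4$ yielding the coefficients $4L_{1,q}^4$ and $192$, and the same Gaussian-moment bounds for $\EE\|u\|^{12}$, $\EE[\|v\|^8\|u\|^4]$, and the independent cross term. The only (immaterial) difference is that for the mixed moment $\EE[\langle a,u\rangle^4\|u\|^4]$ you compute $3(n^2+10n+24)\|a\|^4$ directly via rotational invariance and $\chi^2$ moments, whereas the paper obtains the identical value by specializing the eighth-moment identity for quadratic forms from \cite{magnus1978moments}.
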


\begin{proof}
Notice that
\begin{align*}
    \mathbb{E}\|\tilde \nabla_x q_{\eta,\mu}(x,y) \|^4 &= \frac{1}{\eta^4}\mathbb{E}\left[\left( q(x+\eta u, y+\mu v) - q(x, y)\right)^4\|u\|^4 \right].
\end{align*}
The function-dependent term can be bounded as
\begin{align*}
    ( q(x\!&+\!\eta u, y\!+\!\mu v) - q(x, y))^4 \\&= \left( q(x\!+\!\eta u, y\!+\!\mu v) - q(x, y) - (\eta u,\mu v)^\top \nabla  q(x, y) + (\eta u,\mu v)^\top \nabla  q(x, y) \right)^4\\ &\leq 8\left( q(x\!+\!\eta u, y\!+\!\mu v) - q(x, y) - (\eta u,\mu v)^\top \nabla  q(x, y)\right)^4 + 8\left((\eta u,\mu v)^\top \nabla  q(x, y)\right)^4\\&\leq 
    \frac{L_{1,q}^4}{2}\left(\eta^2\|u\|^2+\mu^2\|v\|^2 \right)^4+ 8\left( \eta u^\top \nabla_x  q(x, y)+ \mu v^\top \nabla_y  q(x, y) \right)^4\\ 
    &\leq 4 L_{1,q}^2\left(\eta^8\|u\|^8+\mu^8\|v\|^8 \right)+ 64\left(\eta^4 \left(u^\top \nabla_x  q(x, y)\right)^4 + \mu^4\left( v^\top \nabla_y  q(x, y) \right)^4\right),
\end{align*}
   which implies
\begin{align}\notag 
      \|\tilde \nabla_x q_{\eta,\mu}(x,y) \|^4\leq& 4L_{1,q}^2\left[\eta^4\|u\|^{12}+\frac{\mu^8}{\eta^4}\|v\|^8\|u\|^4 \right]\\&+ 64\left( \left(u^\top \nabla_x  q(x, y)\right)^4 + \frac{\mu^4}{\eta^4}\left( v^\top \nabla_y  q(x, y) \right)^4\right)\left(u^\top u\right)^2.\label{eq:expansionGradq}
\end{align}
Using Lemma 1 from \cite{nesterov2017random}, the expectation of the first term on the right side of \eqref{eq:expansionGradq} can be bounded as
\begin{align}
       L_{1,q}^4\EE\left[\eta^4\|u\|^{12}+\frac{\mu^8}{\eta^4}\|v\|^8\|u\|^4 \right] \leq L_{1,q}^4\left(\eta^4(n+12)^6+\frac{\mu^8}{\eta^4}(n+4)^2(m+8)^4 \right). \label{eq:expansionGradq2}
\end{align}
The second term expectation in \eqref{eq:expansionGradq} can be written as
\begin{align}\notag 
   \EE\bigg[ &\Big(u^\top \nabla_x  q(x, y)\Big)^4 \left(u^\top u\right)^2 + \frac{\mu^4}{\eta^4}\Big( v^\top \nabla_y  q(x, y) \Big)^4 \left(u^\top u\right)^2 \bigg] \\\notag &= \EE\left[\left(u^\top \nabla_x  q(x, y) \nabla_x  q(x, y)^\top u u^\top u\right)^2\right] \\\notag &~~~~~~+ \frac{\mu^4}{\eta^4}\EE\left[\left(v^\top \nabla_y  q(x, y) \nabla_y  q(x, y)^\top v\right)^2\right]\EE\left[ \|u\|^4\right]\notag \\& = 3(n^2 + 10n + 24)\|\nabla_x  q(x, y)\|^4 + 3\frac{\mu^4}{\eta^4}\|\nabla_y  q(x, y)\|^4(n+4)^2,\label{eq:expansionGradq3}
\end{align}
where in the second equality we used the 8-th moment result of \cite{magnus1978moments} (see Lemma 6.2 therein) that for symmetric matrices $A$ and $B$:
\begin{align}\label{cookbook:result:8thorder}\notag 
\EE&\left[ \left(u^\top A u u^\top B u\right)^2 \right]= \\& ~\notag \left(\tr(A)\tr(B)\right)^2 + 16\left[ \tr(A)\tr\left(AB^2\right) + \tr(B)\tr\left(A^2B\right)\right]\\& 
\notag  + 4\left[ \tr\left(A^2\right)\tr\left(B^2\right) + 2\left(\tr\left(AB\right)\right)^2\right]\\ 
&\notag + 2\left[ \left(\tr(A)\right)^2\tr\left(B^2\right) + 4\tr(A)\tr(B)\tr(AB) + \left(\tr(B)\right)^2\tr\left(A^2\right) \right]\\&+16\left[ \left(\tr(AB)\right)^2 + 2\tr\left( A^2 B^2\right)\right],
\end{align}
and the 4-th moment result:
\begin{equation}\label{cookbook:result:4thorder}
\EE u^\top A u u^\top B u = 2\tr(AB) + \tr(A)\tr(B).
\end{equation}
Using \eqref{eq:expansionGradq3} and \eqref{eq:expansionGradq2} in \eqref{eq:expansionGradq} yields 
\begin{align*}
\EE \|\tilde \nabla_x q_{\eta,\mu}(x,y) \|^4 \! \leq&~ 4L_{1,q}^4\left(\eta^4(n+12)^6+\frac{\mu^8}{\eta^4}(n+4)^2(m+8)^4 \right)\\ &+ 192\left(n^2 + 10n + 24\right)\|\nabla_x  q(x, y)\|^4 \\& + 192(n+4)^2\frac{\mu^4}{\eta^4}\|\nabla_y  q(x, y)\|^4.
\end{align*}
\end{proof}

\section{Section \ref{sec:intro} Auxiliary Results}
\subsection{Proof of Proposition \ref{propNestApprox_stch}}\label{SMProof:1}
    
	\paragraph{Part (a)}\begin{proof}
		Noting \eqref{stoch:gradx_minibatch}, $\EE \left\|\tilde \nabla_x Q_{\eta,\mu}\left(x,y,\{\zeta_i\}_{i=1}^N\right) - \nabla_x q_{\eta,\mu}(x,y)\right\|^2$ can be written as 
		\begin{equation}\label{eq:nablaxtildeVar}
			  \EE \left\|\tilde \nabla_x Q_{\eta,\mu}\left(x,y,\{\zeta_i\}_{i=1}^N\right)\right\|^2 - \left\|\nabla_x q_{\eta,\mu}(x,y)\right\|^2\leq \EE \left\|\tilde \nabla_x Q_{\eta,\mu}\left(x,y,\{\zeta_i\}_{i=1}^N\right)\right\|^2. 
		\end{equation}
		It all amounts to bound the right-hand side of \eqref{eq:nablaxtildeVar}. Because of the independence among the summand terms in $\tilde \nabla_x Q_{\eta,\mu}\left(x,y,\{\zeta_i\}_{i=1}^N\right)$, 
		\begin{align*}
			\EE \left\|\tilde \nabla_x Q_{\eta,\mu}\left(x,y,\{\zeta_i\}_{i=1}^N\right)\right\|^2= \frac{1}{N}\EE \left\|\tilde \nabla_x Q_{\eta,\mu}\left(x,y,\zeta\right)\right\|^2,
		\end{align*}
		where 
		\begin{align}\notag 
			\tilde \nabla_x Q_{\eta,\mu}(x,y,\zeta)  =  \frac{Q(x+\eta u, y+\mu v,\zeta) - Q(x, y,\zeta)}{\eta} \ u.
		\end{align}
		Applying Proposition \ref{propMomentBounds} we get
		\begin{align*}
			\frac{1}{N}\EE_{u,v} \left\|\tilde \nabla_x Q_{\eta,\mu}\left(x,y,\zeta\right)\right\|^2&\leq \frac{L_{1,Q}^2}{N}\left(\eta^2(n+6)^3+\frac{\mu^4}{\eta^2}n(m+4)^2 \right)\\ &~~~+  \frac{4(n+2)}{N} \|\nabla_x  Q(x, y,\zeta)\|^2\\& ~~~+  \frac{4\mu^2}{\eta^2N}n\|\nabla_y  Q(x, y,\zeta)\|^2.
		\end{align*}
		Now taking an expectation of both sides with respect to $\zeta$ gives
		\begin{align*}
			\frac{1}{N}\EE \left\|\tilde \nabla_x Q_{\eta,\mu}\left(x,y,\zeta\right)\right\|^2& \leq \frac{L_{1,Q}^2}{N}\left(\eta^2(n+6)^3+\frac{\mu^4}{\eta^2}n(m+4)^2 \right) \\&~~~+  \frac{4(n+2)}{N}\left(\sigma_{1,Q}^2+ \|\nabla_x  q(x, y)\|^2\right) \\&~~~+  \frac{4\mu^2}{\eta^2N}n\left(\sigma_{1,Q}^2+\|\nabla_y  q(x, y)\|^2\right).
		\end{align*}
	\end{proof}
	\paragraph{Part (b)}\begin{proof}
		Using a similar independence argument as part (a) we get
		\begin{align}\notag 
			\EE_{\{u_i,v_i,\zeta_i\}_{i=1}^N\mid \theta } &\left\|\tilde \nabla_{xy}^2 Q_{\eta,\mu}\left(x,y,\{\zeta_i\}_{i=1}^N\right)\theta -  \nabla_{xy}^2 q_{\eta,\mu}(x,y)\theta  \right\|^2 \\&\leq   \frac{1}{N}\EE_{u,v,\zeta \mid \theta } \left\|\tilde \nabla_{xy}^2 Q_{\eta,\mu}\left(x,y,\zeta\right)\theta  \right\|^2\label{1OverN}
		\end{align}
		where 
		\begin{align}\notag 
			\tilde \nabla_{xy}^2 Q_{\eta,\mu}(x,y,\zeta)  =  u v^\top  \frac{Q(x+\eta u, y+\mu v,\zeta) + Q(x-\eta u,y-\mu v,\zeta)-2Q(x,y,\zeta )}{\eta\mu}.
		\end{align}
		Applying Proposition \ref{hessMomentBounds} we get
		\begin{align*}\notag 
			\EE_{u,v|\zeta, \theta}\Big\|  \tilde \nabla_{xy}^2 Q_{\eta,\mu}(x,y) \theta  \Big\|^2  &\leq  ~8L_{2,Q}^2\left[ \frac{\eta^4}{\mu^2}(n+8)^4 + \frac{2\mu^4}{\eta^2} n(m+12)^3\right]\|\theta\|^2\\ \notag &~~~~+ \bigg( \frac{6\eta^2}{\mu^2}(n+4)(n+2)\|\nabla^2_{xx} Q(x,y,\zeta)\|_F^2 \\&~~~~~~~~~~+ 36(n+2)\|\nabla^2_{xy}Q(x,y,\zeta)\|_F^2 \\& ~~~~~~~~~~+ \frac{30\mu^2}{\eta^2}n(m+2)\|\nabla^2_{yy}Q(x,y,\zeta )\|_F^2\bigg)\|\theta\|^2.
		\end{align*}
		Taking an expectation of both sides with respect to $\zeta$ we get: 
		\begin{align*}\notag 
			\EE_{u,v,\zeta| \theta}\Big\|  \tilde \nabla_{xy}^2 Q_{\eta,\mu}(x,y) \theta  \Big\|^2  &\leq  ~8L_{2,Q}^2\left[ \frac{\eta^4}{\mu^2}(n+8)^4 + \frac{2\mu^4}{\eta^2} n(m+12)^3\right]\|\theta\|^2\\ \notag &~~~~+ \bigg( \frac{6\eta^2}{\mu^2}(n+4)(n+2)\left(\sigma_{2,Q}^2 + \|\nabla^2_{xx} q(x,y)\|_F^2\right) \\&~~~~~~~~~+ 36(n+2)\left(\sigma_{2,Q}^2 + \|\nabla^2_{xy}q(x,y)\|_F^2\right) \\& ~~~~~~~~~+ \frac{30\mu^2}{\eta^2}n(m+2)\left(\sigma_{2,Q}^2 + \|\nabla^2_{yy}q(x,y )\|_F^2\right)\bigg)\|\theta\|^2,
		\end{align*}
		combining which with \eqref{1OverN} completes the proof. 
\end{proof}

\section{Section \ref{near_opt} Auxiliary Results}
\subsection{Proof of Lemma \ref{sgd:hessInv}}
\label{sec:2proof1}
\begin{proof}
		By \cite[Theorem 2]{AghaGhad25} and setting $\gamma={\cal O}(\epsilon/V_H)$, we have 
		\beq\label{proof_SG0}
		\|\EE [z_T] - \bar z\|^2 \le \EE \| z_T - \bar z\|^2  \leq (1-\gamma \lambda_g)^T\|z_0 - \bar z\|^2 + {\cal O} \left(\frac{\epsilon \bar V}{V_H}\right),
		\eeq
		where
		\[
		V_H={\cal O}\left(\mu_2^2 m( m+n)^3 + m(m+n)^2 \right), \bar V = {\cal O} \left(V_H+\mu_1^2 m^3+m \right).
		\]
		As long as $\mu_2 \le 1/\sqrt{m+n}$, we have $V_H={\cal O}(m(m+n)^2)$ and so $\gamma = {\cal O}(\epsilon/m(m+n)^2)$. Hence, we get 
		$\bar V/V_H = {\cal O}(1)$ when $\mu_1 \le 1/m$. By choosing $T = \frac{1}{\gamma}\log(1/\epsilon)$, we also get $(1-\gamma \lambda_g)^T\|z_0 - \bar z\|^2 ={\cal O}(\epsilon)$ which together with \eqnok{proof_SG0} imply the result.
		
	\end{proof}

\subsection{Proof of Lemma \ref{lemma_sgd}}\label{sec:2proof2}
	\begin{proof}
		By \cite[Lemma 4]{AghaGhad25}, we have 
		\begin{align*}
			\EE &\|y_{t_k} - y_{\eta_2,\mu_2}^*(x_k) \|^2  \le 8 \epsilon_k \left[\|y_0-y^*(x_k)\|^2 + 8(m+4)\sigma_{1,G}^2 \right]\nn \\
			&  + 4\mu_2^2 \left[8\epsilon_k (L_{1,G}^2+L_{1,g}^2)(m+4)^4 + \frac{ L_{1,g}}{\lambda_g } \left((3+4\epsilon_k)m+ \frac{\eta_2^2n}{\mu_2^2}\right)\right]
		\end{align*}
		when   
		\[
		\beta_t = \beta(\epsilon_k) =  \min\left\{\frac{1}{8(m+4) L_{1,G}}, \epsilon_k\lambda_g, \frac{1}{\lambda_g}\right\}, \qquad t_k \ge  \left\lceil \frac{1}{\lambda_g \beta(\epsilon_k)}\log\frac{1}{\epsilon_k}\right\rceil \qquad  \forall t
		\]
		for some $\epsilon_k >0$. The result then immediately follows by choosing $\epsilon_k = \epsilon/m$ and according to the choice of parameters in \eqnok{alpha_beta_st}.
		
	\end{proof}
	
\section{Section \ref{full_opt} Auxiliary Results}
\subsection{Proof of Lemma \ref{lemma:yz:diffs} }\label{sec:3proof1}
\begin{proof}
    The first inequality comes from applying Lemma 3.2 in \cite{kwon2023fully}. For the second relation, we note that
    \begin{align*}
        \frac{1}{\lambda} \nabla_y f(x, y^*(x)) + \nabla_y g(x, y^*(x)) = 0, \\
        \nabla_y g(x, z^*(x)) = 0.
    \end{align*}
    Arranging the two equations, we have
    \begin{align*}
        &\nabla_2 g(x, y^*(x)) - \nabla_y g(x, z^*(x)) \\
        &= \nabla_{yy}^2 g(x, z^*(x)) (y^*(x) -z^* (x) ) + \mathcal{O} \left( L_{2,g} \| y^*(x) -z^* (x) \|^2 \right) \\
        &= \nabla_{yy}^2 g(x, z^*(x)) (y^*(x) -z^* (x) ) + \mathcal{O} \left( \frac{L_{2,g} L_{0,f}^2}{\lambda_g^2 \lambda^2}  \right) \\
        &= -\frac{1}{\lambda} \nabla_y f(x, y^*(x)).
    \end{align*}
    With $\nabla_{yy}^2 g(x, z^*(x)) \succeq \lambda_g I$, we have the lemma.
\end{proof}

\subsection{Proof of Lemma \ref{lemma:zeroth_first_order_approx_bias}}\label{sec:3proof2}
\begin{proof}
    The first part comes from \cite[Proposition 6]{AghaGhad25}. For the second part, we first state a variant of Lemma~\ref{lemma:yz:diffs} with Gaussian-smoothing whose proof is similar to that of Lemma~\ref{lemma:yz:diffs} and hence, we skip it.
    \begin{lemma} \label{lemma:yz_diff_mu}
        For any given $x$, we have
        \begin{align*}
            &\|y_\mu^*(x) - z_{\mu}^*(x)\| \le \frac{2 L_{0,f}}{\lambda_g \lambda}, \\
            &y^*_\mu(\bar{x}) - z^*_\mu(x) = -\frac{1}{\lambda} \nabla_{yy}^2 g_{0,\mu}(x,z_\mu^*(x))^{-1} \nabla_y f_{0,\mu}(x, y_\mu^*(x))  + \mathcal{O}\left( \frac{L_{2,g} L_{0,f}^2}{\lambda_g^3 \lambda^2} \right).
        \end{align*}
    \end{lemma}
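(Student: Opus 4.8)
The plan is to transcribe the proof of Lemma~\ref{lemma:yz:diffs} line by line, replacing $f(x,\cdot)$ and $g(x,\cdot)$ everywhere by their Gaussian-smoothed counterparts $f_{0,\mu}(x,\cdot)=\mathbb{E}_v[f(x,\cdot+\mu v)]$ and $g_{0,\mu}(x,\cdot)=\mathbb{E}_v[g(x,\cdot+\mu v)]$. The first thing I would record is that smoothing in the $y$-block preserves the quantitative constants that the argument uses: since $\nabla^2_{yy}g_{0,\mu}(x,y)=\mathbb{E}_v[\nabla^2_{yy}g(x,y+\mu v)]$ is an average of matrices $\succeq\lambda_g I$, the function $g_{0,\mu}(x,\cdot)$ is $\lambda_g$-strongly convex and its Hessian is still $L_{2,g}$-Lipschitz; likewise $\|\nabla_y f_{0,\mu}(x,y)\|=\|\mathbb{E}_v[\nabla_y f(x,y+\mu v)]\|\le L_{0,f}$ and $\nabla g_{0,\mu}$ remains $L_{1,g}$-Lipschitz (these are the standard properties of the smoothed function cited after \eqref{main_prob_zst}). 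In particular $\lambda^{-1}f_{0,\mu}(x,\cdot)+g_{0,\mu}(x,\cdot)$ is strongly convex for $\lambda\ge 4L_{1,f}/\lambda_g$, so $y_\mu^*(x)$ and $z_\mu^*(x)$ are well defined and unique, and \cite[Lemma~3.2]{kwon2023fully} applies verbatim to the pair $(f_{0,\mu},g_{0,\mu})$, yielding the first inequality $\|y_\mu^*(x)-z_\mu^*(x)\|\le 2L_{0,f}/(\lambda_g\lambda)$.

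For the second relation I would start from the first-order optimality conditions of the two smoothed subproblems,
\[
\tfrac{1}{\lambda}\nabla_y f_{0,\mu}(x,y_\mu^*(x))+\nabla_y g_{0,\mu}(x,y_\mu^*(x))=0,\qquad \nabla_y g_{0,\mu}(x,z_\mu^*(x))=0,
\]
and Taylor-expand $\nabla_y g_{0,\mu}(x,\cdot)$ at $z_\mu^*(x)$ along $y_\mu^*(x)-z_\mu^*(x)$:
\[
\nabla_y g_{0,\mu}(x,y_\mu^*(x))=\nabla_y g_{0,\mu}(x,z_\mu^*(x))+\nabla^2_{yy}g_{0,\mu}(x,z_\mu^*(x))\bigl(y_\mu^*(x)-z_\mu^*(x)\bigr)+R,
\]
with $\|R\|\le \tfrac{L_{2,g}}{2}\|y_\mu^*(x)-z_\mu^*(x)\|^2=\mathcal{O}\!\left(\tfrac{L_{2,g}L_{0,f}^2}{\lambda_g^2\lambda^2}\right)$ by the first bound.

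Substituting the optimality conditions (the left side becomes $-\tfrac{1}{\lambda}\nabla_y f_{0,\mu}(x,y_\mu^*(x))$ and the first term on the right vanishes), then left-multiplying by $[\nabla^2_{yy}g_{0,\mu}(x,z_\mu^*(x))]^{-1}$, whose operator norm is at most $1/\lambda_g$ by strong convexity, isolates
\[
y_\mu^*(x)-z_\mu^*(x)=-\tfrac{1}{\lambda}\,[\nabla^2_{yy}g_{0,\mu}(x,z_\mu^*(x))]^{-1}\nabla_y f_{0,\mu}(x,y_\mu^*(x))+\mathcal{O}\!\left(\tfrac{L_{2,g}L_{0,f}^2}{\lambda_g^3\lambda^2}\right),
\]
which is the claimed identity.

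\textbf{Main obstacle.} The only step requiring care is the bookkeeping in the first paragraph: verifying that $(f_{0,\mu},g_{0,\mu})$ genuinely satisfies the same (dimension-free) constants in Assumption~\ref{fg_assumption} in the $y$-block as $(f,g)$, so that all quantities above are smoothing-parameter independent and the constants in the $\mathcal{O}(\cdot)$ terms do not secretly pick up factors of $m$. This is standard for Gaussian convolution — differentiation commutes with the expectation over $v$, so Hessian bounds, gradient bounds and Lipschitz constants all pass to the average — but it should be stated explicitly before reusing the argument of Lemma~\ref{lemma:yz:diffs}. Everything else is a direct transcription of that proof.
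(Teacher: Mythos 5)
Your proposal is correct and follows exactly the route the paper intends: the paper explicitly skips this proof, stating only that it is "similar to that of Lemma~\ref{lemma:yz:diffs}," and your line-by-line transcription — invoking \cite[Lemma~3.2]{kwon2023fully} for the smoothed pair and Taylor-expanding the smoothed optimality conditions — is precisely that argument. Your explicit verification that $(f_{0,\mu},g_{0,\mu})$ inherits the constants $\lambda_g$, $L_{0,f}$, $L_{1,g}$, $L_{2,g}$ from $(f,g)$ under Gaussian convolution is a detail the paper leaves implicit, and including it is a welcome addition rather than a deviation.
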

    We first decompose the inequality such that
    \begin{align*}
        &\| \nabla \mathcal{L}_{\eta,\mu}^*(x) - \nabla \mathcal{L}^*(x) \| \\
          &\le \|\nabla_x f_{\eta,0} (x,y^*_\mu(x)) - \nabla_x f (x,y^*(x)) \| \\
          & + \lambda \|(\nabla_x g(x,y^*(x)) -  \nabla_x g(x,z^*(x))) - (\nabla_x g_{\eta,0} (x,y_\mu^*(x)) - \nabla_x g_{\eta,0}(x,z_\mu^*(x))) \|  \\
        &\le \|\nabla_x f_{\eta,0} (x,y^*_\mu(x)) - \nabla_x f (x,y^*(x)) \| + \mathcal{O}(\lambda L_{2,g}) (\|y^*(x) - z^*(x)\|^2 + \|y_{\mu}^*(x) - z_{\mu}^*(x)\|^2) \\
        & + \lambda \| \nabla_{xy}^2 g(x,z^*(x)) (y^*(x) - z^*(x)) -  \nabla_{xy}^2 g(x,z_{\mu}^*(x)) (y_{\mu}^*(x) - z_{\mu}^*(x)) \| \\
        &\le \|\nabla_x f_{\eta,0} (x,y^*_\mu(x)) - \nabla_x f (x,y^*(x)) \| + \mathcal{O}\left( \frac{L_{2,g} L_{0,f}^2}{\lambda_g^2 \lambda} \right) \\
        &\quad + \lambda \cdot \|\nabla_{xy}^2 g_{\eta,0} (x,y_\mu^*(x)) - \nabla_{xy}^2 g(x,y_\mu^*(x))\| \cdot \|y^*_\mu(x)-z_\mu^*(x)\| \\ 
        &\quad + L_{1,g} \cdot \| \nabla_{yy}^2 g(x, z^*(x))^{-1} \nabla_y f(x,y^*(x)) - \nabla_{yy}^2 g_{0,\mu}(x, z_\mu^*(x))^{-1} \nabla_y f_{0,\mu}(x,y_\mu^*(x))\|,
    \end{align*}
    where the last step applied Lemma \ref{lemma:yz_diff_mu}. To bound the first term,
    \begin{align*}
        &\|\nabla_x f_{\eta,0} (x,y^*_\mu(x)) - \nabla_x f (x,y^*(x)) \| \\
        &\le \|\nabla_x f_{\eta,0} (x,y^*(x)) - \nabla_x f (x,y^*(x)) \| + L_{1,f} \|y_\mu^*(x) - y^*(x)\| \\
        &\le \mathcal{O}\left(L_{1,f} \eta n^{3/2} + L_{1,f} \mu \sqrt{\frac{m L_{1,g}}{\lambda_g}}\right). 
    \end{align*}
    For the remaining terms, we require the following lemma:
    \begin{lemma}
        For any $x \in X$ and $y \in \mathbb{R}^{m}$, we have
        \begin{align*}
            \|\nabla_{xy}^2 g_{\eta,0} (x,y) - \nabla_{xy}^2 g(x,y)\| &\le \mathcal{O} (L_{2,g} \eta n^{3/2}), \\
            \|\nabla_{yy}^2 g_{0,\mu} (x,y) - \nabla_{yy}^2 g(x,y)\| &\le \mathcal{O} (L_{2,g} \mu m^{3/2}). 
        \end{align*}
        \label{lemma:second_deriv_approx_error}
    \end{lemma}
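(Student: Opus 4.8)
The plan is to use the fact that Gaussian smoothing commutes with differentiation, which reduces both inequalities to a one-line application of the Lipschitz continuity of $\nabla^2 g$ (Assumption~\ref{fg_assumption}(b)) together with a bound on the first moment of the norm of a Gaussian vector.

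First I would establish that $\nabla_{xy}^2 g_{\eta,0}(x,y) = \EE_{u\sim\mathcal{N}(0,I_n)}[\nabla_{xy}^2 g(x+\eta u,y)]$ and $\nabla_{yy}^2 g_{0,\mu}(x,y) = \EE_{v\sim\mathcal{N}(0,I_m)}[\nabla_{yy}^2 g(x,y+\mu v)]$. By definition $g_{\eta,0}(x,y) = \EE_u[g(x+\eta u,y)]$, and since $\nabla^2 g$ is $L_{2,g}$-Lipschitz we have $\|\nabla^2 g(x+\eta u,y)\| \le \|\nabla^2 g(x,y)\| + L_{2,g}\eta\|u\|$, so the integrand together with its first two partial derivatives in $(x,y)$ is dominated by a function growing at most linearly in $\|u\|$, hence Gaussian-integrable; dominated convergence then licenses differentiating twice under the expectation, and likewise for $g_{0,\mu}$.

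With this in hand, Jensen's inequality, the Lipschitz bound $\|\nabla_{xy}^2 g(x+\eta u,y) - \nabla_{xy}^2 g(x,y)\| \le L_{2,g}\eta\|u\|$, and $\EE\|u\| \le (\EE\|u\|^2)^{1/2} = \sqrt n$ give
\begin{align*}
\|\nabla_{xy}^2 g_{\eta,0}(x,y) - \nabla_{xy}^2 g(x,y)\| &\le \EE_u\|\nabla_{xy}^2 g(x+\eta u,y) - \nabla_{xy}^2 g(x,y)\| \\
&\le L_{2,g}\eta\sqrt n,
\end{align*}
which is $\mathcal{O}(L_{2,g}\eta n^{3/2})$ (in fact the sharper $\sqrt n$ scaling holds); the second inequality follows identically with $(n,\eta,u)$ replaced by $(m,\mu,v)$. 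The only point requiring care is the interchange of differentiation and expectation in the first step, but this is routine given the Lipschitz-Hessian assumption and the finiteness of all Gaussian moments, so I do not anticipate a genuine obstacle here.
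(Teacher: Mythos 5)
Your proposal is correct, and it takes a genuinely different (and in fact sharper) route than the paper. The paper starts from the Gaussian finite-difference representation $\nabla_x g_{\eta,0}(x,y)=\EE\big[\eta^{-1}(g(x+\eta u,y)-g(x,y))\,u\big]$, differentiates in $y$ under the expectation, and Taylor-expands $\nabla_y g(x+\eta u,y)$ about $x$; the identity $\EE[uu^\top]=I$ recovers $\nabla_{yx}^2 g(x,y)$ and the second-order remainder is controlled by $L_{2,g}\eta\,\EE\|u\|^3=\mathcal{O}(L_{2,g}\eta n^{3/2})$, which is where the $n^{3/2}$ in the statement comes from. You instead commute the convolution with both derivatives to write $\nabla_{xy}^2 g_{\eta,0}(x,y)=\EE_u[\nabla_{xy}^2 g(x+\eta u,y)]$ (your domination argument justifying the interchange is adequate, given the Lipschitz Hessian and finiteness of Gaussian moments), and then a single application of Jensen plus the $L_{2,g}$-Lipschitz continuity of the Hessian yields $L_{2,g}\eta\,\EE\|u\|\le L_{2,g}\eta\sqrt n$. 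Your bound is strictly stronger ($\sqrt n$ versus $n^{3/2}$), trivially implies the stated $\mathcal{O}(L_{2,g}\eta n^{3/2})$, and is more elementary; the paper's route has the minor virtue of working directly with the same finite-difference representation that underlies the algorithm's estimators, but since both representations have the same expectation this is only a matter of presentation. Nothing downstream changes, as the lemma is only invoked at the $\mathcal{O}(\eta n^{3/2})$ level.
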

    The proof of Lemma \ref{lemma:second_deriv_approx_error} is deferred to the end. Using this lemma, note that
    \begin{align*}
        &\| \nabla_{yy}^2 g(x, z^*(x))^{-1} \nabla_y f(x,y^*(x)) - \nabla_{yy}^2 g_{0,\mu}(x, z_\mu^*(x))^{-1} \nabla_y f_{0,\mu}(x,y_\mu^*(x))\| \\
        &\le \|\nabla_{yy}^2 g(x, z^*(x))^{-1}\| \|\nabla_y f_{0,\mu} (x,y^*_\mu(x)) - \nabla_y f (x,y^*(x)) \| \\
        &\quad + \|\nabla_{yy}^2 g(x, z^*(x))^{-1}\| \|\nabla_{yy}^2 g(x, z^*(x)) - \nabla_{yy}^2 g_{0,\mu}(x, z_\mu^*(x))\| \|\nabla_{yy}^2 g_{0,\mu}(x, z_\mu^*(x))^{-1}\| \|\nabla_y f (x,y^*(x)) \| \\
        &\le \frac{1}{\lambda_g} \cdot \mathcal{O}\left(L_{1,f} \mu m^{3/2} + L_{1,f} \mu \sqrt{\frac{m L_{1,g}}{\lambda_g}}\right) + \frac{L_{0,f}}{\lambda_g^2} \cdot \mathcal{O}\left( L_{2,g} \mu m^{3/2} + L_{2,g} \mu \sqrt{\frac{m L_{1,g}}{\lambda_g}} \right),
    \end{align*}
    where we used
    \begin{align*}
        &\|\nabla_{yy}^2 g_{0,\mu} (x,z^*(x)) - \nabla_{yy}^2 g(x,z_\mu^*(x))\| \\
        &\le \|\nabla_{yy}^2 g_{0,\mu} (x,z_\mu^*(x)) - \nabla_{yy}^2 g(x,z_\mu^*(x))\| + \|\nabla_{yy}^2 g (x,z^*(x)) - \nabla_{yy}^2 g(x,z_\mu^*(x))\| \\
        &\le \mathcal{O} \left(L_{2,g} \mu m^{3/2} + L_{2,g} \mu  \sqrt{ \frac{m L_{1,g}}{\lambda_g}} \right). 
    \end{align*}
    Combining all, we get
    \begin{align*}
        &\| \nabla \mathcal{L}_{\eta,\mu}^*(x) - \nabla \mathcal{L}^*(x) \| \\
        &\le \mathcal{O} \left(\frac{L_{1,g} L_{1,f} }{\lambda_g}(\eta n^{3/2} + \mu m^{3/2} + \mu (m L_{1,g}/\lambda_g)^{1/2} ) \right) \\
        &\quad + \mathcal{O} \left( \frac{L_{0,f} L_{1,g} L_{2,g}  }{\lambda_g^2} (\mu m^{3/2} + \mu (m L_{1,g}/\lambda_g)^{1/2}) \right) \\
        &\quad + \mathcal{O} \left(\frac{L_{0,f} L_{2,g} \eta n^{3/2}}{\lambda_g}\right) + \mathcal{O}\left( \frac{L_{0,f}^2 L_{2,g}}{\lambda_g^2 \lambda} \right).
    \end{align*}

\paragraph{Proof of Lemma \ref{lemma:second_deriv_approx_error}} 
We first note that
\begin{align*}
    \nabla_y (\nabla_x g_{\eta,0}(x,y)) &= \nabla_y \left(\EE[\tilde \nabla_x G(x,y; \zeta)] \right) = \nabla_y \left(\EE \left[\frac{G(x+\eta u, y;\zeta) - G(x, y; \zeta)}{\eta} u \right]\right) \\
    &= \nabla_y \left(\EE \left[\frac{g(x+\eta u, y) - g(x, y)}{\eta} u \right]\right) \\& =  \EE \left[\frac{\nabla_y g(x+\eta u, y) - \nabla_y g(x, y)}{\eta} u^\top \right], 
\end{align*}
where the last equality follows from interchangebility of derivative and expectation since $g$ is smooth and expectation is taken over Gaussian distribution~\cite{folland1999real}.
Therefore, we get
\begin{align*}
    &\nabla_{yx}^2 g_{\eta,0} (x,y) - \nabla_{yx}^2 g(x,y) \\
    &= \EE \left[\frac{\nabla_y g(x+\eta u, y) - \nabla_y g(x, y) }{\eta} u^\top \right] - \nabla_{yx}^2 g(x,y) \\
    &= \EE \left[\nabla_{yx}^2 g(x, y) u u^\top \right] - \nabla_{yx}^2 g(x,y) + \mathcal{O}(L_{2,g} \eta \|u\|^{3})  \\
    &= \mathcal{O} (L_{2,g} \eta n^{3/2}).
\end{align*}
The second inequality for $\nabla^{2}_{yy}g$ can be obtained similarly.
\end{proof}
\subsection{Proof of Lemma \ref{lemma:inexact_first_bias}}\label{sec:3proof3}
\begin{proof}
    For simplicity, we omit the subscript $k$ here whenever the context is clear, and denote $y_\mu^*(x), z_\mu^*(x)$ by $y^*, z^*$, respectively. Note the following:
    \begin{align*}
        &\|\nabla_x f_\eta(x, y) - \nabla_x f_\eta(x,y^*)\| \le L_{1,f} \|y-y^*\|, \\
        & \nabla_x g_{\eta,0}(x,y^*) - \nabla_x g_{\eta,0}(x,z^*) = \nabla_{xy}^2 g_{\eta,0}(x,y^*) (y^*-z^*) + \mathcal{O}(L_{2,g} \|y^*-z^*\|^2), \\
        &\nabla_x g_{\eta,0}(x,y) - \nabla_x g_{\eta,0}(x,z) = \nabla_{xy}^2 g_{\eta,0}(x,y) (y-z) + \mathcal{O}(L_{2,g} \|y-z\|^2).
    \end{align*}
    Then again, we have
    \begin{align*}
        &\| \nabla_{xy}^2 g_{\eta,0}(x,y) (y-z) - \nabla_{xy}^2 g_{\eta,0}(x,y^*) (y^*-z^*)\|\\
        &\le \|\nabla_{xy}^2 g_{\eta,0}(x,y^*) (y-z - q^*)\| + \| (\nabla_{xy}^2 g_{\eta,0}(x,y) - \nabla_{xy}^2 g_{\eta,0}(x,y^*)) (y^*-z^*)\| \\
        &\le L_{1,g} \|q-q^*\| + L_{2,g} \|q^*\| \|y-y^*\|. 
    \end{align*}
    Combining all inequalities, we get the lemma.
\end{proof}

\subsection{Proof of Lemma \ref{lemma:control_y_z}}\label{sec:3proof4}
\begin{proof}
    We first show the inequality for $z_t$. For simplicity, we denote $x = x_k$ and $z^* = z_\mu^* (x)$. Note that
    \begin{align*}
        \EE_{|t} \|z_{t+1} - z^*\|^2 &\le \|z_t - z^*\|^2 - 2\beta \vdot{z_t - z^*}{\nabla_z g_{0,\mu}(z_t)} + \beta^2 \EE_{|t} \|\tilde{\nabla}_z G_{0,\mu} (x, z_t; \xi)\|^2 \\
        &\le (1-\beta\lambda_g) \|z_t - z^*\|^2 + \beta^2 \cdot \texttt{Var} \left( \tilde{\nabla}_z G_{0,\mu} (x, z_t; \xi) \right) \\
        &\le (1-\beta\lambda_g) \|z_t - z^*\|^2 + \beta^2 \cdot \mathcal{O} \left( L_{1,G}^2 \mu^2 m^3 + m \sigma_{1,G}^2 \right).
    \end{align*}
    The result for $y_t$ can be obtained similarly. As we set $\lambda \ge 4 L_{1,f} / \lambda_g$, $g + \lambda^{-1} f$ is $(3\lambda_g/4)$-strongly-convex, and thus, we have
    \begin{align*}
        \EE_{|t} &\|y_{t+1} - y^*\|^2\\
        &\le \|y_t - y^*\|^2 - 2\beta \vdot{y_t - y^*}{\nabla_y g_{0,\mu}(y_t)} \\&\quad + \beta^2 \EE_{|t} \|\lambda^{-1} \tilde{\nabla}_y F_{0,\mu} (x, y_t; \xi) + \tilde{\nabla}_y G_{0,\mu} (x, y_t; \xi)\|^2 \\
        &\le (1-\beta\lambda_g) \|z_t - z^*\|^2 + \beta^2 \cdot \texttt{Var} \left( \tilde{\nabla}_z G_{0,\mu} (x, z_t; \xi) \right) + \frac{\beta^2}{\lambda^2} \cdot \texttt{Var} \left( \tilde{\nabla}_y F_{0,\mu} (x, y_t; \xi) \right) \\
        &\le (1-\beta\lambda_g) \|z_t - z^*\|^2 + \beta^2 \cdot \mathcal{O} \left( \mu^2 L_{1,G}^2 m^3 + m \sigma_{1,G}^2 + \lambda^{-2} m \sigma_{1,F}^2 \right).
    \end{align*}
\end{proof}

\subsection{Proof of Lemma \ref{lemma:yz_diff_descent}}\label{sec:3proof5}
\begin{proof}
    By the strong-convexity of $g_{\mu}$, note that
    \begin{align*}
        \vdot{y_t-z_t}{\nabla_y g_{0,\mu} (x,y_t) - \nabla_y g_{0,\mu} (x,z_t)} \ge \lambda_g \|y_t - z_t\|^2,
    \end{align*}
    due to Proposition 1 in \cite{AghaGhad25}. By unfolding the update, we have
    \begin{align*}
        \EE_{|t} &\|y_{t+1} - z_{t+1}\|^2 \\
        &= \|y_t - z_t\|^2 + \beta^2 \cdot \EE_{|t} \|\lambda^{-1} \tilde{\nabla}_y F_{0,\mu}(x,y_t) + \tilde{\nabla}_y G_{0,\mu}(x,y_t) - \tilde{\nabla}_z G_{0,\mu}(x,z_t)\|^2 \\
        &\quad - 2 \beta \vdot{y_t - z_t}{\lambda^{-1} \nabla_y f_{0,\mu} (x, y_t) + \nabla_y g_{0,\mu} (x, y_t) - \nabla_y g_{0,\mu}(x,z_t)} \\
        &\le \|y_t - z_t\|^2 + \beta^2 \lambda^{-2} \cdot \EE_{|t} \|\tilde \nabla_y F_{0,\mu}(x,y_t)\|^2 \\&\quad + \beta^2 \cdot \EE_{|t} \|\tilde \nabla_y G_{0,\mu}(x,y_t) - \tilde \nabla_y G_{0,\mu}(x,z_t))\|^2 \\
        &\quad - 2 \beta \vdot{y_t - z_t}{\nabla_y g_{0,\mu} (x, y_t) - \nabla_y g_{0,\mu}(x,z_t)} - 2 \frac{\beta}{\lambda} \vdot{y_t-z_t}{\nabla_y f_{0,\mu}(x,y_t)} \\
        &\le (1 - \lambda_g \beta) \|y_t - z_t\|^2 + \mathcal{O} \left( \frac{\beta L_{0,f}^2}{\lambda^2 \lambda_g} \right) \\&\quad + \beta^2 \cdot \mathcal{O} \left(\frac{m \sigma_{1,F}^2}{\lambda^2}  + L_{1,G}^2 \mu^2 m^3 + L_{1,G}^2 m \|y_t - z_t\|^2 \right),
    \end{align*} 
    where we note that
    \begin{align*}
        \EE_{|t} \|\tilde \nabla_y F_{0,\mu}(x,y_t)\|^2 &\le L_{0,f}^2 + m\sigma_{1,F}^2 + L_{1,F}^2 \mu^2 m^3,
    \end{align*}
    and following the same proof steps to Lemma \ref{lemma:G_diff_var}, we have
    \begin{align*}
        &\EE_{|t} \|\tilde \nabla_y G_{0,\mu}(x,y_t) - \tilde \nabla_y G_{0,\mu}(x,z_t))\|^2 \\
        &\le \|\nabla_y g_{0,\mu} (x,y_t) - \nabla_z g_{0,\mu}(x,z_t)\|^2 + \texttt{Var} (\tilde \nabla_y G_{0,\mu}(x,y_t) - \tilde \nabla_y G_{0,\mu}(x,z_t))) \\
        &\le L_{1,g}^2 \|y_t - z_t\|^2 + \mathcal{O} (\mu^2 L_{1,G}^2 m^3 + m L_{1,G}^2 \|y_t - z_t\|^2).
    \end{align*}
    
    By choosing $\beta \ll \frac{\lambda_g}{L_{1,G}^2 m}$, we have
    \begin{align*}
        \EE_{|t} \|y_{t+1} - z_{t+1}\|^2 &\le \left(1 - \frac{\lambda_g \beta}{2}\right) \|y_t - z_t\|^2 + \mathcal{O}\left(\frac{\beta L_{f,0}^2}{\lambda^2 \lambda_g} + \frac{\beta^2 m \sigma_F^2}{\lambda^2} + \beta^2 \mu^2 m^3 L_{1,G}^2 \right).
    \end{align*}
\end{proof}

\subsection{Proof of Lemma \ref{descentLemmaFourthMoment}}\label{sec:3proof6}
\begin{proof}
    As $\|u+v+w\|^4 \le 27 (\|u\|^4 + \|v\|^4 + \|w\|^4)$, we start with
    \begin{align}\notag 
         \EE  \| \tilde \nabla_y G_{\mu}^y - \tilde \nabla_y G_{\mu}^{z} \|^4  \leq ~& \frac{27}{\mu^4}\EE\left\|   \left( G(x, y + \mu v, \zeta) - G(x, y,\zeta) - \mu \vdot{\nabla_y G(x, y, \zeta)}{v}  \right) v \right \|^4  \\ \notag & + \frac{27}{\mu^4}\EE\left\|   \left( G(x, z + \mu v, \zeta) - G(x, z,\zeta) - \mu \vdot{\nabla_y G(x, z, \zeta)}{v}  \right) v \right \|^4  \\ ~& + 27 \EE \left\| \left\langle \nabla_y G(x, y, \zeta) - \nabla_y G(x, z, \zeta), v\right\rangle v\right\|^4,  \label{3:fourth_terms}
    \end{align}
    similarly to \eqref{3:terms}. Again, note that
    \begin{align*}
        \left| G(x , y + \mu v, \zeta) - G(x,  y,\zeta) - \mu \left\langle \nabla_y G(x, y, \zeta), v \right\rangle  \right| \leq \frac{\mu^2 L_{1,G} }{2}\|v\|^2, 
    \end{align*}
    and therefore
    \begin{align*}
        \EE &\left\|   \left( G(x , y + \mu v, \zeta) - G(x, y,\zeta) - \eta \vdot{\nabla_x G(x, y, \zeta)}{u}  \right) v \right \|^4 \\
        &\le \frac{\mu^8 L_{G,1}^4}{16} \EE[\|v\|^{12}] \le \mathcal{O} \left(\mu^8 L_{1,G}^4 m^6 \right),
    \end{align*}
    and we can bound the second term similarly. For the third term, we have
    \begin{align*}
        \EE& \left\| \left\langle \nabla_y G(x, y, \zeta) - \nabla_y G(x, z, \zeta), v\right\rangle v\right\|^4 \\
        &\le \EE\left[ \left| \left\langle \nabla_y G(x, y, \zeta) - \nabla_y G(x, z, \zeta), v \right\rangle \right|^8 \right]^{1/2} \EE\left[ \|v\|^8 \right]^{1/2} \\
        &\le L_{1,G}^4 m^2 \| {y} - {z}\|^4. 
    \end{align*}
    This all implies that 
    \begin{align*}
        \EE[\|\tilde \nabla_y G_{0,\mu}^t - \tilde \nabla_z G_{0,\mu}^t\|^4] &\le \mathcal{O} \left( \mu^4 L_{1,G}^4 m^6 + L_{1,G}^4 m^2 \|y_t - z_t\|^4 \right).
    \end{align*}
    On the other hand, under Assumption~\ref{assumption:fourth_moment} and Lemma~\ref{lemma:4thMoment}, we have 
    
    \begin{align*}
        &\EE \|\tilde \nabla_y F_{0,\mu}(x,y;\zeta)\|^4 \\
        &\le 4\EE \left\|\frac{F(x,y+\mu v;\zeta) - F(x,y;\zeta) - \mu v^\top \nabla_y F(x,y;\zeta)}{\mu} v \right\|^4 + 4 \EE \left\|vv^\top \nabla_y F(x,y;\zeta) \right\|^4 \\
        &\le \mathcal{O} \left(\mu^4 L_{1,F}^4 m^6  + m^2 (L_{f,0}^4 + \sigma_{1,F}^4) \right).
    \end{align*}

    To bound the fourth-order moment of differences between $y_t$ and $z_t$, we can start with 
    \begin{align*}
        \EE_{|t}\|y_{t+1} - z_{t+1}\|^4 &= \EE_{|t} (\|y_t - z_t\|^2 + \beta^2 \|\lambda^{-1} \tilde{\nabla}_y F_{0,\mu}^t + \tilde{\nabla}_y G_{0,\mu}^t - \tilde{\nabla}_z G_{0,\mu}^t \|^2 \\
        &\quad - 2\beta \vdot{y_t - z_t}{\lambda^{-1} \tilde{\nabla}_y F_{0,\mu}^t + \tilde{\nabla}_y G_{0,\mu}^t - \tilde{\nabla}_z G_{0,\mu}^t })^2 \\
        &\le \|y_t - z_t\|^4 + \beta^4 \cdot \mathcal{O} (\lambda^{-4} n^2 \sigma_{F}^4 + L_{g,2}^4 n^2 \|y_t-z_t\|^4 + \eta^4 L_{g,1}^4 n^6) \\ 
        &\quad + 4\beta^3\|y_t-z_t\|  \EE_{|t} [\|\lambda^{-1} \tilde{\nabla}_y F_{0,\mu}^t + \tilde{\nabla}_y G_{0,\mu}^t - \tilde{\nabla}_z G_{0,\mu}^t\|^3] \\
        &\quad + 6\beta^2 \|y_t-z_t\|^2 \EE_{|t} [\|\lambda^{-1} \tilde{\nabla}_y F_{0,\mu}^t + \tilde{\nabla}_y G_{0,\mu}^t - \tilde{\nabla}_z G_{0,\mu}^t\|^2] \\
        &\quad - 4\beta \|y_t-z_t\|^2 \vdot{y_t - z_t}{\nabla_y g_{0,\mu}(x,y_t) - \nabla_z g_{0,\mu}(x,z_t)} \\
        &\quad - 4\beta \|y_t-z_t\|^2 \vdot{y_t - z_t}{\lambda^{-1} \nabla_y f_{0,\mu}(x,y_t)}.
    \end{align*}
    Note that for all $a,b \ge 0$, by Young's inequality,
    \begin{align*}
        ab^3 &\le \frac{\lambda_g^3 a^4}{4} + \frac{3b^4}{4\lambda_g}, \\
        a^2b^2 &\le \frac{\lambda_g a^4}{2} + \frac{b^4}{2 \lambda_g} \\
        a^2 (ab) &\le \frac{\lambda_g a^4}{2} + \frac{b^4}{ 2\lambda_g^3}.
    \end{align*}
    Rearranging this, we get
    \begin{align*}
        \EE_{|t}\|y_{t+1} - z_{t+1}\|^4 &\le (1- 3\lambda_g \beta/2) \|y_t - z_t\|^4 \\
        &\quad + 
        \beta^3 \cdot \mathcal{O} \left( \frac{m^2 \sigma_{1,F}^4}{\lambda^4 \lambda_g} + \frac{L_{1,G}^4 m^2}{ \lambda_g} \|y_t-z_t\|^4 + \frac{\mu^4 L_{1,G}^4 m^6}{\lambda_g} \right) \\&\quad + \mathcal{O}\left(\frac{\beta L_{0,f}^4}{\lambda^4 \lambda_g^3}\right) \\
        &\le (1- \lambda_g \beta) \|y_t - z_t\|^4 + \mathcal{O}\left(\frac{\beta L_{0,f}^4}{\lambda^4 \lambda_g^3} \right) + \mathcal{O}(\beta^2),
    \end{align*}
    given that $\beta^2 m^2 L_{1,G}^4 \ll \lambda_g^2$.
\end{proof}

\end{document}